\newtheorem{theorem}{\textbf{Theorem}}
\newtheorem{example}{\textbf{Example}}
\newtheorem{remark}{\textbf{Remark}}
\newtheorem{lemma}{\textbf{Lemma}}
\newtheorem{corollary}{\textbf{Corollary}}
\newcommand{\norm}[1]{\lVert#1\rVert}
\newcommand{\Rbb}{\mathbb{R}}
\renewcommand{\l}{\ell}
\newcommand{\Identity}{1\!\!1}
\newcommand{\G}{{\mathcal{G}}}
\newcommand\ip[2]{\langle #1, #2\rangle}
\renewcommand{\L}{{\mathcal{L}}}
\begin{document}

\title{
Spectrum-Adapted Tight Graph Wavelet and Vertex-Frequency Frames  
}
\author{\IEEEauthorblockN{David I Shuman, Christoph Wiesmeyr, Nicki Holighaus, and Pierre Vandergheynst}
\thanks{This work was supported by FET-Open grant number 255931 UNLocX.}
\thanks{David I Shuman and Pierre Vandergheynst are with the Signal Processing Laboratory (LTS2), Ecole Polytechnique F{\'e}d{\'e}rale de Lausanne (EPFL), Institue of Electrical Engineering, CH-1015 Lausanne, Switzerland (email: david.shuman@epfl.ch, pierre.vandergheynst@epfl.ch). Christoph Wiesmeyr is with the Numerical Harmonic Analysis Group, Faculty of Mathematics, University of Vienna, Oskar-Morgenstern-Platz 1, 1090 Wien, Austria (email: christoph.wiesmeyr@univie.ac.at). Nicki Holighaus is with the Acoustics Research Institute, Austrian Academy of Sciences, Wohllebengasse 12Ð14, 1040 Vienna, Austria (email: nicki.holighaus@kfs.oeaw.ac.at).}
\thanks{The first author would like to thank Daniel Kressner for helpful discussions about methods to approximate the cumulative spectral density function.}  
}
\maketitle

\begin{abstract}
We consider the problem of designing spectral graph filters for the construction of dictionaries of atoms that can be used to efficiently represent signals residing on weighted graphs. While the filters used in previous spectral graph wavelet constructions are only adapted to the length of the spectrum, the filters proposed in this paper are adapted to the distribution of graph Laplacian eigenvalues, and therefore lead to atoms with better discriminatory power. Our approach is to first characterize a family of systems of uniformly translated kernels in the graph spectral domain that give rise to tight frames of atoms generated via generalized translation on the graph. We then warp the uniform translates with a function that approximates the cumulative spectral density function of the graph Laplacian eigenvalues. We use this approach to construct computationally efficient, spectrum-adapted, tight vertex-frequency and graph wavelet frames. We give numerous examples of the resulting spectrum-adapted graph filters, and also present an illustrative example of vertex-frequency analysis using the proposed construction.   
\end{abstract}

\section{Introduction}
One of the main focuses of the emerging field of graph signal processing (see \cite{shuman_SPM} for a recent overview) is to develop transforms that enable us to efficiently extract information from high-dimensional data residing on the vertices of weighted graphs. In particular, researchers are designing dictionaries of atoms adapted to the underlying graph data domain, and representing graph signals as linear combinations of those atoms in various signal processing tasks. 
To date, the major thrust has been to design dictionaries whose atoms are jointly localized in the vertex domain (the analogue of the time domain for signals on the real line) and the graph spectral domain (the analogue of the frequency domain). For example, classical wavelet and time-frequency dictionary designs have been generalized to the graph setting in different manners (see, e.g., \cite{Crovella2003, diffusion_wavelets, sgwt} for graph wavelet constructions and \cite{shuman_SSP_2012,shuman_ACHA_2013} for a generalization of windowed Fourier frames). 

The dictionaries we consider in this paper are comprised of atoms constructed by translating smooth graph spectral filters to be centered at different vertices of the graph (a more precise mathematical definition of this class of dictionaries is included in Section \ref{Se:notation}). The translation is accomplished by multiplying each filter by a graph Laplacian eigenvector in the graph Fourier domain, and the smoothness of the graph spectral filters ensures that the atoms are localized around their center vertices. The first example of such a dictionary is 
the spectral graph wavelet frame of \cite{sgwt}, where the system of graph spectral filters consists of a single lowpass kernel and a sequence of dilated bandpass kernels. In \cite{leonardi_fmri,leonardi_multislice}, Leonardi and Van De Ville introduce Meyer-like wavelet and scaling kernels that lead to tight spectral graph wavelet frames. 
As outlined in \cite[Section 6.7.2]{shuman_ACHA_2013}, 
benefits of a tight frame include increased numerical stability when reconstructing a signal from noisy coefficients \cite{christensen,kovacevic_frames1,kovacevic_frames2}, faster computations (e.g., when computing proximity operators in convex regularization problems \cite{combettes_chapter}), and the ability to interpret the spectrogram of a generalized time-frequency frame as an energy density function.

While the structure of the graph under consideration is incorporated into the spectral graph wavelets of \cite{sgwt} via the graph Laplacian, the spectral graph wavelet and scaling kernels are only adapted to the maximum graph Laplacian eigenvalue, and not to the specific graph Laplacian spectrum.\footnote{The tight graph wavelet kernels of \cite{leonardi_multislice} are also adapted to the maximum degree of the graph. We discuss the relation between those kernels and the proposed kernels in more detail in Section \ref{Se:adapted}, Remark \ref{Re:tight_comparison}.} As a result, for graphs with irregularly spaced graph Laplacian eigenvalues, many spectral graph wavelets may be highly correlated with the wavelets centered at nearby vertices and scales, and therefore, their coefficients may not provide as much discriminatory power when analyzing graph signals. We provide more detailed examples of this phenomenon in Sections \ref{Se:adapted} and \ref{Se:wavelets}. 

On the other hand, the windowed graph Fourier frames of \cite{shuman_SSP_2012,shuman_ACHA_2013} feature atoms that are adapted to the specific discrete graph Laplacian spectrum via a generalized modulation; however, the extra adaptation comes at the computational expense of having to compute a full eigendecomposition of the graph Laplacian, which does not scale well with the number of vertices and edges in the graph. A Chebyshev polynomial approximation method \cite[Section 6]{sgwt}, \cite{shuman_DCOSS_2011} enables the spectral graph wavelets (and other dictionaries belonging to the family considered in this paper) to be implemented without performing this full eigendecomposition.

References \cite{Zhang2012} and \cite{thanou_GlobalSIP_2013} use sets of graph signals as training data to learn 
dictionaries of atoms that are also constructed by translating smooth graph kernels to different vertices. 
Since the graph structure is incorporated into the learning process, the learned kernels are indirectly adapted to the underlying graph data domain (and its Laplacian spectrum) as well as the training data. 

In this paper, we propose a new method to design the sequence of spectral graph kernels so that (i) they are adapted to the entire graph Laplacian spectrum of the graph under consideration (but not to training data as in \cite{Zhang2012,thanou_GlobalSIP_2013}), and (ii) the dictionary resulting from translations of these kernels to all vertices on the graph is both a tight frame and has a fast implementation via the Chebyshev polynomial approximation method of \cite[Section 6]{sgwt}. Our main idea is to construct the spectral graph kernels as warped versions of uniformly translated kernels in the graph spectral domain, with the warping function approximating the cumulative spectral density function of the graph Laplacian in order to adapt the kernels to the entire spectrum. We use this approach to generate new vertex-frequency frames (generalizations of time-frequency analysis), as well as new graph wavelet frames. 

In addition to the primary contribution of adapting the graph spectral filters to the spectrum of the specific graph under consideration when generating dictionaries to represent graph signals, the paper contains two additional contributions that may find application outside the context of graph signal processing: (i) a new method to generate uniform translates of smooth windows whose squares sum to a constant function over either the entire real line or a finite interval (Section \ref{Se:uniform_translates}); and (ii) an implementation of a method to approximate the empirical spectral cumulative distribution of a large, sparse matrix based on classical spectrum slicing theory (Section \ref{Se:approx_spectrum}).

\section{Notation and Background} \label{Se:notation}

We generally follow the notation from \cite{shuman_SPM}, with the combinatorial and normalized graph Laplacians denoted by $\L$ and $\tilde{\L}$, respectively, and their eigenvalue and eigenvector pairs denoted by $\{(\lambda_{\l},u_{\l})\}_{\l=0,1,\ldots,N-1}$ and $\{(\tilde{\lambda}_{\l},\tilde{u}_{\l})\}_{\l=0,1,\ldots,N-1}$, where $N$ is the number of vertices in the graph. We denote the entire discrete graph Laplacian spectrum $\{\lambda_0=0, \lambda_1, \ldots, \lambda_{N-1}=\lambda_{\max}\}$ by $\sigma({\L})$. Given a graph signal $f_{in} \in \Rbb^N$ and a graph spectral filter (which we also refer to as a kernel) $\hat{g}: \sigma(\L) \rightarrow \Rbb$, graph spectral filtering is defined as multiplication in the graph Fourier domain:
\begin{align}\label{Eq:filtering}
\widehat{f_{out}}(\lambda_{\l}):=\widehat{f_{in}}(\lambda_{\l})\hat{g}(\lambda_{\l}),
\end{align}
where the graph Fourier transform is $\widehat{f_{in}}(\lambda_{\l}):=\ip{f_{in}}{u_{\l}}$.\footnote{Alternatively, the normalized graph Laplacian eigenvectors can be used as the Fourier basis, with $\tilde{\lambda}_{\l}$ replacing $\lambda_{\l}$ in \eqref{Eq:filtering}. We use the normalized graph Laplacian eigenvectors as the graph Fourier basis in Section \ref{Se:norm_ER}.} Note that the graph spectral filter $\hat{g}(\cdot)$ is often defined as a continuous function on the nonnegative real line and then restricted to $\sigma(\L)$. In the vertex domain (i.e., taking an inverse graph Fourier transform of \eqref{Eq:filtering}), graph spectral filtering reads as a generalized convolution \cite{shuman_SSP_2012}:
\begin{align*}
f_{out}=f_{in} \ast g = \sum_{\l=0}^{N-1} \widehat{f_{in}}(\lambda_{\l})\hat{g}(\lambda_{\l})u_{\l}.
\end{align*}

The dictionaries we consider in this paper are completely characterized by a sequence of graph spectral filters $\left\{\widehat{g_m}(\cdot)\right\}_{m=1,2,\ldots,M}$, and consist of all $M\cdot N$ atoms of the form
\begin{align}\label{Eq:atom_form}
g_{i,m}:=T_i g_m
=\sqrt{N}\delta_i \ast g_m = \sqrt{N} \widehat{g_m}(\L)\delta_i= \sqrt{N}\sum_{\l=0}^{N-1} \widehat{g_m}(\lambda_{\l})u_{\l}^*(i)u_{\l}.
\end{align}
In \eqref{Eq:atom_form}, $\delta_i$ is the Kronecker delta, 
and $T_i$ is a generalized translation operator that localizes each atom $g_{i,m}$ around its center vertex $i$. The spread of the atom $g_{i,m}$ around its center vertex $i$ is controlled by the smoothness of the filter $\widehat{g_m}(\cdot)$ \cite{sgwt,shuman_ACHA_2013}. The spectral graph wavelets \cite{sgwt} satisfy \eqref{Eq:atom_form}, with each bandpass kernel given by $\widehat{g_m}(\lambda_{\l})=\widehat{g}(t_m \lambda_{\l})$ for a fixed mother kernel $\widehat{g}(\cdot)$ and different dilation factors $t_m$ that are only adapted to the length of the spectrum. 

In the remainder of the paper, we suggest different methods to choose the sequence of graph spectral filters $\left\{\widehat{g_m}(\cdot)\right\}_{m=1,2,\ldots,M}$. We want the resulting dictionaries to form tight frames, so before proceeding, we present a sufficient condition on the spectral graph filters to ensure this property.
A proof of the following lemma is included in the Appendix.
\begin{lemma}[Slight generalization of Theorem 5.6 of \cite{sgwt}] \label{Le:frame}
Let ${\cal D}:=\left\{g_{i,m}\right\}_{i=1,2,\ldots,N;~m=1,2,\ldots,M}$ be a dictionary of atoms with $g_{i,m}:=T_i g_m$, and define
\begin{align*}
G(\lambda):=\sum_{m=1}^{M} \bigl[\hat{g}_m(\lambda)\bigr]^2.
\end{align*}
If $G(\lambda)>0$ for all $\lambda \in \sigma(\L)$, 
then  
for all $f \in \Rbb^N$,
\begin{align*}
A \norm{f}_2^2 \leq \sum_{i=1}^N \sum_{m=1}^{M} \left|\ip{f}{g_{i,m}}\right|^2 \leq B \norm{f}_2^2,
\end{align*}
\vspace{-1cm}

\noindent where 
\vspace{-.3cm}
\begin{align*}
A=N\cdot \min_{\lambda \in \sigma(\L)} G(\lambda)\hbox{~~and~~}B=N\cdot \max_{\lambda \in \sigma(\L)} G(\lambda).
\end{align*}
In particular, if $G(\lambda)$ is constant on $\sigma(\L)$, 
then ${\cal D}$ is a tight frame. 
\end{lemma}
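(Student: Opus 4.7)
The plan is to carry out a direct computation of $\sum_i \sum_m |\langle f, g_{i,m}\rangle|^2$ using the graph Fourier expansion of each atom given in \eqref{Eq:atom_form}, then reduce it to a weighted sum of squared Fourier coefficients of $f$ and bound that weighted sum by the extrema of $G$ on $\sigma(\L)$.

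First I would expand the inner product. Substituting the explicit form of $g_{i,m}$ from \eqref{Eq:atom_form} into $\langle f,g_{i,m}\rangle$ and using $\widehat{f}(\lambda_{\l}) = \langle f, u_{\l}\rangle$ gives
\begin{align*}
\langle f, g_{i,m}\rangle = \sqrt{N}\sum_{\l=0}^{N-1}\widehat{g_m}(\lambda_{\l})\,u_{\l}(i)\,\widehat{f}(\lambda_{\l}).
\end{align*}
Taking the squared modulus produces a double sum over indices $\l,\l'$. Summing over the vertex index $i$ from $1$ to $N$ then collapses the double sum via orthonormality of the eigenvectors, $\sum_{i=1}^N u_{\l}(i)u_{\l'}^*(i)=\delta_{\l,\l'}$, yielding
\begin{align*}
\sum_{i=1}^N \left|\langle f, g_{i,m}\rangle\right|^2 = N\sum_{\l=0}^{N-1}\bigl[\widehat{g_m}(\lambda_{\l})\bigr]^2\,|\widehat{f}(\lambda_{\l})|^2.
\end{align*}
Interchanging the order of summation and summing over $m$ from $1$ to $M$ then introduces exactly the quantity $G(\lambda_{\l})$, giving
\begin{align*}
\sum_{i=1}^N\sum_{m=1}^M |\langle f,g_{i,m}\rangle|^2 = N\sum_{\l=0}^{N-1} G(\lambda_{\l})\,|\widehat{f}(\lambda_{\l})|^2.
\end{align*}

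Finally, I would bound $G(\lambda_{\l})$ on $\sigma(\L)$ by its minimum and maximum (both finite and, by hypothesis, positive), pull the constants out of the sum, and invoke Parseval's identity $\sum_{\l=0}^{N-1} |\widehat{f}(\lambda_{\l})|^2 = \|f\|_2^2$ for the orthonormal graph Fourier basis. This immediately yields the frame bounds $A = N\cdot\min_{\lambda\in\sigma(\L)}G(\lambda)$ and $B = N\cdot\max_{\lambda\in\sigma(\L)}G(\lambda)$, and tightness when $G$ is constant on $\sigma(\L)$.

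There isn't really a difficult step; the argument is essentially a Parseval-style calculation. The one point that requires a little care is the bookkeeping around the double summation: one must be careful that the orthonormality collapse is applied to the vertex sum (over $i$) rather than the spectral sum (over $\l$), and that the factor $\sqrt{N}$ built into the definition of $T_i$ in \eqref{Eq:atom_form} squares up to produce the $N$ appearing in the frame bounds. The generalization over Theorem 5.6 of \cite{sgwt} is only that we allow a general positive $G$ rather than one bounded above and below by explicit constants, so the bounds are simply read off as the extrema of $G$ on the discrete set $\sigma(\L)$.
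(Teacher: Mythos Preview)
Your proposal is correct and follows essentially the same approach as the paper's proof: both expand the inner products via the atom formula, collapse the double spectral sum using the eigenvector orthonormality relation $\sum_{i=1}^N u_{\l}(i)u_{\l'}^*(i)=\delta_{\l,\l'}$ to arrive at $N\sum_{\l} G(\lambda_{\l})|\hat{f}(\lambda_{\l})|^2$, and then invoke Parseval. The only cosmetic difference is that the paper first passes to the Fourier domain via Parseval before expanding, whereas you expand directly in the vertex domain; the computations are otherwise line-for-line the same.
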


\section{Uniform Translates} \label{Se:uniform_translates}
Our objective in this section is to develop a method to generate a system of filters such that (i) the filters are translated versions of each other in the graph spectral domain, and (ii) the $M\cdot N$ dictionary atoms constructed by applying each generalized translation operator $T_i$ to each filter form a tight frame.
More precisely, given an upper bound, $\lambda_{\max}$, on the spectrum and a desired number of filters, $M$, we want to find a kernel $\widehat{g^{U}}(\cdot)$ and constants 
 $a$ and $A$ such that
\begin{align}\label{Eq:kernel_criterion}
G(\lambda)=\sum_{m=1}^{M} \bigl[\widehat{g^U}(\lambda-ma)\bigr]^2=A,~~\forall \lambda \in [0,\lambda_{\max}].
\end{align} 
The following theorem and corollaries, proofs of which are included in the appendix, show one method to construct a parametrized family of kernels satisfying \eqref{Eq:kernel_criterion}. 
\begin{theorem}   \label{Th:uniform_translates}
  Let $K\in {\mathbb N}$ and $a_k \in \Rbb$ for $k \in \{0,1,\ldots,K\}$, and define
  \begin{align}\label{Eq:window_form}
   q(t) := \sum_{k=0}^K a_k \cos\Biggl(2\pi k \Bigl(t-\frac{1}{2}\Bigr)\Biggr) \Identity_{\{0 \leq t < 1\}}.
  \end{align}
  Then for any $R \in {\mathbb N}$ satisfying $R > 2K$,
  \begin{equation*}
    \sum_{m\in {\mathbb Z}} \left|q\left(t-\frac{m}{R}\right)\right|^2 = R a_0^2 + \frac{R}{2} \sum_{k=1}^K a_k^2, ~~\forall t \in \Rbb;
  \end{equation*}
  i.e. the squares of a system of regular translates sum 
  up to a constant function.
\end{theorem}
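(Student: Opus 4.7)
The plan is to reduce the infinite sum to a finite sum by exploiting the compact support of $q$, and then resolve the resulting trigonometric identity with a finite geometric series.

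First, observe that $F(t):=\sum_{m\in\mathbb{Z}}|q(t-m/R)|^2$ is periodic with period $1/R$, because the substitution $m\mapsto m+1$ reindexes the sum. Hence it suffices to verify the claim for $t\in[0,1/R)$. For such $t$ the indicator $\Identity_{\{0\leq s<1\}}$ restricts the sum to exactly $R$ terms, namely $m\in\{0,-1,\ldots,-(R-1)\}$, giving
\begin{equation*}
F(t)=\sum_{j=0}^{R-1}\left(\sum_{k=0}^{K}a_k\cos\!\Bigl(2\pi k\bigl(t+\tfrac{j}{R}-\tfrac12\bigr)\Bigr)\right)^{2}.
\end{equation*}

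Next, I would expand the square using the product-to-sum identity
\begin{equation*}
\cos(\alpha)\cos(\beta)=\tfrac12\bigl[\cos(\alpha-\beta)+\cos(\alpha+\beta)\bigr],
\end{equation*}
which turns $q^2$ into a trigonometric polynomial whose modes $\cos(2\pi n(t-\tfrac12))$ have frequencies $n=j\pm k$ with $0\leq j,k\leq K$. In particular every occurring frequency satisfies $|n|\leq 2K$. Swapping the order of summation, the inner sum over $j$ becomes
\begin{equation*}
\sum_{j=0}^{R-1}\cos\!\Bigl(2\pi n\bigl(t+\tfrac{j}{R}-\tfrac12\bigr)\Bigr)=\operatorname{Re}\!\left[e^{2\pi i n(t-1/2)}\sum_{j=0}^{R-1}e^{2\pi i n j/R}\right].
\end{equation*}
The geometric sum equals $R$ if $R\mid n$ and vanishes otherwise. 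The hypothesis $R>2K$ now plays its decisive role: since $|n|\leq 2K<R$, the only way to have $R\mid n$ is $n=0$. Consequently every mode with $n\neq 0$ disappears.

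Finally I would identify the constant term. The frequency $n=0$ arises from pairs with $j+k=0$ (contributing only $j=k=0$, giving coefficient $\tfrac{a_0^2}{2}$) and from pairs with $j=k$ (contributing $\tfrac12\sum_{k=0}^{K}a_k^2$). Adding these and multiplying by $R$ from the geometric sum yields
\begin{equation*}
F(t)=R\!\left(\tfrac{a_0^2}{2}+\tfrac12\sum_{k=0}^{K}a_k^2\right)=Ra_0^2+\tfrac{R}{2}\sum_{k=1}^{K}a_k^2,
\end{equation*}
independent of $t$. Combined with the periodicity, this establishes the identity on all of $\mathbb{R}$. The proof is essentially bookkeeping; the only subtle point is making sure the cross-mode frequencies stay in $\{-2K,\ldots,2K\}$ so that the gap hypothesis $R>2K$ can kill them via the geometric sum, and this is where I would be most careful.
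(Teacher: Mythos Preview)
Your proof is correct and follows essentially the same route as the paper: reduce to $R$ consecutive translates, expand the product of cosines, and use the geometric sum $\sum_{j=0}^{R-1}e^{2\pi i n j/R}$ together with $|n|\le 2K<R$ to kill every nonzero mode. The paper carries out the same computation with complex exponentials directly rather than the product-to-sum identity, and it works at an arbitrary $t$ (indexing the $R$ surviving translates by $m=\lfloor Rt-(R-1)\rfloor,\ldots,\lfloor Rt\rfloor$) instead of first invoking the $1/R$-periodicity to reduce to $t\in[0,1/R)$; these are cosmetic differences. One small notational hiccup: you use $j$ both for the translate index and for one of the cosine-frequency indices in the final paragraph, which you would want to clean up in a final write-up.
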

\begin{corollary}\label{Co:undilated_translates}
Given a desired number of filters, $M$, 
let $R$ and $K$ be any integers satisfying $2<R\leq M$ and $K<\frac{R}{2}$, and 
define the 
kernel
\begin{align*}
\hat{h}(y):=
\sum_{k=0}^K a_k \cos\left(2\pi k \left(y-\frac{1}{2}\right)\right) \Identity_{\left\{0 \leq y < 1 \right\}},
\end{align*}
where $\left\{a_k\right\}_{k=0,1,\ldots,K}$ is a real sequence of coefficients satisfying
\begin{align}\label{Eq:a_constraint}
\sum_{k=0}^K (-1)^k a_k = 0.
\end{align}
Then
\begin{align}\label{Eq:co1_statement}
H(y)=\sum_{m=1-R}^{M-R}\left[\hat{h}\left(y-\frac{m}{R}\right)\right]^2=
Ra_0^2+\frac{R}{2}\sum_{k=1}^K a_k^2,
~~\forall y\in \left[0,\frac{M+1-R}{R}\right].
\end{align}
\end{corollary}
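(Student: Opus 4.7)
The plan is to derive Corollary \ref{Co:undilated_translates} directly from Theorem \ref{Th:uniform_translates} by (i) noting the kernel $\hat h$ is exactly the function $q$ of the theorem with shift parameter $R$, (ii) arguing that on the interval $[0, (M+1-R)/R]$ the full bi-infinite sum over $m \in \mathbb{Z}$ collapses to the finite range $m \in \{1-R,\ldots,M-R\}$, and (iii) handling the right boundary using the side condition \eqref{Eq:a_constraint}.

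First I would observe that $\hat h(y) = q(y)$ with $q$ as in \eqref{Eq:window_form} and that the hypothesis $K < R/2$ is exactly the condition $R > 2K$ required by Theorem \ref{Th:uniform_translates}. Hence the theorem (with $t := y$) gives, for \emph{every} $y \in \Rbb$,
\begin{align*}
\sum_{m \in \mathbb{Z}} \left[\hat h\!\left(y - \tfrac{m}{R}\right)\right]^2 = R a_0^2 + \tfrac{R}{2}\sum_{k=1}^K a_k^2 .
\end{align*}
The only work left is to show that for $y \in [0,(M+1-R)/R]$ the terms with $m \notin \{1-R,\ldots,M-R\}$ all vanish, so that $H(y)$ equals the full sum.

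Since $\hat h$ is supported in $[0,1)$, the term indexed by $m$ is nonzero only when $0 \leq y - m/R < 1$, i.e.\ $Ry - R < m \leq Ry$. For $y \geq 0$ this already forces $m > -R$, so $m \geq 1-R$; and for $y \leq (M+1-R)/R$ it forces $m \leq M+1-R$. Thus the only potentially nonzero terms outside the truncation range $\{1-R,\ldots,M-R\}$ are those with $m = M+1-R$. For such $m$, $y - m/R \in [-1/R, 0]$; the value is strictly negative (and therefore out of support) unless $y = (M+1-R)/R$, in which case $y - m/R = 0$ and the term equals $[\hat h(0)]^2$.

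This is the point where the hypothesis \eqref{Eq:a_constraint} is used: a direct evaluation gives
\begin{align*}
\hat h(0) = \sum_{k=0}^K a_k \cos(-\pi k) = \sum_{k=0}^K (-1)^k a_k = 0,
\end{align*}
so the boundary term vanishes as well. Combining these observations, $H(y)$ coincides with the bi-infinite sum on the entire interval $[0,(M+1-R)/R]$, yielding \eqref{Eq:co1_statement}. The only genuinely delicate step is this right-endpoint boundary check; everything else is bookkeeping on the support of $\hat h$.
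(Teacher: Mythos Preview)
Your proof is correct and follows essentially the same route as the paper's own argument: identify $\hat h$ with $q$, invoke Theorem~\ref{Th:uniform_translates} for the full bi-infinite sum, use the support of $\hat h$ to truncate to $m\in\{1-R,\ldots,M+1-R\}$, and eliminate the boundary term $m=M+1-R$ via $\hat h(0)=\sum_k(-1)^k a_k=0$. One cosmetic slip: for $m=M+1-R$ and $y\in[0,(M+1-R)/R]$ the argument $y-m/R$ actually ranges over $[-(M+1-R)/R,0]$, not $[-1/R,0]$, but since only the sign matters this does not affect the argument.
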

Note that condition \eqref{Eq:a_constraint} is equivalent to requiring the kernel $\hat{h}(\cdot)$ to be continuous.
In the examples in this paper, we always take the kernel to be a shifted Hann kernel, with $K=1$ and $a_0=a_1=\frac{1}{2}$, in which case the right-hand side of \eqref{Eq:co1_statement} is equal to $\frac{3R}{8}$, where $R$ is a parameter controlling the overlap of the shifted kernels. The Blackman window also satisfies \eqref{Eq:window_form} and \eqref{Eq:a_constraint}, with $K=2$, $a_0=0.42$, $a_1=0.5$, and $a_2=0.08$.
\begin{example} \label{Ex:uniform}
In Figure \ref{Fig:hann}, we show three different sets of translated Hann kernels, for 
different 
$R$ and $M$.
\begin{figure}[h]
\centering
\begin{minipage}[b]{.32\linewidth}
\centerline{$~~~~R=3,~M=3$}
\centerline{\includegraphics[width=\linewidth]{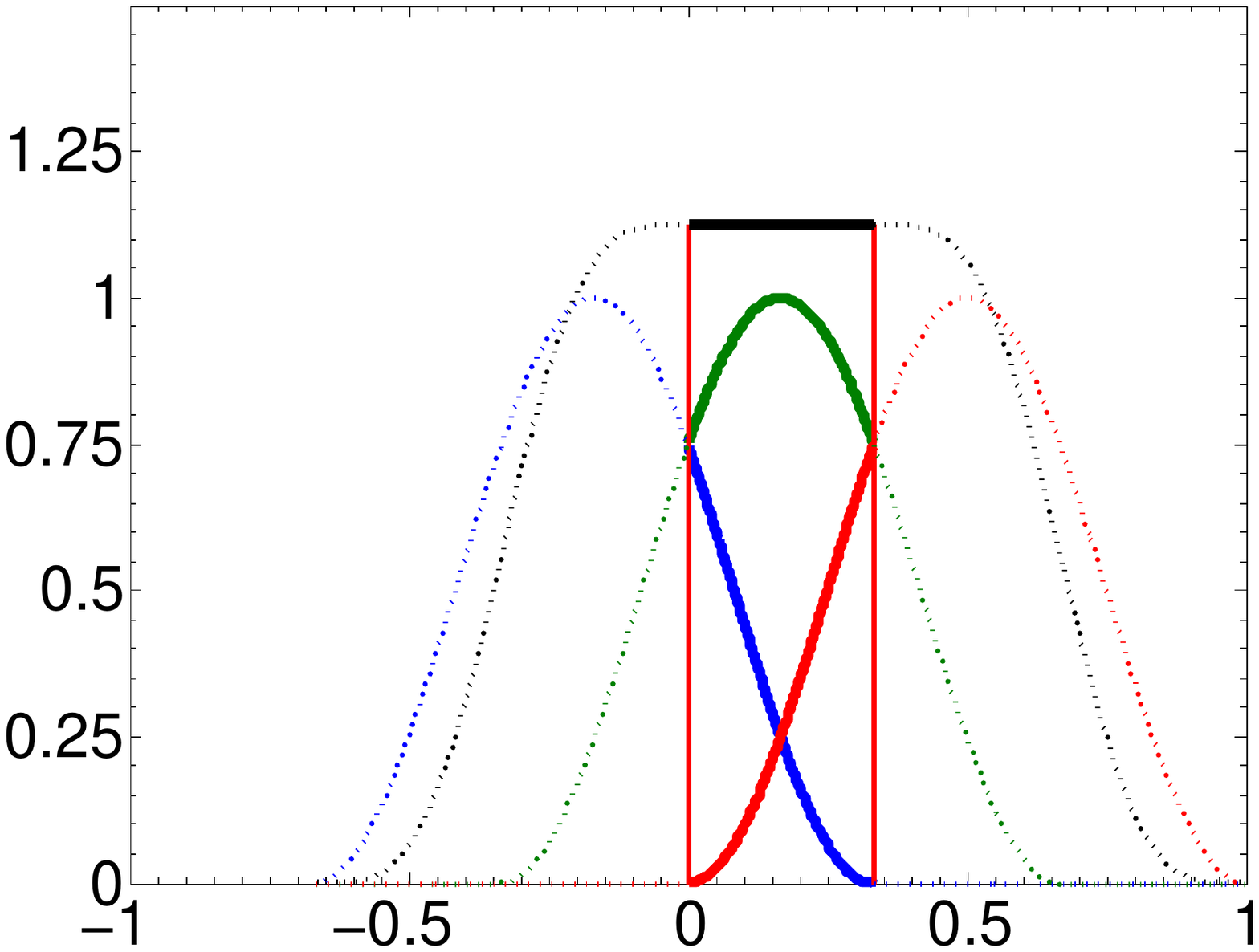}} 
\centerline{\small{~~~~$y$}}
\centerline{\small{~~~~(a)}}
\end{minipage}
\hfill
\begin{minipage}[b]{.32\linewidth}
\centerline{$~~~~R=3,~M=9$}
\centerline{\includegraphics[width=\linewidth]{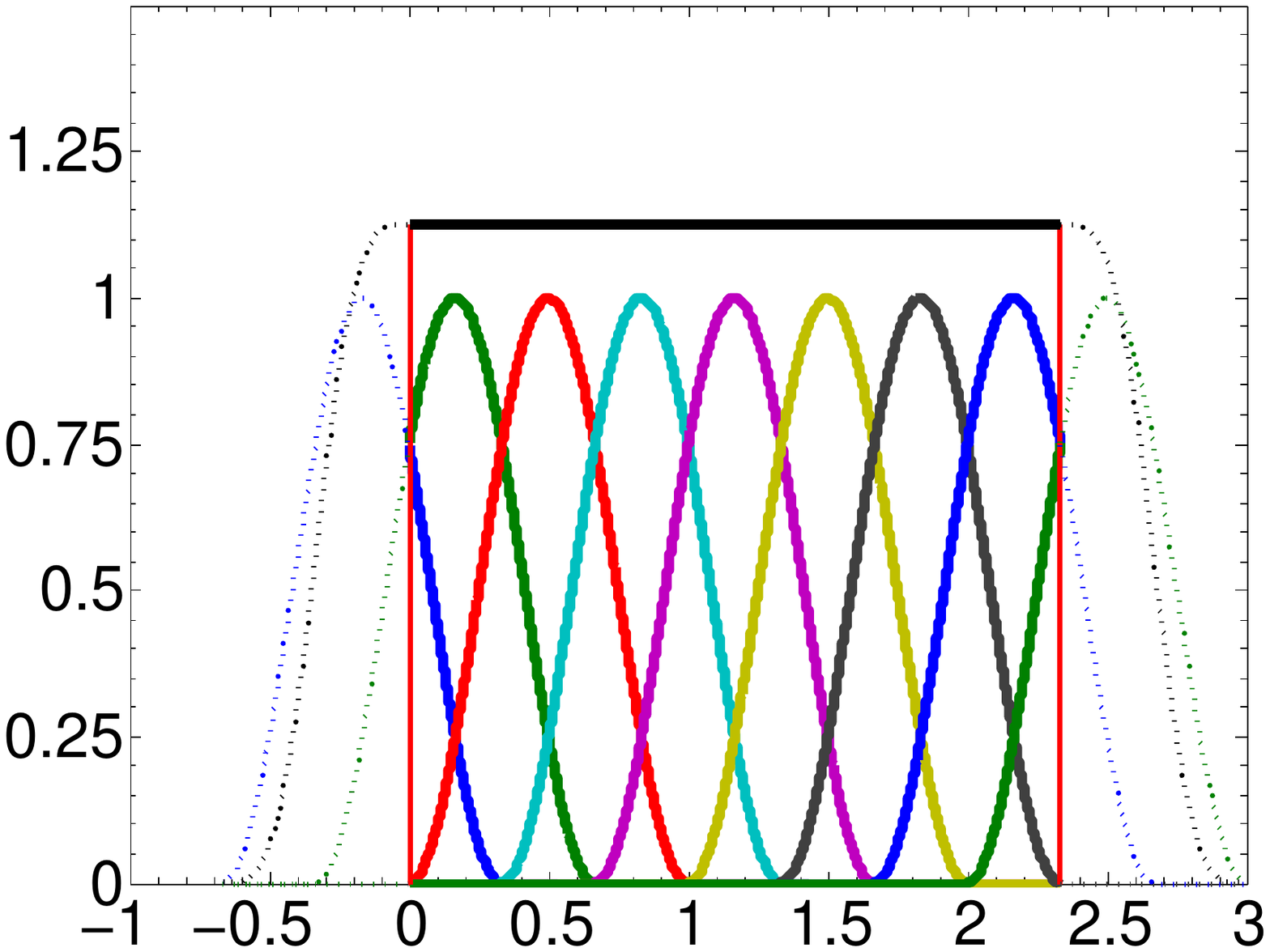}} 
\centerline{\small{~~~~$y$}}
\centerline{\small{~~~~(b)}}
\end{minipage}
\hfill
\begin{minipage}[b]{.32\linewidth}
\centerline{$~~~~R=5,~M=9$}
\centerline{\includegraphics[width=\linewidth]{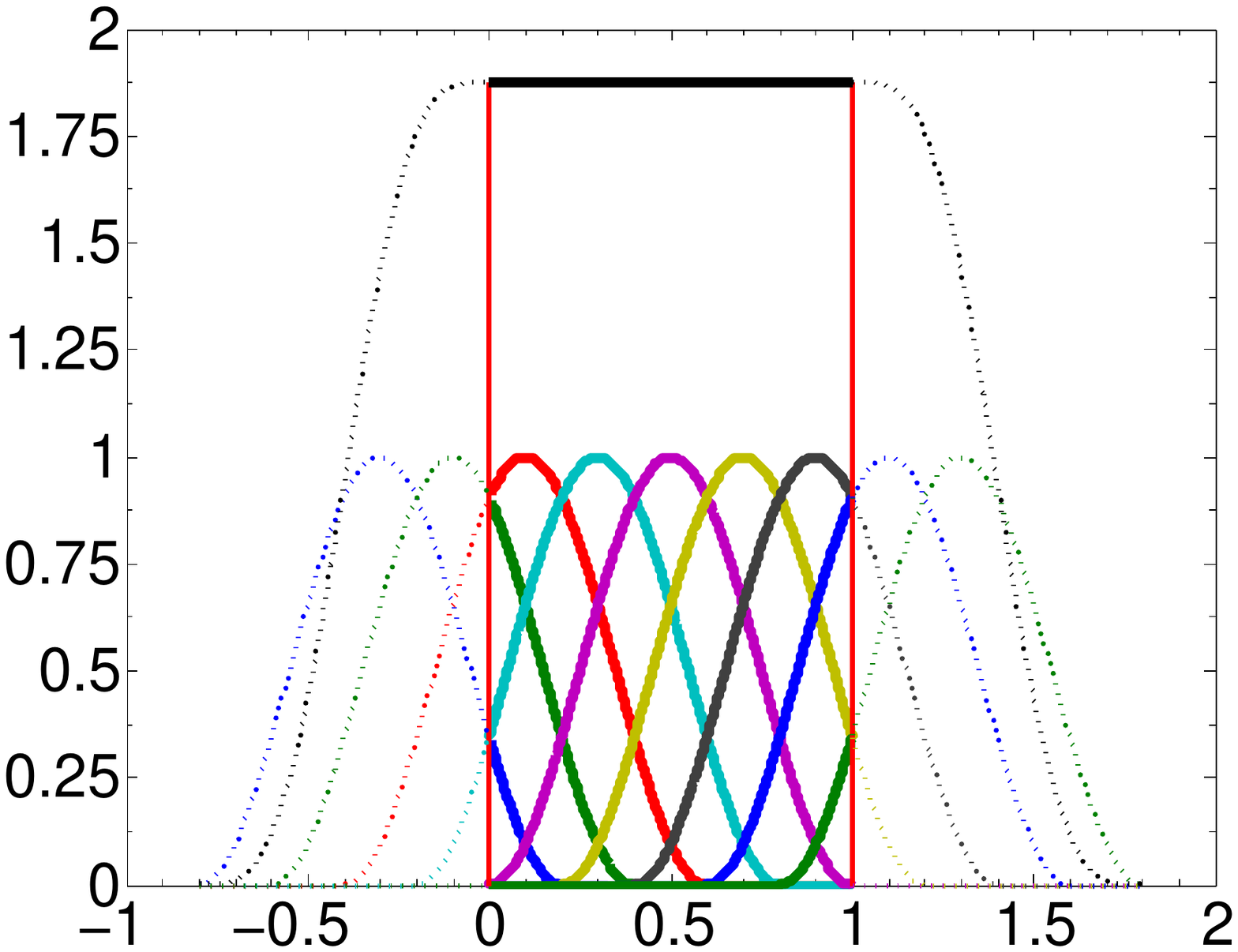}} 
\centerline{\small{~~~~$y$}}
\centerline{\small{~~~~(c)}}
\end{minipage} 
\caption {Translated versions of the shifted Hann kernel, as described in Corollary \ref{Co:undilated_translates}. 
The vertical red lines show the bounds $\left[0,\frac{M+1-R}{R}\right]$, and the top black lines show $H(y)$. } 
 \label{Fig:hann}
\end{figure}
\end{example}
\begin{corollary}\label{Co:uniform}
Given an upper bound, $\gamma$,
on the spectrum and a desired number of filters, $M$, let $R$ and $K$ be any integers satisfying $2<R\leq M$ and $K<\frac{R}{2}$, and for a real sequence $\left\{a_k\right\}_{k=0,1,\ldots,K}$ satisfying \eqref{Eq:a_constraint},
 define 
\begin{align}\label{Eq:gu_kernel}
\widehat{g^U}(\lambda):=
\sum_{k=0}^K a_k \cos\left(2\pi k \left(\frac{M+1-R}{R \gamma}\lambda+\frac{1}{2}\right)\right) 
\Identity_{\left\{-\frac{R \gamma}{M+1-R}\leq \lambda < 0\right\}}.
\end{align}
Then 
\begin{align*}
G(\lambda)=\sum_{m=1}^{M} \left[\widehat{g^U_m}(\lambda)\right]^2=
Ra_0^2+\frac{R}{2}\sum_{k=1}^K a_k^2,
~~\forall \lambda \in [0,\gamma], 
\end{align*} 
where 
\begin{align}\label{Eq:uniform_construction}
\widehat{g^U_m}(\lambda):=\widehat{g^U}\left(\lambda-m\frac{\gamma}{M+1-R}\right).
\end{align}
\end{corollary}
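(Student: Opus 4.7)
The plan is to reduce Corollary \ref{Co:uniform} to Corollary \ref{Co:undilated_translates} by a straightforward affine change of variables, since \eqref{Eq:gu_kernel} is essentially a scaled and reflected/shifted copy of the prototype kernel $\hat{h}$ from Corollary \ref{Co:undilated_translates}.

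First I would introduce the substitution $y = \frac{M+1-R}{R\gamma}\lambda$, which maps the target interval $\lambda \in [0,\gamma]$ onto $y \in [0,\frac{M+1-R}{R}]$, i.e.\ precisely the interval on which \eqref{Eq:co1_statement} holds. The translation step $m \frac{\gamma}{M+1-R}$ in \eqref{Eq:uniform_construction} becomes a translation by $\frac{m}{R}$ in the $y$-variable, so the natural reindexing is $m' := m - R$; as $m$ ranges over $\{1,\ldots,M\}$, $m'$ ranges over $\{1-R,\ldots,M-R\}$, matching the range of summation in Corollary \ref{Co:undilated_translates}.

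The main computation is then to verify that $\widehat{g^U_m}(\lambda) = \hat{h}(y - m'/R)$ under this change of variables. For the cosine factor, a direct substitution gives
\begin{align*}
\frac{M+1-R}{R\gamma}\left(\lambda - m\tfrac{\gamma}{M+1-R}\right) + \tfrac{1}{2}
= y - \tfrac{m}{R} + \tfrac{1}{2}
= \left(y - \tfrac{m'}{R}\right) - \tfrac{1}{2} + 1,
\end{align*}
and since $\cos(2\pi k(\cdot + 1)) = \cos(2\pi k(\cdot))$ for integer $k$, the cosine arguments agree with those of $\hat{h}(y-m'/R)$. For the indicator, the condition $-\frac{R\gamma}{M+1-R} \leq \lambda - m\frac{\gamma}{M+1-R} < 0$ translates to $\frac{m'}{R} \leq y < \frac{m'+R}{R}$, which is exactly the support of $\hat h(y - m'/R)$.

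Squaring and summing over $m = 1,\ldots,M$ therefore equals $\sum_{m'=1-R}^{M-R}[\hat{h}(y - m'/R)]^2$, and Corollary \ref{Co:undilated_translates} gives the value $Ra_0^2 + \frac{R}{2}\sum_{k=1}^K a_k^2$ on $y \in [0,\frac{M+1-R}{R}]$, i.e.\ on $\lambda \in [0,\gamma]$. The only mild bookkeeping issue, and the step most prone to sign errors, is checking that the constant shift by $\tfrac{1}{2}$ inside the cosine in \eqref{Eq:gu_kernel} correctly absorbs the integer offset introduced by the reindexing $m \mapsto m - R$; the rest is a mechanical change of variables.
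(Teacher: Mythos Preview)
Your proposal is correct and follows essentially the same approach as the paper: identify $\widehat{g^U}$ with an affinely rescaled copy of $\hat h$ from Corollary~\ref{Co:undilated_translates} and apply the reindexing $m'=m-R$ so that the sum reduces directly to \eqref{Eq:co1_statement}. One harmless arithmetic slip: in your displayed chain, $y-\tfrac{m}{R}+\tfrac{1}{2}$ actually equals $(y-\tfrac{m'}{R})-\tfrac{1}{2}$ (no extra ``$+1$''), so the cosine arguments in fact match those of $\hat h(y-m'/R)$ exactly, without even needing the periodicity you invoke.
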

\begin{proof}
Take $\widehat{g^U}(\lambda)=\hat{h}\left(\Bigl[\frac{\lambda}{\gamma}\Bigr]\Bigl[\frac{M+1-R}{R}\Bigr]+1\right)$, with $\hat{h}(\cdot)$ from Corollary \ref{Co:undilated_translates}, and then perform a change of variable $m^{\prime}=m-R$. 
\end{proof}
\addtocounter{example}{-1}
\begin{example}[cont.] In Figure \ref{Fig:hann2}, we take $\gamma=\lambda_{\max}=12$ and stretch the filters of Figure \ref{Fig:hann} to fit the spectrum $[0,\lambda_{\max}]$, for each of the three different pairs of $R$ and $M$. In each case, $G(\lambda)$ is a constant, and therefore, by Lemma \ref{Le:frame}, the dictionary $\left\{T_i g^U_m\right\}_{i=1,2,\ldots,N;~m=1,2,\ldots,M}$ is a tight frame. 
\begin{figure}[h]
\centering
\begin{minipage}[b]{.32\linewidth}
\centerline{$~~~~R=3,~M=3$}
\centerline{\includegraphics[width=\linewidth]{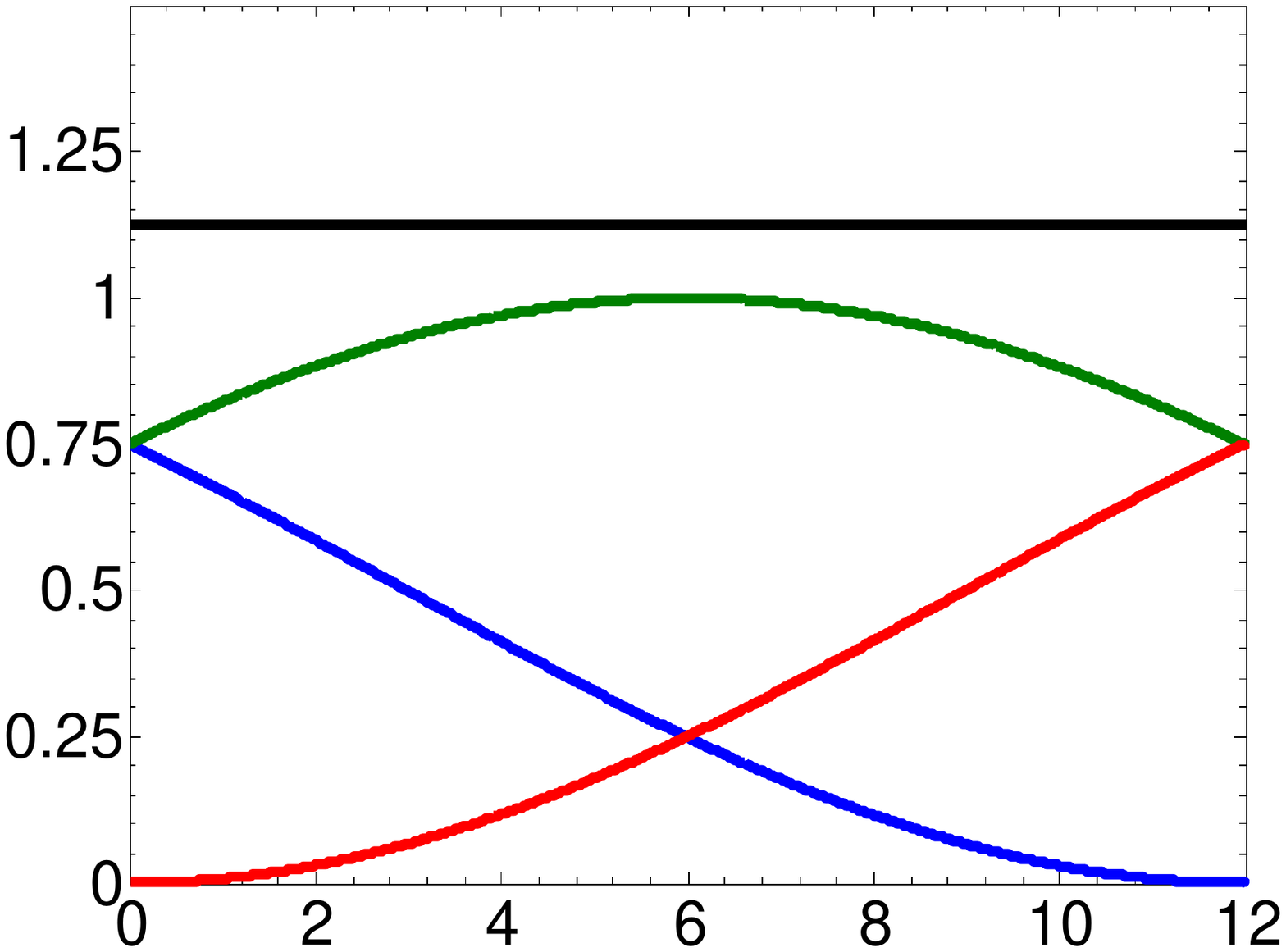}} 
\centerline{\small{~~~~$\lambda$}}
\centerline{\small{~~~~(a)}}
\end{minipage}
\hfill
\begin{minipage}[b]{.32\linewidth}
\centerline{$~~~~R=3,~M=9$}
\centerline{\includegraphics[width=\linewidth]{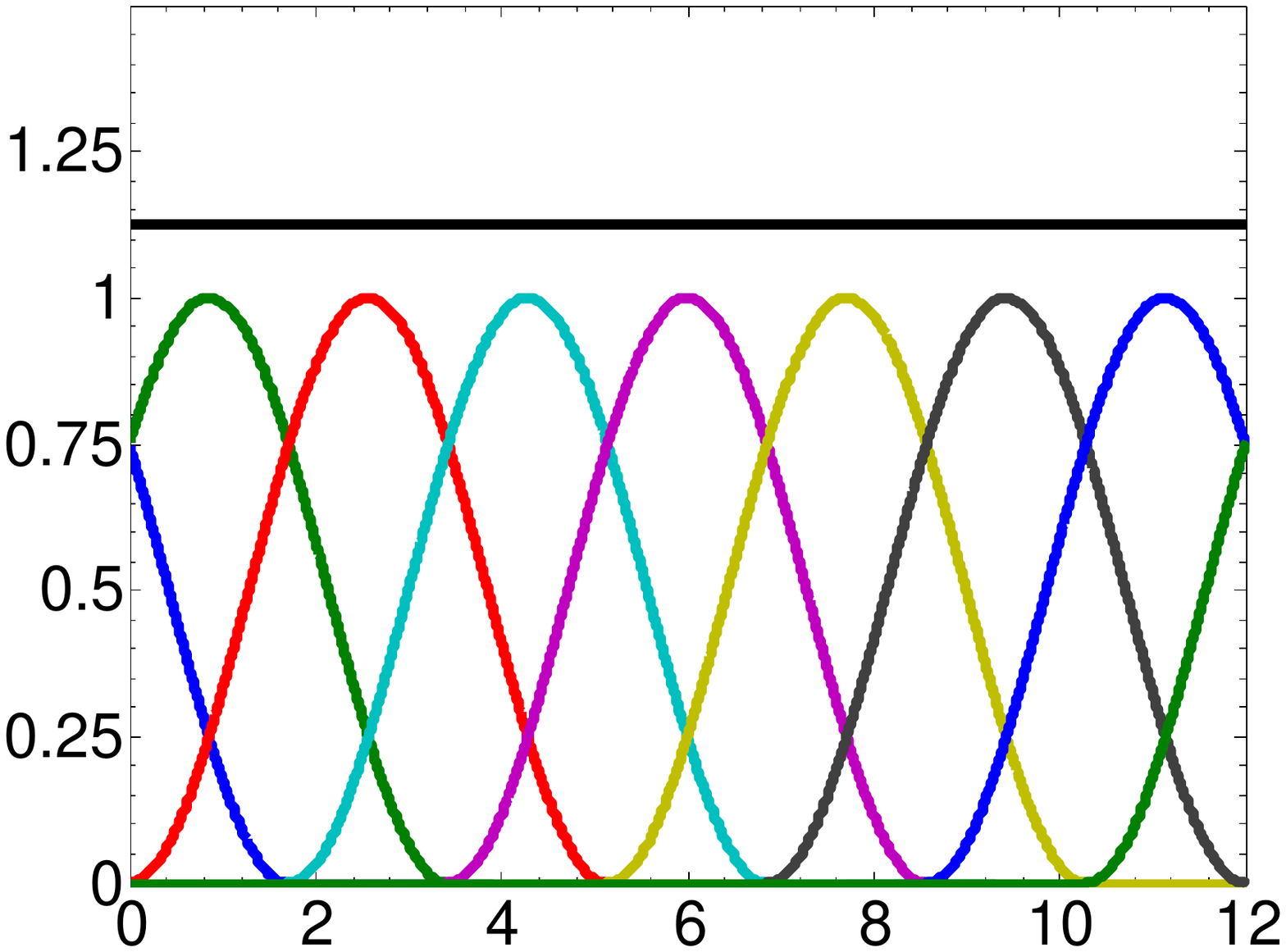}} 
\centerline{\small{~~~~$\lambda$}}
\centerline{\small{~~~~(b)}}
\end{minipage}
\hfill
\begin{minipage}[b]{.32\linewidth}
\centerline{$~~~~R=5,~M=9$}
\centerline{\includegraphics[width=\linewidth]{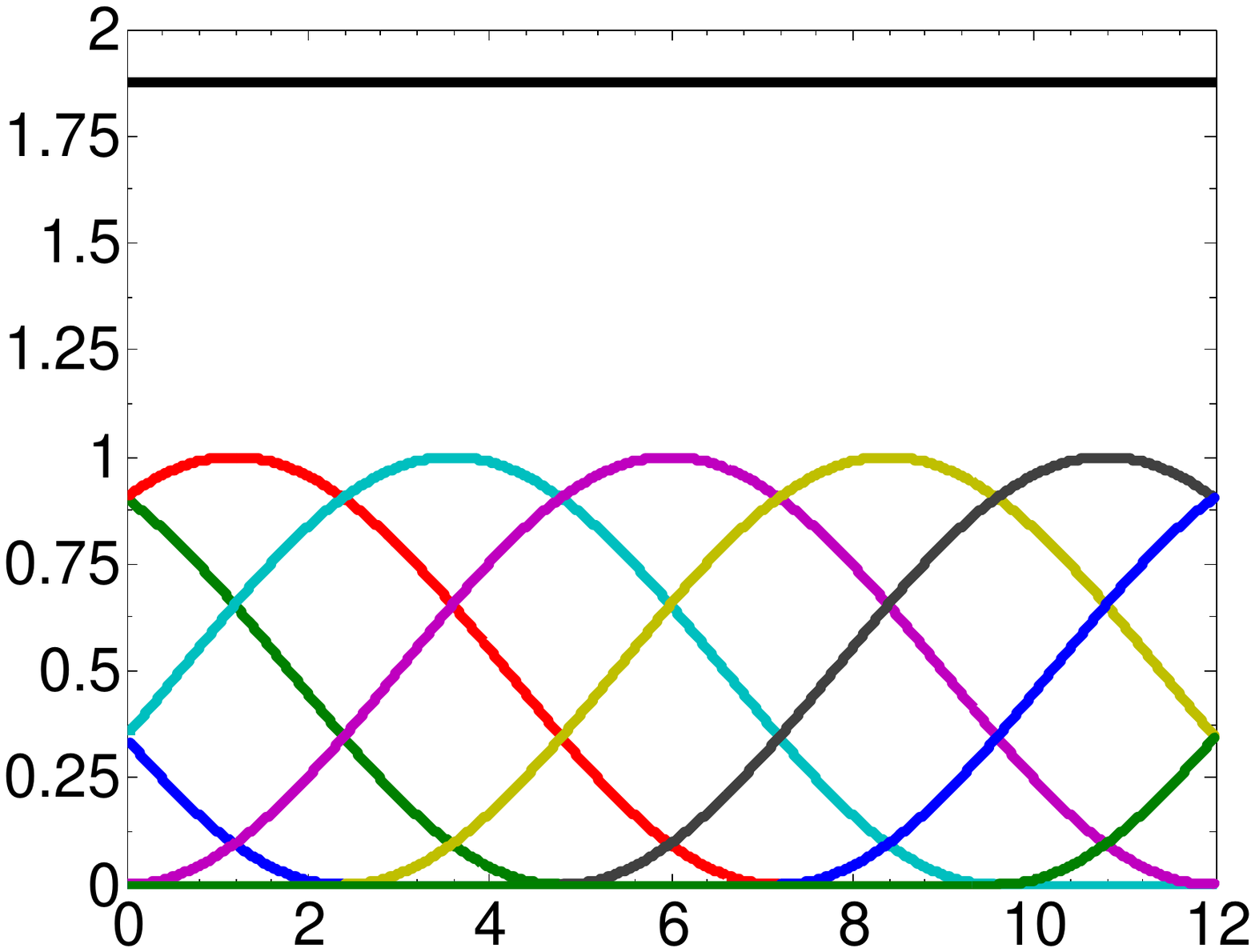}} 
\centerline{\small{~~~~$\lambda$}}
\centerline{\small{~~~~(c)}}
\end{minipage} 
\caption {Spectral graph filter banks of uniform translates, $\left\{\widehat{g^U_m}(\lambda)\right\}_{m=1,2,\ldots,M}$, of a shifted and scaled Hann kernel, $\widehat{g^U}(\lambda)$. The top black line in each figure shows $G(\lambda)$. Comparing (b) and (c), we see that for a fixed number of filters $M$, the overlap of the filters increases as the parameter $R$ increases.} 
 \label{Fig:hann2}
\end{figure}
\end{example}

\begin{remark}
For any fixed number $K$ of cosine terms in the definition of the kernel $\widehat{g^U}(\cdot)$ in \eqref{Eq:gu_kernel}, we can choose a nonzero coefficient sequence $\left\{a_k\right\}_{k=0,1,\ldots,K}$ such that $\widehat{g^U}(\cdot) \in {\cal C}^{2K-1}$. This can be seen by differentiating the kernel to find a linear system of equations for the coefficients \cite{nu81-1,ha78}. To satisfy this system of equations, the coefficient sequence must be in the kernel of a $K\times (K+1)$ matrix. Since this kernel is never trivial, we can always find nonzero coefficient sequences yielding the desired degree of smoothness.
\end{remark}

\section{Warping}
To generate systems of filter banks in the graph spectral domain, we now warp the uniform translates constructed in the previous section. Specifically, for a given warping function $\omega:[0,\lambda_{\max}]\rightarrow \Rbb$, we consider filters of the form
\begin{align}\label{Eq:warp_def}
\widehat{g_m}(\lambda) = \widehat{g_m^U}\left(\omega(\lambda)\right),~m=1,2,\ldots,M.
\end{align}
The role of the warping function $\omega(\cdot)$ is to scale the spectrum, and for different applications, different scalings of the spectrum can be desirable. In Section \ref{Se:log_wavelet}, we use a logarithmic scaling to generate a tight graph wavelet frame that is only adapted to the length of the spectrum. In Sections \ref{Se:adapted} and \ref{Se:random},
we leverage this same warping idea to scale the spectrum according to the distribution of eigenvalues over the spectrum,
in order to generate spectrum-adapted vertex-frequency frames. In Section \ref{Se:wavelets}, we compose the logarithmic and spectrum-based warping functions to generate spectrum-adapted graph wavelet frames.  In all cases, the warping function should be nondecreasing, 
and it is also desirable for the warping function to be smooth in order that the warped filters are smooth.
As detailed in the following remark, 
the sum of the squared magnitudes of the warped filters defined in \eqref{Eq:warp_def} is the same as the sum of the squared magnitudes of the initial system of translates,
and therefore the warping method is a good choice for constructing tight frames.
\begin{remark} If $\omega:[0,\lambda_{\max}] \rightarrow [0,\gamma]$, then, by Corollary \ref{Co:uniform} and Lemma \ref{Le:frame}, $\left\{T_i g_m\right\}_{i=1,2,\ldots,N;~m=1,2,\ldots,M}$ is also a tight frame, because
\begin{align*}
\sum_{m=1}^M |\widehat{g_m}(\lambda)|^2=\sum_{m=1}^M |\widehat{g_m^U}(\omega(\lambda))|^2=
Ra_0^2+\frac{R}{2}\sum_{k=1}^K a_k^2,
~\forall \lambda \in [0,\lambda_{\max}].
\end{align*}
\end{remark}

\subsection{Example: Tight Graph Wavelet Frames} \label{Se:log_wavelet}
Recently, \cite{christensen_warp} 
demonstrates 
that 
wavelets on the real line can be constructed by warping Gabor systems with a logarithmic warping function. In the same spirit, we now present a new method to construct tight wavelet frames in the graph setting by using a logarithmic function to warp systems of uniform translates in the graph spectral domain. 

To construct a set of $M-1$ wavelet kernels and one scaling kernel, we proceed as follows. First, define the warping function $\omega(x):=\log(x)$.\footnote{We take $\omega(0)$ to be $-\infty$ so that $\widehat{g_m}(0):=\widehat{g_{m-1}^U}\bigl(-\infty\bigr)=0$. Alternatively, in numerical implementations, we can define $\omega(x):=\log(x)+\epsilon$, where $\epsilon$ is an arbitrarily small constant.}
Second, as described in Corollary \ref{Co:uniform}, choose $2<R\leq M$ and $K\leq \frac{R}{2}$ and construct a set of uniform translates, $\left\{\widehat{g_m^U}(\cdot)\right\}_{m=1,2,\ldots,M-1}$, with $\gamma=\omega(\lambda_{\max})$. Finally, define the $M-1$ wavelet kernels as 
\begin{align} \label{Eq:logwarp0}
\widehat{g_m}(\lambda):=\widehat{g_{m-1}^U}\bigl(\omega(\lambda)\bigr),~m=2,3,\ldots,M, 
\end{align} 
and the scaling kernel as
\begin{align} \label{Eq:logwarp1}
\widehat{g_1}(\lambda):=\sqrt{Ra_0^2+\frac{R}{2}\sum_{k=1}^K a_k^2
-\sum_{m=2}^M |\widehat{g_m}(\lambda)|^2}.
\end{align}
Note that for some values of $\lambda$ in $[0,\lambda_{\max}]$, $\omega(\lambda)\notin[0,\gamma]$; however, 
the form of the scaling kernel \eqref{Eq:logwarp1} and Lemma \ref{Le:frame} ensure that $\left\{T_i g_m\right\}_{i=1,2,\ldots,N;~m=1,2,\ldots,M}$ is still a tight wavelet frame.

\begin{example} \label{Ex:wavelets}
In Figure \ref{Fig:wavelets}(c), we show an example of graph wavelet and scaling kernels generated in the above fashion, using Hann kernels ($K=1$ and $a_0=a_1=\frac{1}{2}$) with $\lambda_{\max}=12$, $R=3$, and $M=8$. Comparing this system to the corresponding kernels used for the spectral graph wavelet transform (SGWT) \cite{sgwt} and Meyer-like graph wavelet frame \cite{leonardi_fmri,leonardi_multislice}, we see that, similar to the Meyer-like kernels, the log-warped kernels lead to a tight frame and 
the support of each wavelet kernel is a strict subset of the spectrum $[0,\lambda_{\max}]$
(analogously to bandlimited wavelets on the real line);  
however, the overlap and shape of the wavelet kernels is closer to the spline-based SGWT wavelet kernels.

\begin{figure}[h]
\centering
\begin{minipage}[b]{.32\linewidth}
\centerline{~~~Spectral Graph}
\centerline{~~~Wavelet Frame \cite{sgwt}}
\centerline{\includegraphics[width=\linewidth]{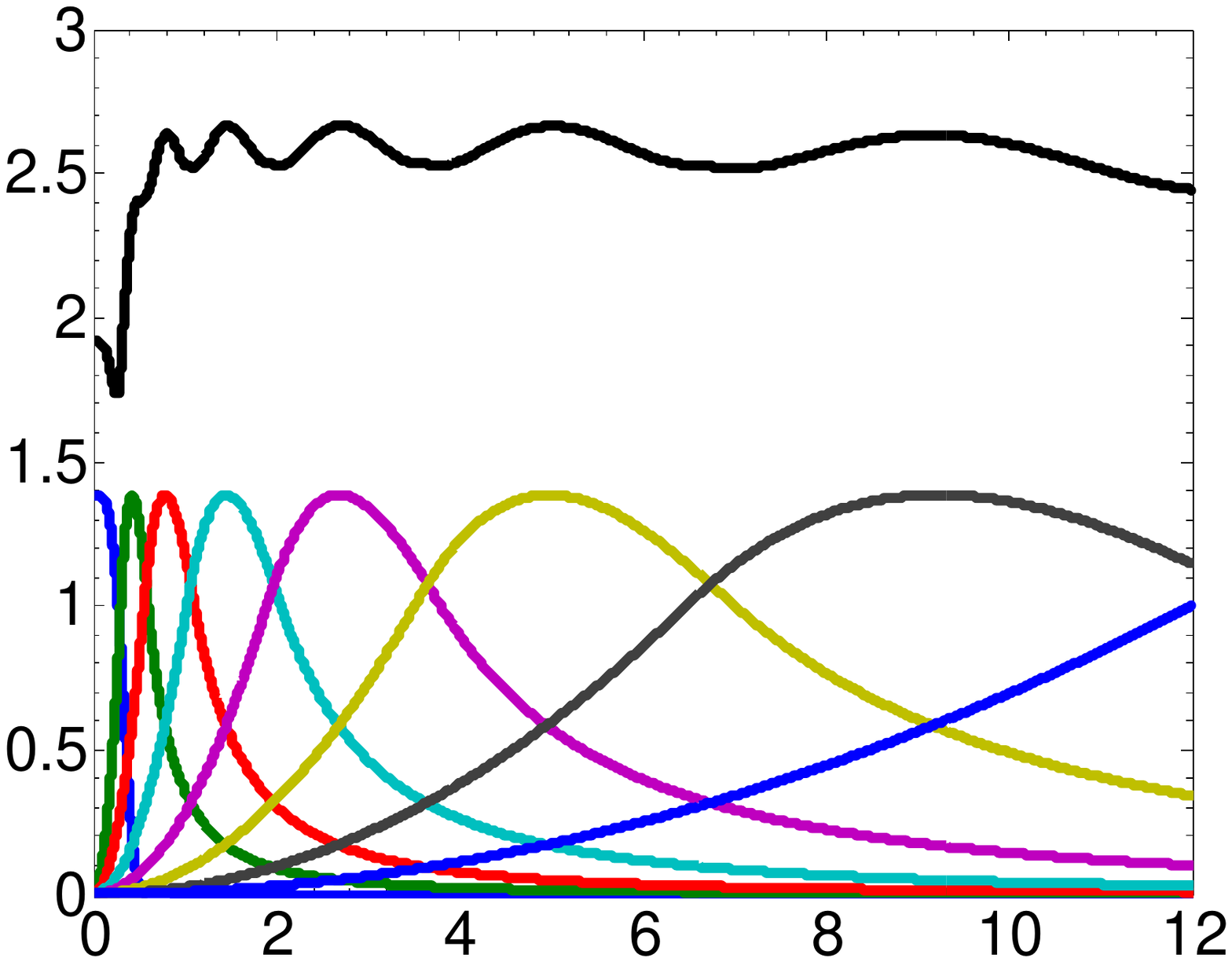}} 
\centerline{\small{~~~~$\lambda$}}
\centerline{\small{~~~~(a)}}
\end{minipage}
\hfill
\begin{minipage}[b]{.32\linewidth}
\centerline{~~~Meyer-Like Tight}
\centerline{~~~Graph Wavelet Frame \cite{leonardi_fmri,leonardi_multislice}}
\centerline{\includegraphics[width=\linewidth]{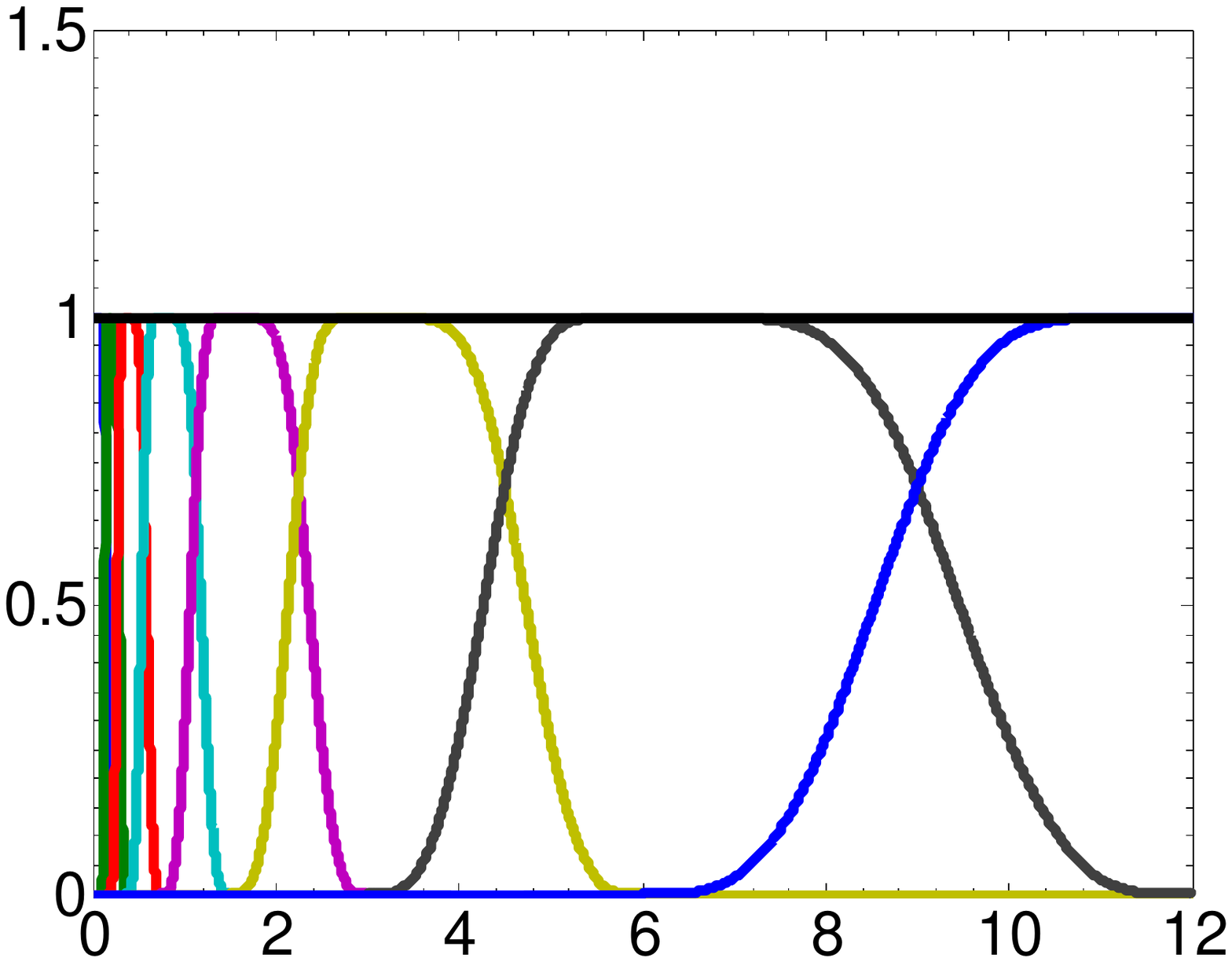}} 
\centerline{\small{~~~~$\lambda$}}
\centerline{\small{~~~~(b)}}
\end{minipage}
\hfill
\begin{minipage}[b]{.32\linewidth}
\centerline{~~~Log-Warped Tight}
\centerline{~~~Graph Wavelet Frame}
\centerline{\includegraphics[width=\linewidth]{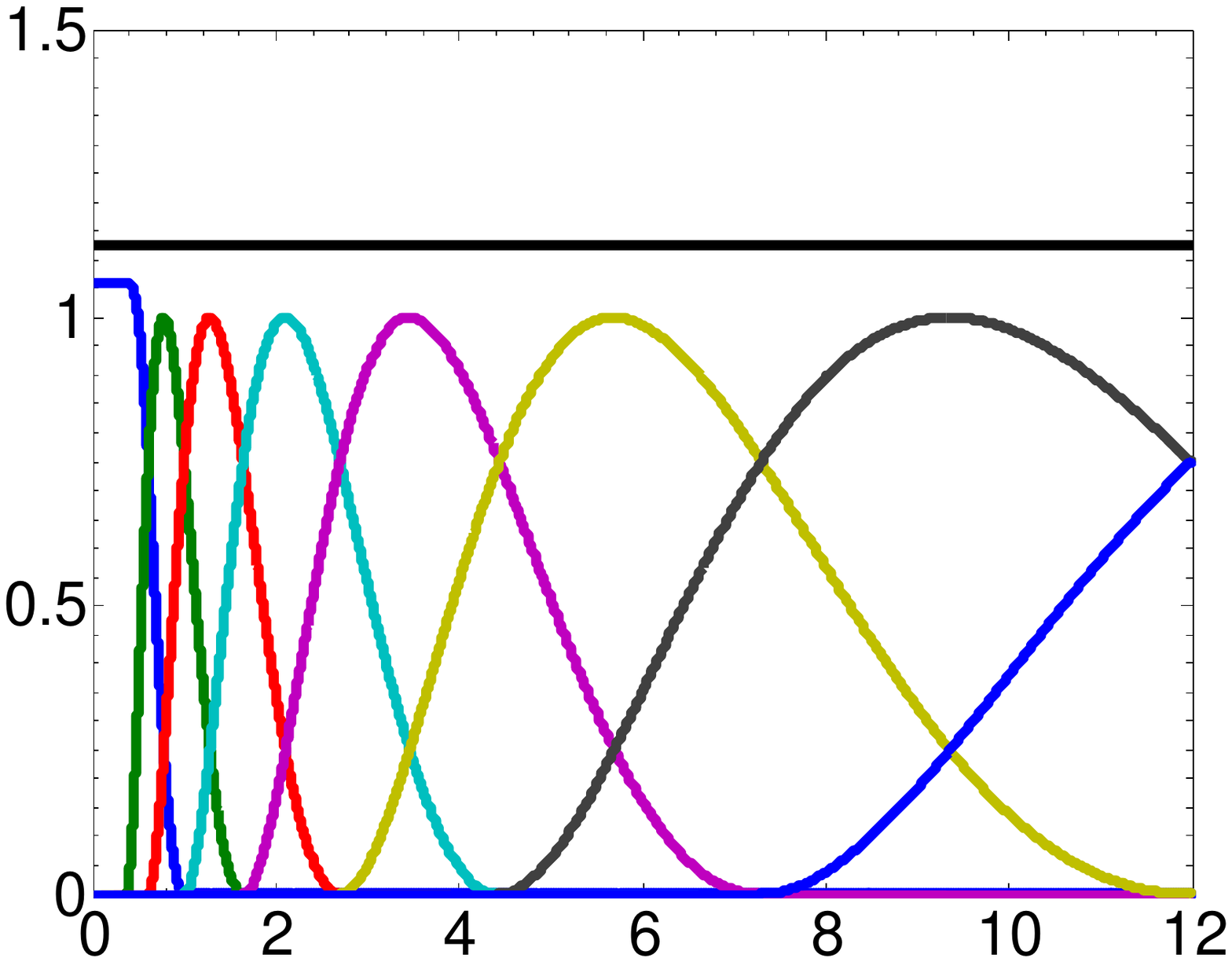}} 
\centerline{\small{~~~~$\lambda$}}
\centerline{\small{~~~~(c)}}
\end{minipage} 
\caption {Three different sets of wavelet and scaling kernels in the graph spectral domain. The top black line in each figure is $G(\lambda)$.} 
 \label{Fig:wavelets}
\end{figure}
\end{example}

\section{Spectrum-Adapted Filters} \label{Se:adapted}
While each atom of the form $T_i g_m=\sqrt{N}\widehat{g_m}(\L)\delta_i$ generated from the filters in Examples \ref{Ex:uniform} and \ref{Ex:wavelets} is adapted to the particular graph spectrum through the matrix function $\widehat{g_m}(\L)$, the filters themselves are only adapted to the length of the discrete spectrum, and not to the specific locations of the eigenvalues. As discussed in \cite{shuman_ACHA_2013}, in order to extract information from signals with oscillations that are localized on the graph, it is useful to develop atoms that are simultaneously localized in both the vertex domain and the graph spectral domain. In classical continuous-time or discrete-time time-frequency analysis, we can form such atoms by modulating and then translating a window, where the modulation is a translation in the Fourier domain. In the graph setting, however, the Laplacian spectrum is not only finite, but it is not uniformly distributed. Therefore,   
as Example \ref{Ex:eigen_loc} below demonstrates, simply shifting filters in the graph spectral domain is not the ideal way to break the spectrum up into different frequency bands for analysis. 
\begin{example}\label{Ex:eigen_loc}
In Figure \ref{Fig:three_graphs}, we show three different graphs with $N=64$ vertices. 
In Figure \ref{Fig:uniform_eigen_loc}, we plot systems of eight uniform translates of the form \eqref{Eq:uniform_construction}, with $\gamma=\lambda_{\max}$ for the three different graphs.  
The filters are only adapted to the length of the spectrum, $\lambda_{\max}$; however, we also show the locations of the graph Laplacian eigenvalues with ``x'' marks on the horizontal axis. Throughout the paper, we mark the eigenvalues locations that are used in the design of the filters in red, and those that are not known or not used in the design of the filters in black.

\begin{figure}[h]
\centering
\begin{minipage}[b]{.32\linewidth}
\centerline{Path Graph}
\centerline{\includegraphics[width=\linewidth]{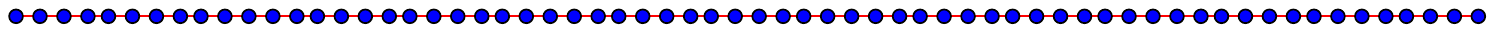}} 
\centerline{\small{(a)}}
\end{minipage}
\hfill
\begin{minipage}[b]{.32\linewidth}
\centerline{Sensor Network}
\centerline{\includegraphics[width=\linewidth]{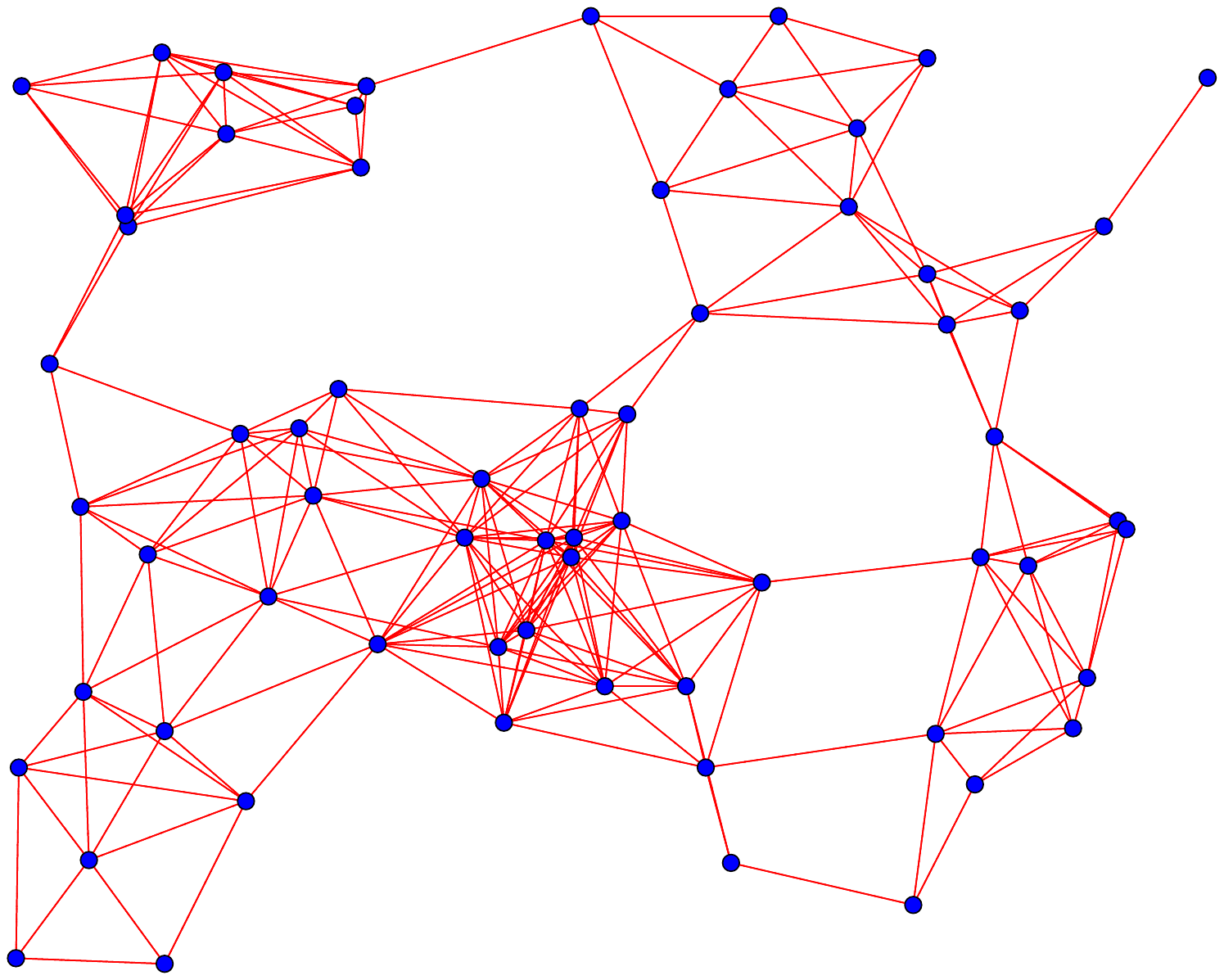}} 
\centerline{\small{(b)}}
\end{minipage}
\hfill
\begin{minipage}[b]{.32\linewidth}
\centerline{Comet Graph}
\centerline{\includegraphics[width=\linewidth]{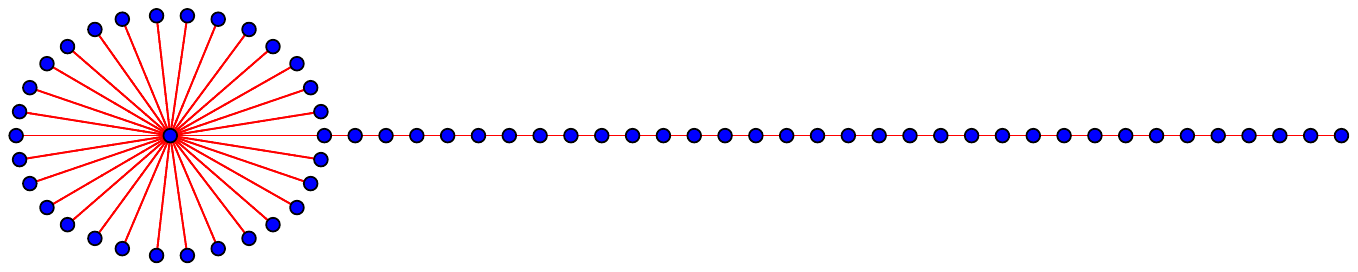}} 
\centerline{\small{(c)}}
\end{minipage} 
\caption {Three different graphs with 64 vertices. The degree of the center vertex in the comet graph in (c) is 30. The non-zero edge weights in (a) and (c) are all equal to 1. The edge weights in the  
sensor network in (b) are assigned based on physical distance via a thresholded Gaussian kernel weighting function (see, e.g., \cite[Equation (1)]{shuman_SPM}).} 
 \label{Fig:three_graphs}
\end{figure}
\begin{figure}[h]
\centering
\begin{minipage}[b]{.32\linewidth}
\centerline{~~~Path Graph}
\centerline{\includegraphics[width=\linewidth]{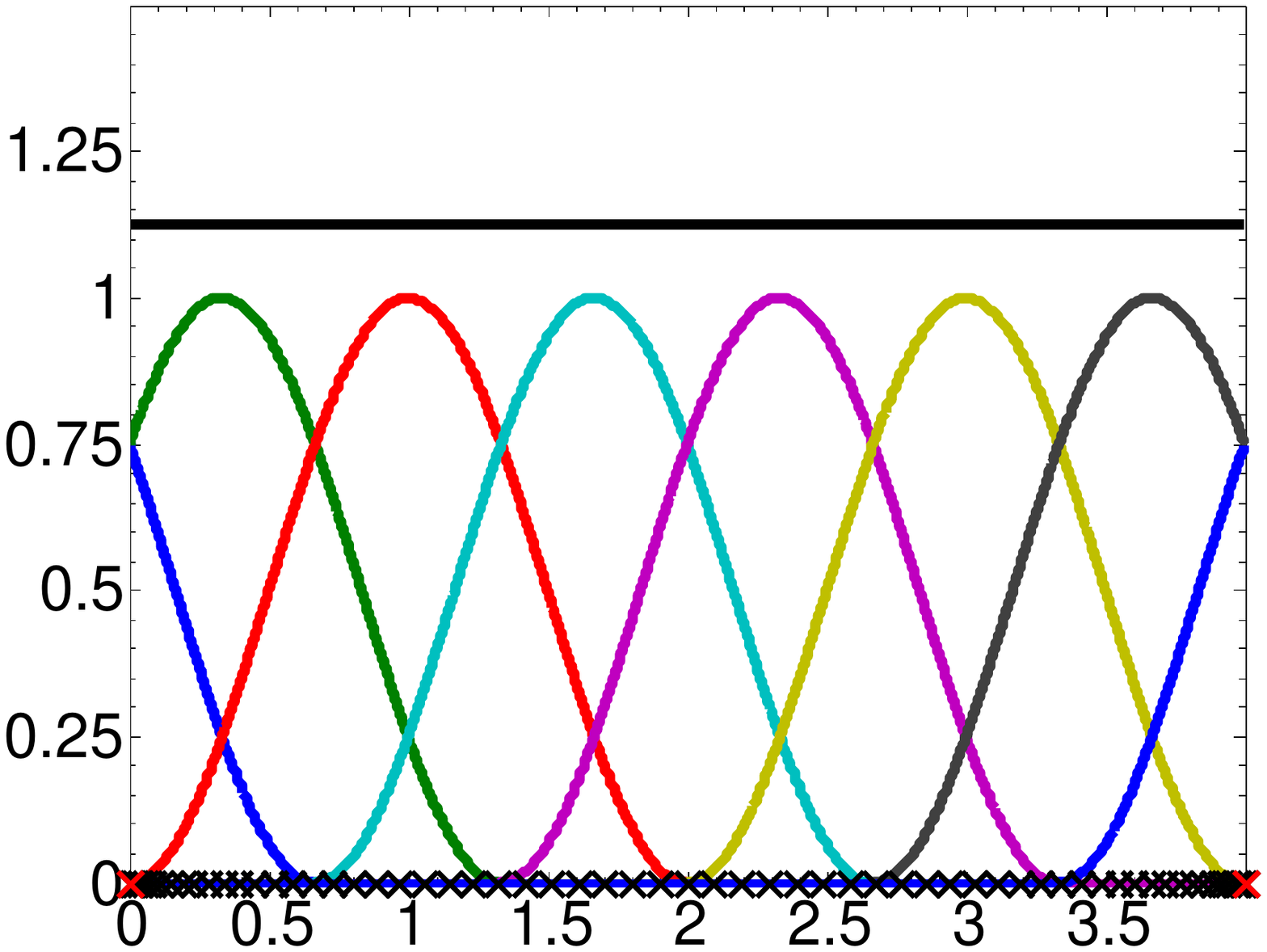}} 
\centerline{\small{~~~~$\lambda$}}
\centerline{\small{~~~~(a)}}
\end{minipage}
\hfill
\begin{minipage}[b]{.32\linewidth}
\centerline{~~~Sensor Network}
\centerline{\includegraphics[width=\linewidth]{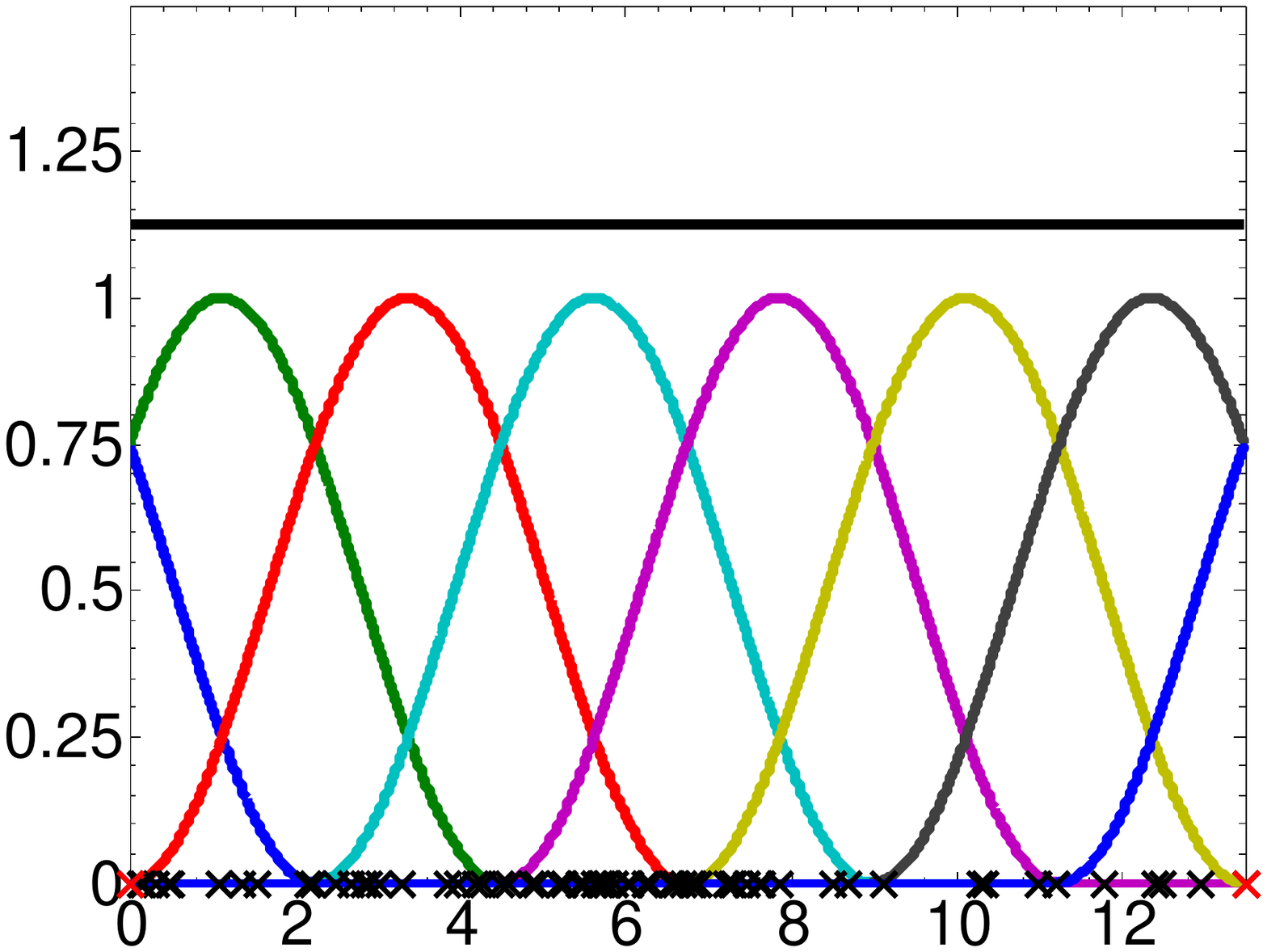}} 
\centerline{\small{~~~~$\lambda$}}
\centerline{\small{~~~~(b)}}
\end{minipage}
\hfill
\begin{minipage}[b]{.32\linewidth}
\centerline{~~~Comet Graph}
\centerline{\includegraphics[width=\linewidth]{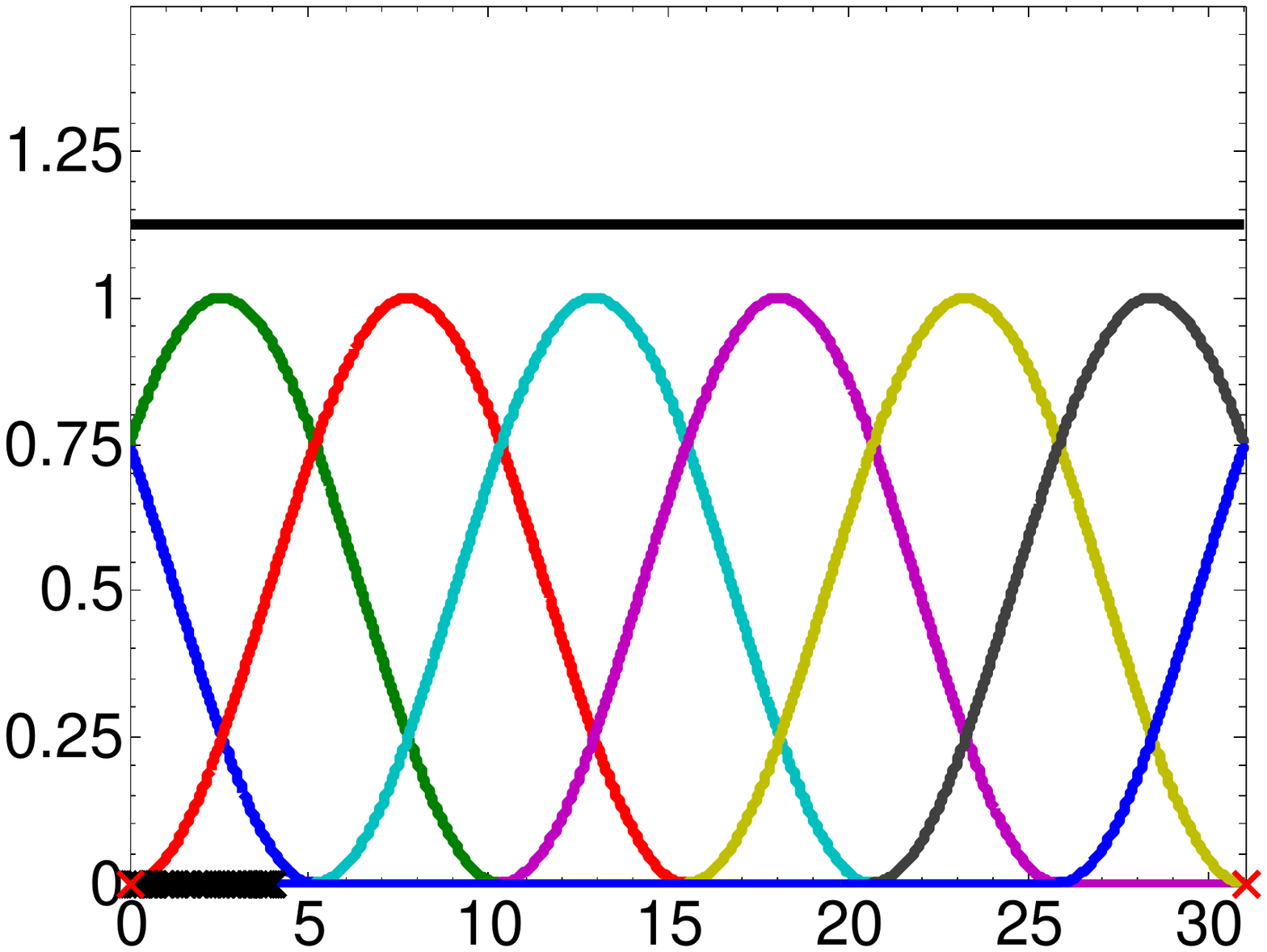}} 
\centerline{\small{~~~~$\lambda$}}
\centerline{\small{~~~~(c)}}
\end{minipage} 
\caption {Systems of uniformly translated filters, $\left\{\widehat{g_m^U}\right\}_{m=1,2,\ldots,8}$, adapted to the length, $\lambda_{\max}$, of the graph Laplacian spectrum for three different graphs, each with $N=64$ vertices. The locations of the graph Laplacian eigenvalues are marked on the horizontal axis. Only those shown in red ($\lambda_0$ and $\lambda_{\max}$) are used in the design of the filters.} 
 \label{Fig:uniform_eigen_loc}
\end{figure}

We see that simply shifting filters in the graph spectral domain may lead to a disparity in the number of graph Laplacian eigenvalues (frequencies) in each frequency band, which is not ideal for information extraction.
As an extreme example, for the comet graph, because $\widehat{g_5^U}(\lambda)=0$ for all $\lambda \in \sigma(\L)$ in Figure \ref{Fig:uniform_eigen_loc}(c), $\ip{f}{T_i g_5^U}=0$ for all $i\in\{1,2,\ldots,N\}$ and every signal $f\in \Rbb^N$. Therefore, given a fixed number of filters and knowledge about the locations of the eigenvalues, $\widehat{g_5^U}(\cdot)$ (shown in magenta) is not a good choice of a filter, because $\ip{f}{T_i g_5^U}$ provides no additional information about \emph{any} signal $f$ on this comet graph.
\end{example}

In the remainder of this section, we present a method to incorporate some knowledge about the locations of the graph Laplacian eigenvalues into the design of the system of filters, in a manner such that the resulting analysis coefficients $\ip{f}{T_i g_m}$ provide more information about the signal $f$. Our general approach is to estimate the density of the graph Laplacian eigenvalues, and then warp the spectrum accordingly.

In Section \ref{Se:warp_exact}, 
we assume that we know all of the eigenvalues exactly; however, in extremely large graphs, it is computationally prohibitive to compute this full spectrum, and therefore in Section \ref{Se:approx_spectrum}, we discuss how to approximate the 
density of the graph Laplacian eigenvalues in a more efficient manner.

\subsection{Spectrum-Based Warping Functions} \label{Se:warp_exact}
The \emph{spectral density function} (see, e.g., \cite[Chapter 6]{van_mieghem}) or \emph{empirical spectral distribution} (see, e.g., \cite[Chapter 2.4]{tao_random_matrix}) of the graph Laplacian eigenvalues of a given graph $\G$ with $N$ vertices is the probability measure 
\vspace{-.1cm}
\begin{align*}
p_{\lambda}(s):=\frac{1}{N}\sum_{\l=0}^{N-1} \Identity_{\left\{\lambda_{\l}=s\right\}}.
\end{align*}
Similarly, we can define a \emph{cumulative spectral density function} or \emph{empirical spectral cumulative distribution} as
\vspace{-.1cm}
\begin{align}\label{Eq:cdf}
P_{\lambda}(z):=\frac{1}{N}\sum_{\l=0}^{N-1} \Identity_{\left\{\lambda_{\l}\leq z\right\}}.
\end{align}

One method to adapt the uniform translates of Example \ref{Ex:eigen_loc} so that the support of each filter includes a similar number of eigenvalues is, as in \eqref{Eq:warp_def}, to let the filters be of the form $\widehat{g_m}(\lambda) = \widehat{g_m^U}\left(\omega(\lambda)\right)$, with the cumulative spectral density function \eqref{Eq:cdf} used as the warping function $\omega(\lambda)$.  
However, for finite deterministic graphs, doing so results in discontinuous filters, as the cumulative spectral density function is discontinuous. We prefer smooth filters, because (i) results characterizing the localization of  $T_i g_m$ in the vertex domain (see, e.g., \cite{shuman_ACHA_2013}) depend on smoothness of $\widehat{g_m}(\cdot)$ in the graph spectral domain; and (ii) smooth kernels can be better approximated by low-order polynomials, which is relevant for approximate computational approaches (see, e.g., \cite[Section 6]{sgwt}). 

Rather, we build a continuous warping function that approximates the cumulative spectral density function by interpolating the points\footnote{By setting the first interpolation point to $(0,0)$ and the last to $(\lambda_{\max},1)$, we ensure that the support, $[0,\lambda_{\max}]$, of the warped filters is mapped to the full support of the uniform translates. In the case of a repeated eigenvalue $\lambda_{\l}=\lambda_{\l+1}=\ldots=\lambda_{\l+k}$, we just include the single point $\left(\lambda_{\l+k},\frac{\l+k}{N-1}\right)$ in the set of interpolation points.} 
\begin{align}\label{Eq:interp_points}
\left\{\left(\lambda_{\l},\frac{\l}{N-1}\right)\right\}_{\l=0,1,\ldots,N-1}. 
\end{align}
We consider two interpolation methods: simple linear interpolation and monotonic cubic interpolation \cite{fritsch}.

\begin{figure}[ht]
\centering
\begin{minipage}[b]{.32\linewidth}
\centerline{~~~Path Graph}
\centerline{\includegraphics[width=\linewidth]{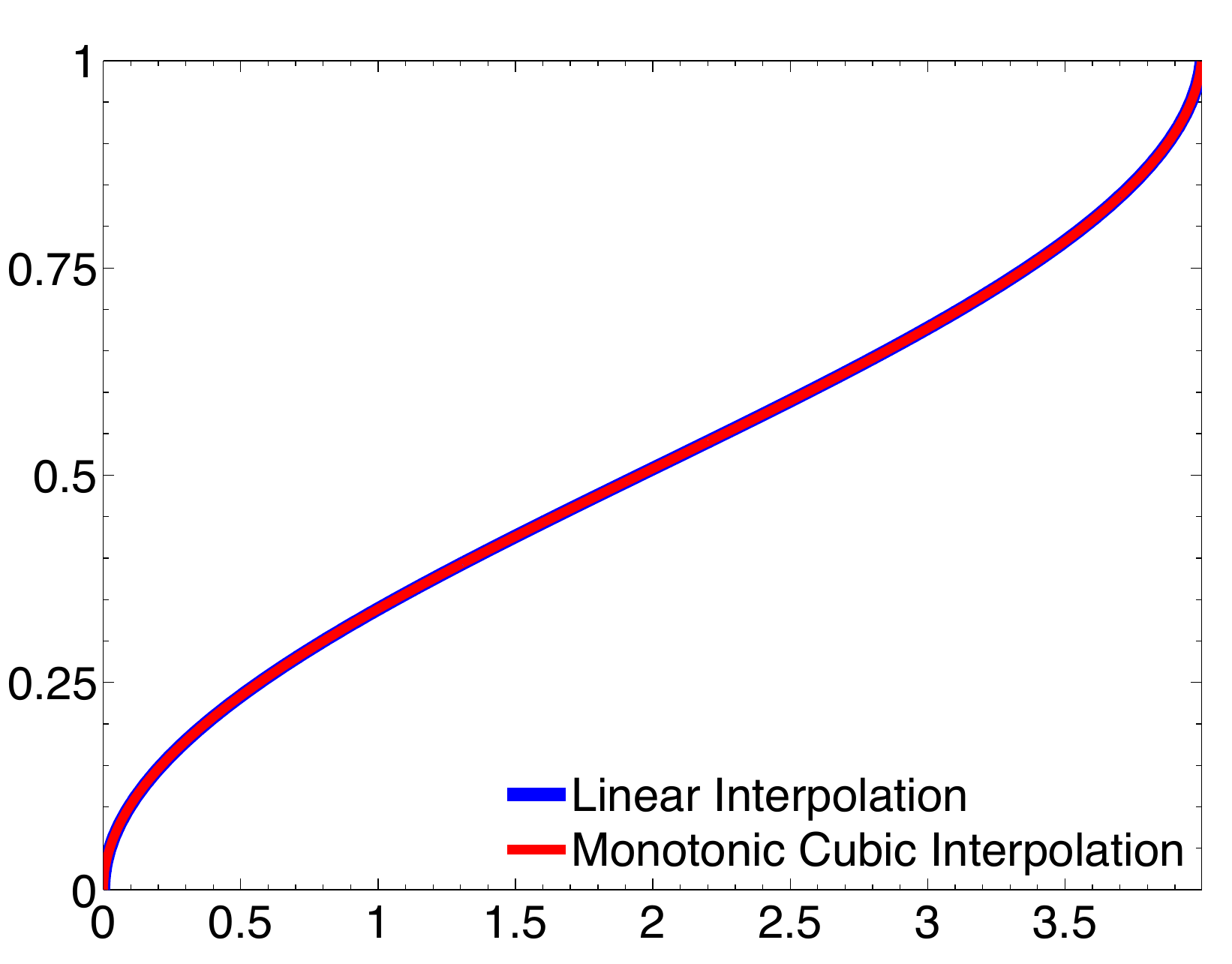}} 
\centerline{\small{~~~~$\lambda$}}
\centerline{\small{~~~~(a)}}
\end{minipage}
\hfill
\begin{minipage}[b]{.32\linewidth}
\centerline{~~~Sensor Network}
\vspace{.025in}
\centerline{\includegraphics[width=\linewidth]{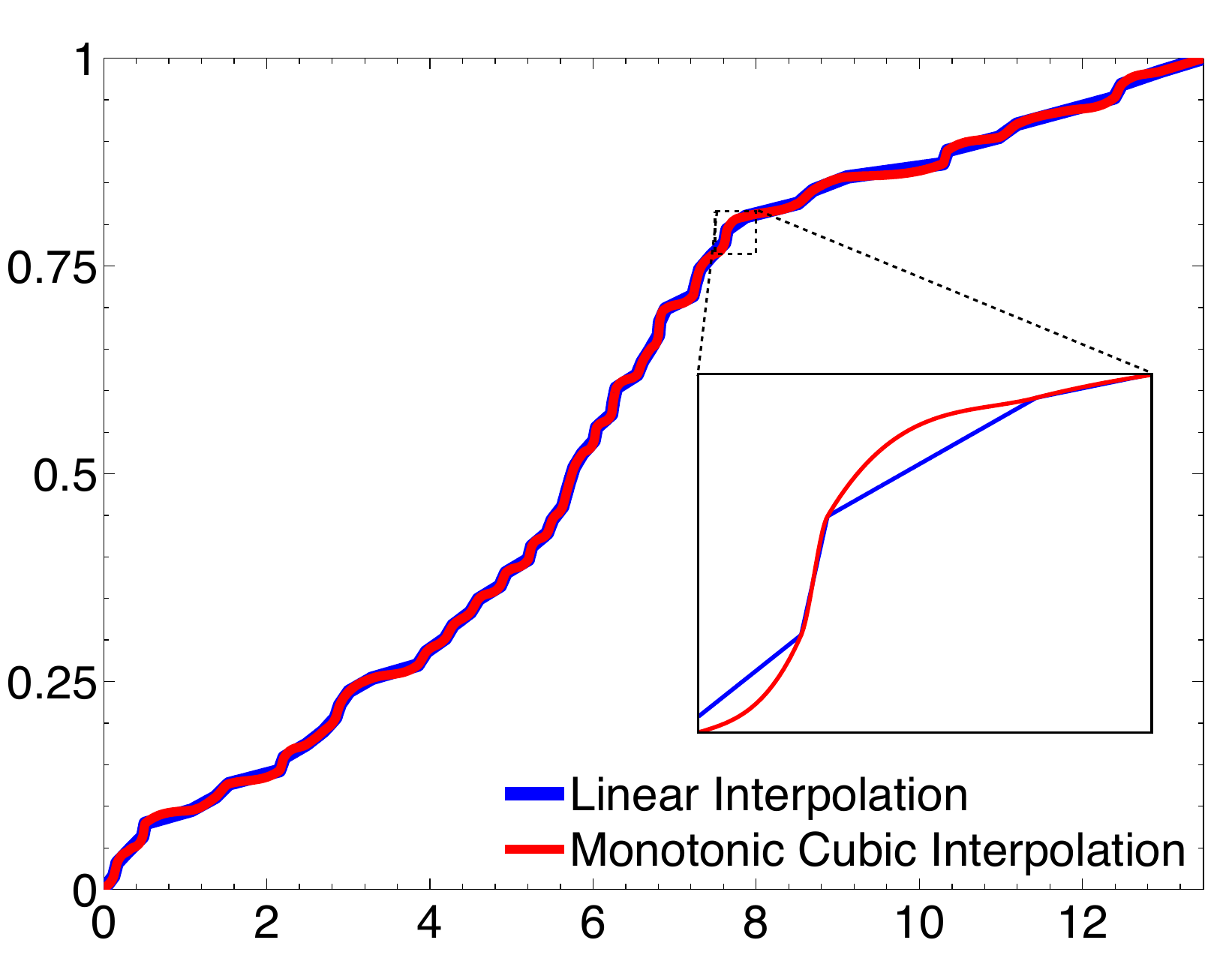}} 
\centerline{\small{~~~~$\lambda$}}
\centerline{\small{~~~~(b)}}
\end{minipage}
\hfill
\begin{minipage}[b]{.32\linewidth}
\centerline{~~~Comet Graph}
\centerline{\includegraphics[width=\linewidth]{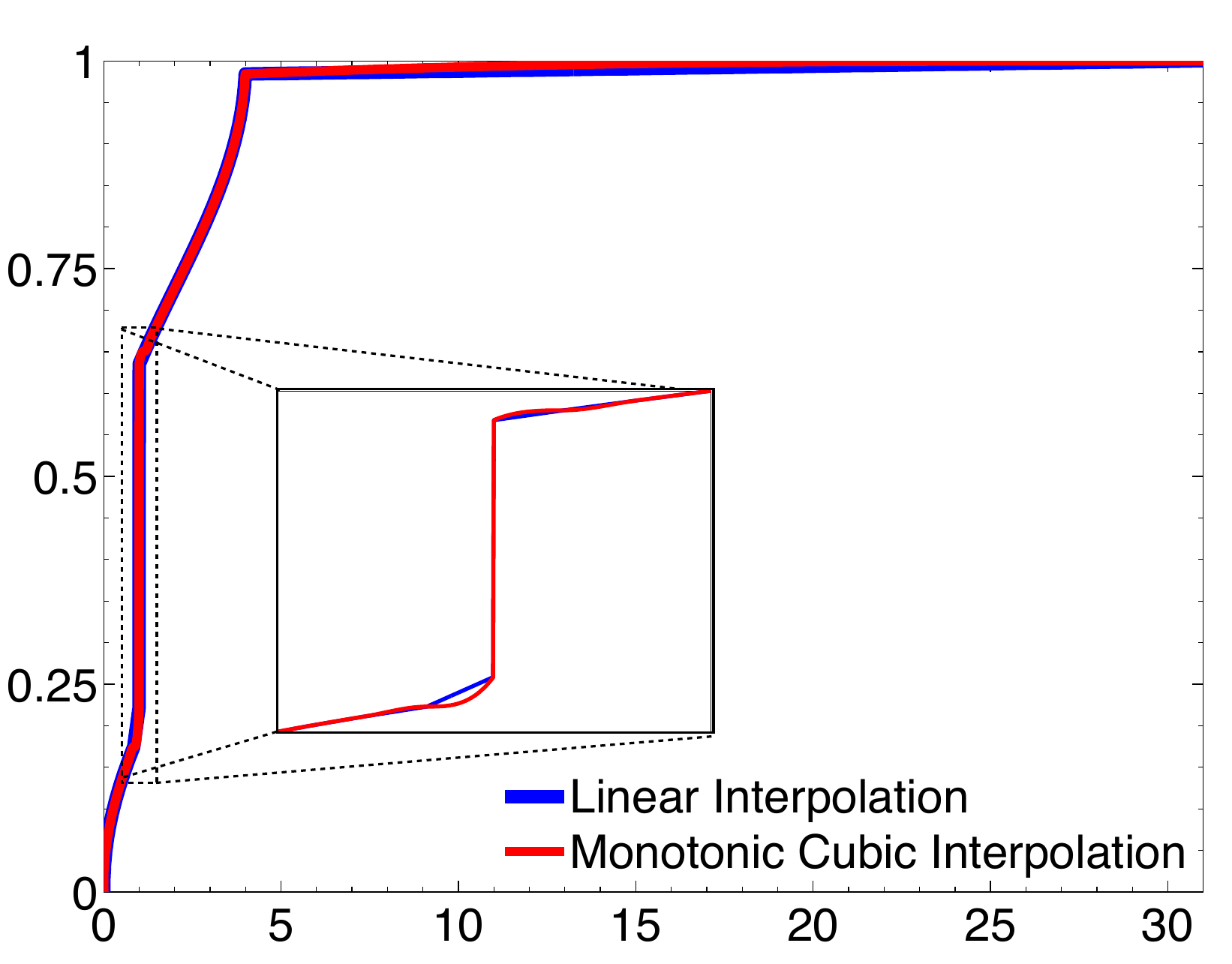}} 
\centerline{\small{~~~~$\lambda$}}
\centerline{\small{~~~~(c)}}
\end{minipage}  \\
\vspace{.1in}
\begin{minipage}[b]{.32\linewidth}
\centerline{\includegraphics[width=\linewidth]{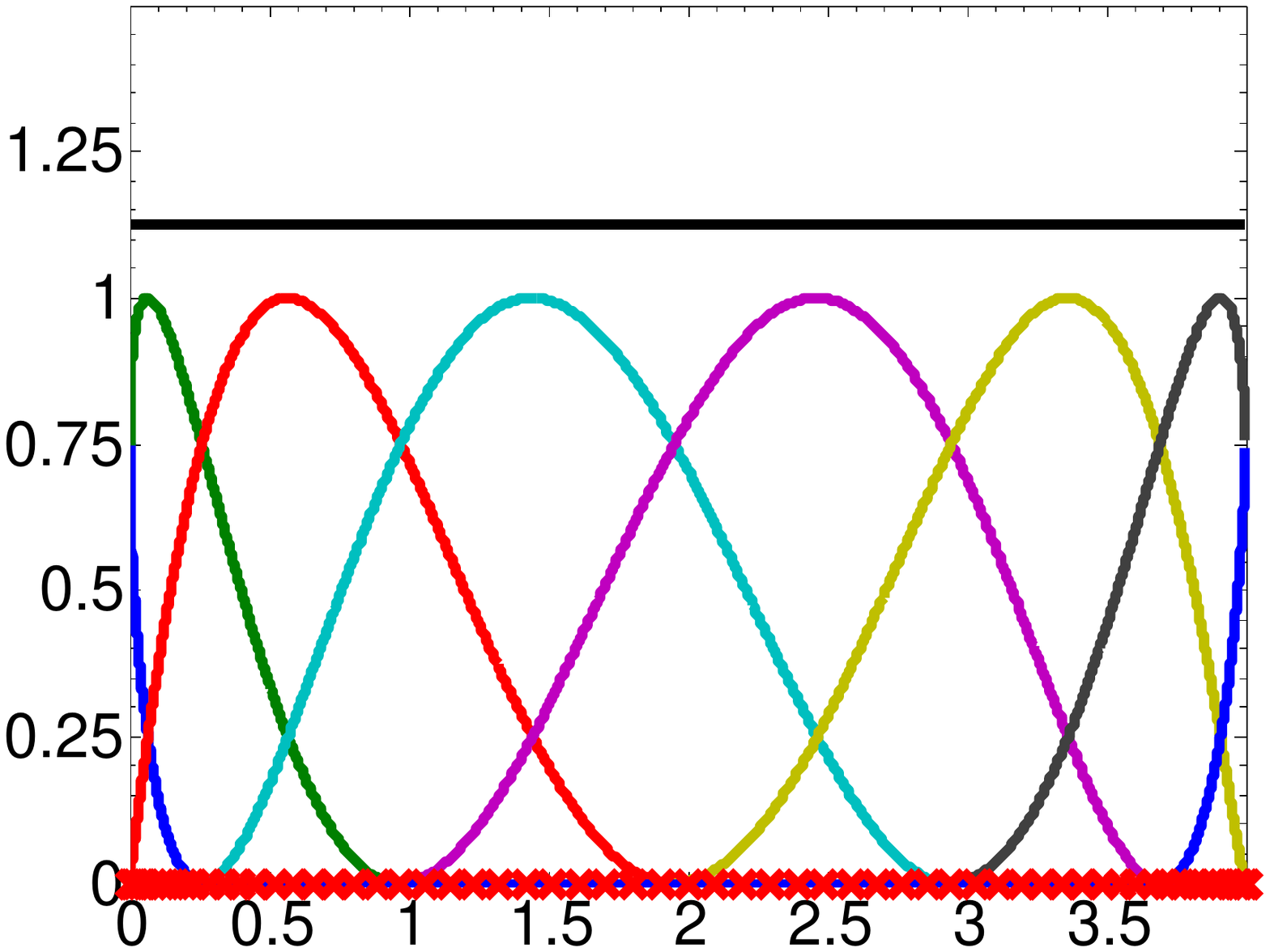}} 
\centerline{\small{~~~~$\lambda$}}
\centerline{\small{~~~~(d)}}
\end{minipage}
\hfill
\begin{minipage}[b]{.32\linewidth}
\centerline{\includegraphics[width=\linewidth]{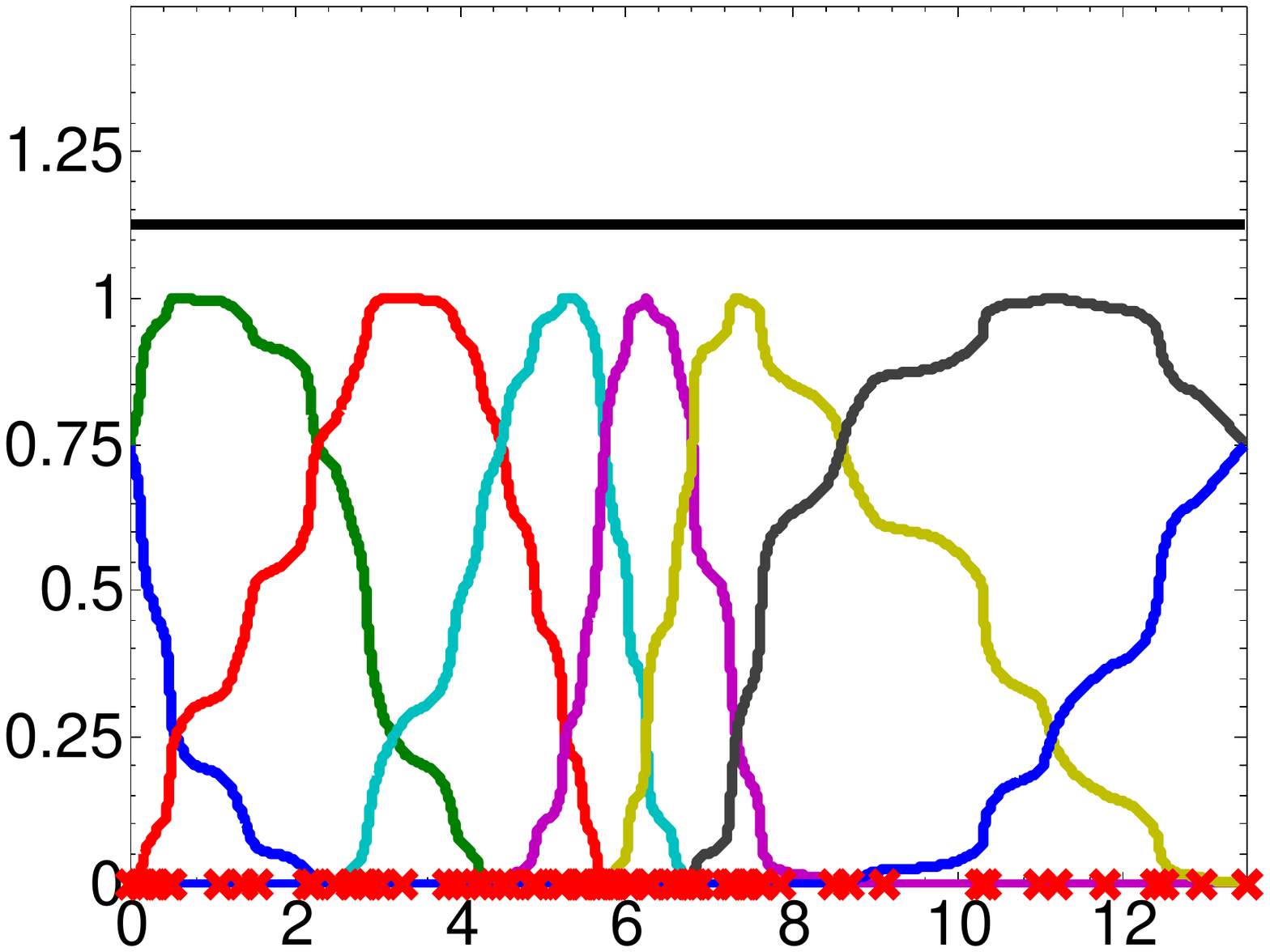}} 
\centerline{\small{~~~~$\lambda$}}
\centerline{\small{~~~~(e)}}
\end{minipage}
\hfill
\begin{minipage}[b]{.32\linewidth}
\centerline{\includegraphics[width=\linewidth]{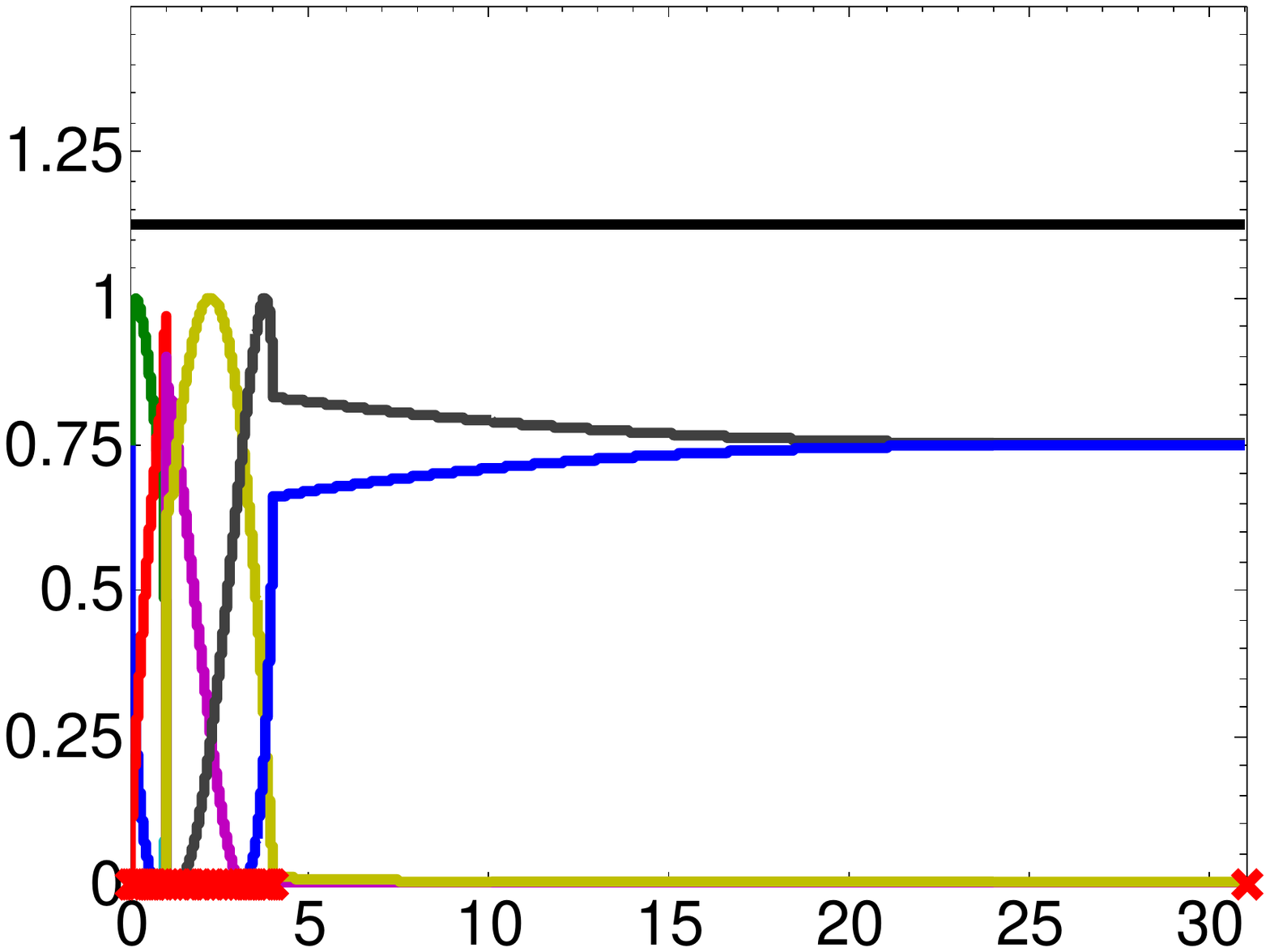}} 
\centerline{\small{~~~~$\lambda$}}
\centerline{\small{~~~~(f)}}
\end{minipage} \\
\vspace{.1in}
\begin{minipage}[b]{.32\linewidth}
\centerline{\includegraphics[width=\linewidth]{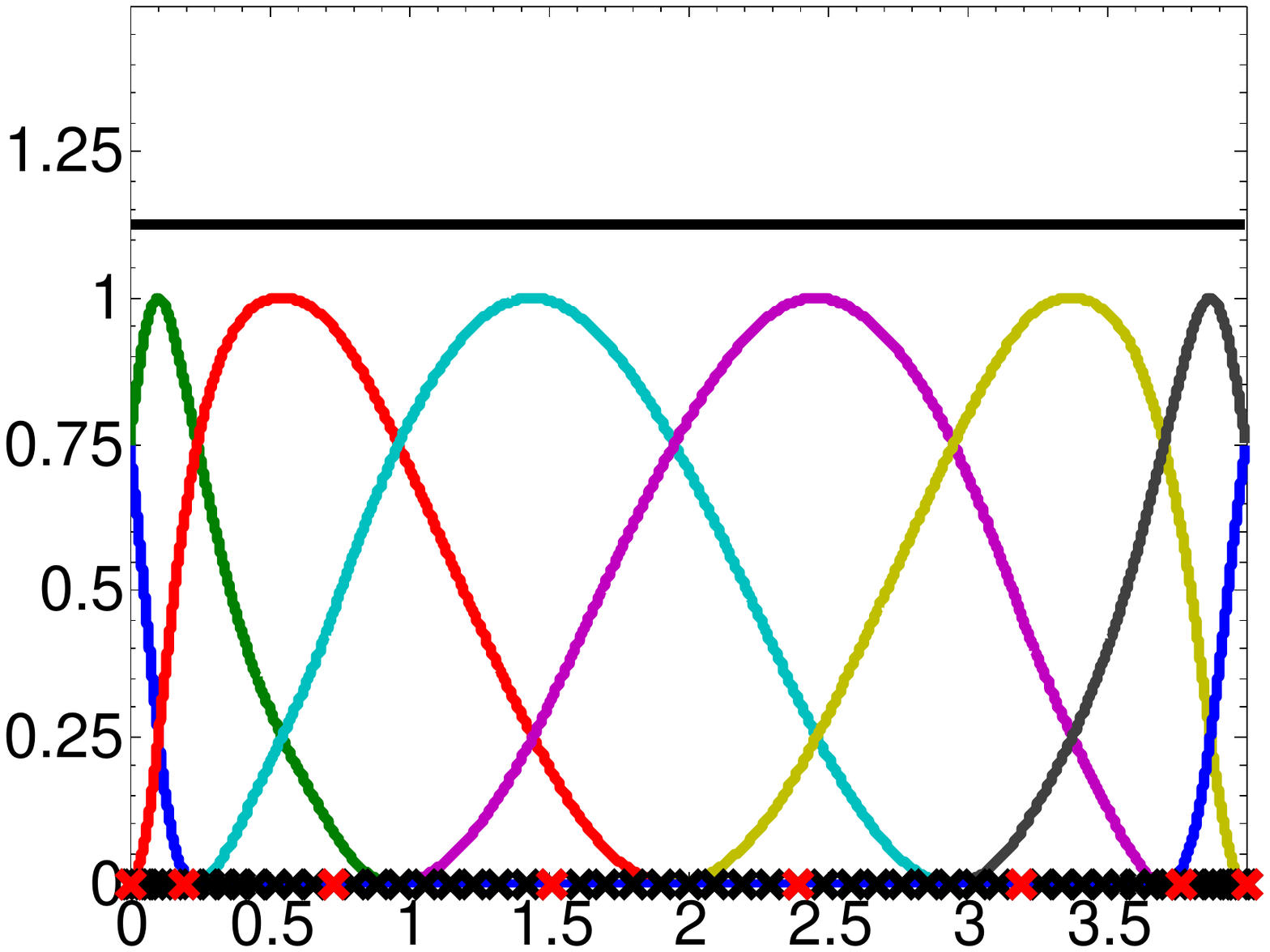}} 
\centerline{\small{~~~~$\lambda$}}
\centerline{\small{~~~~(g)}}
\end{minipage}
\hfill
\begin{minipage}[b]{.32\linewidth}
\centerline{\includegraphics[width=\linewidth]{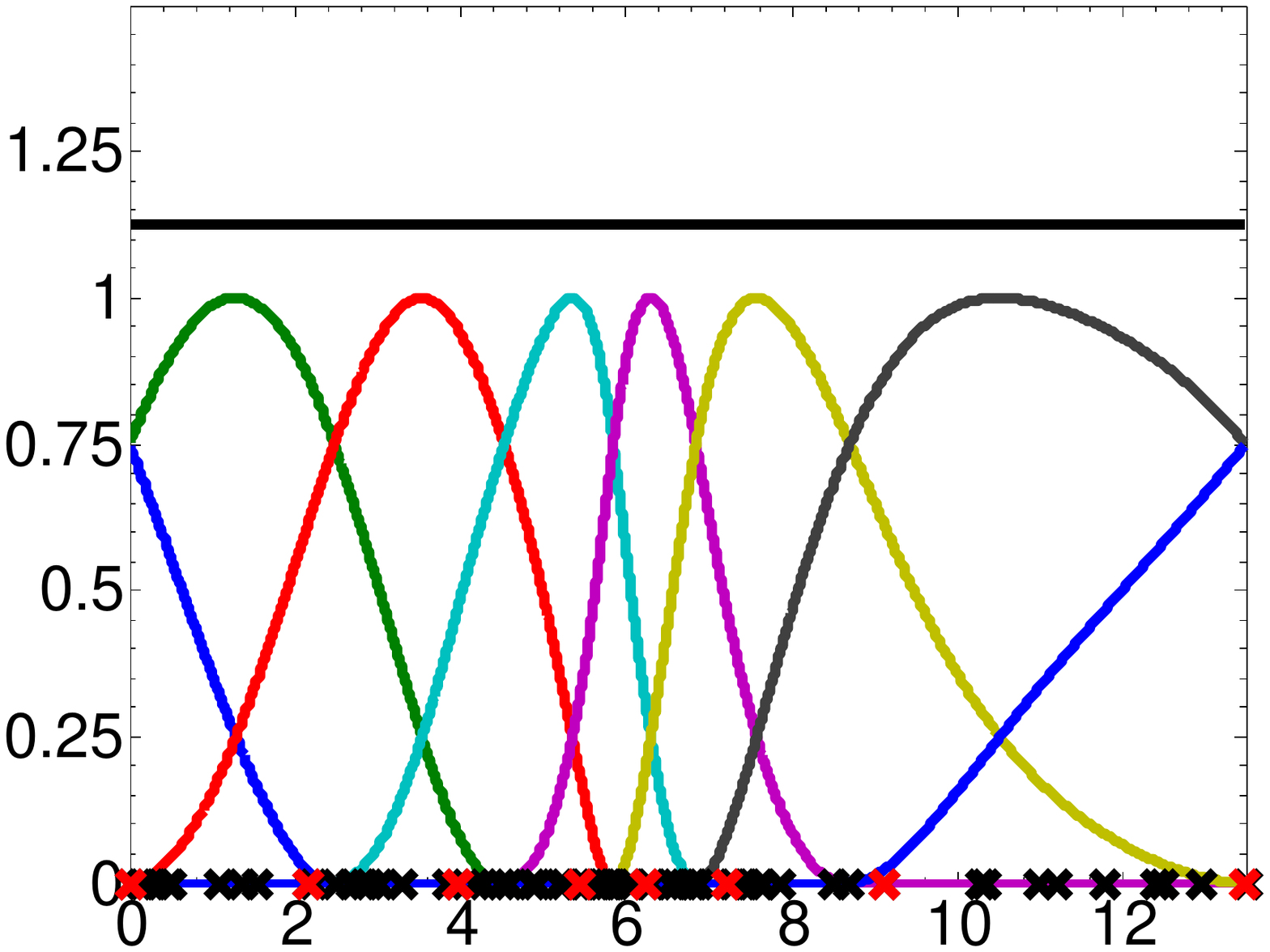}} 
\centerline{\small{~~~~$\lambda$}}
\centerline{\small{~~~~(h)}}
\end{minipage}
\hfill
\begin{minipage}[b]{.32\linewidth}
\centerline{\includegraphics[width=\linewidth]{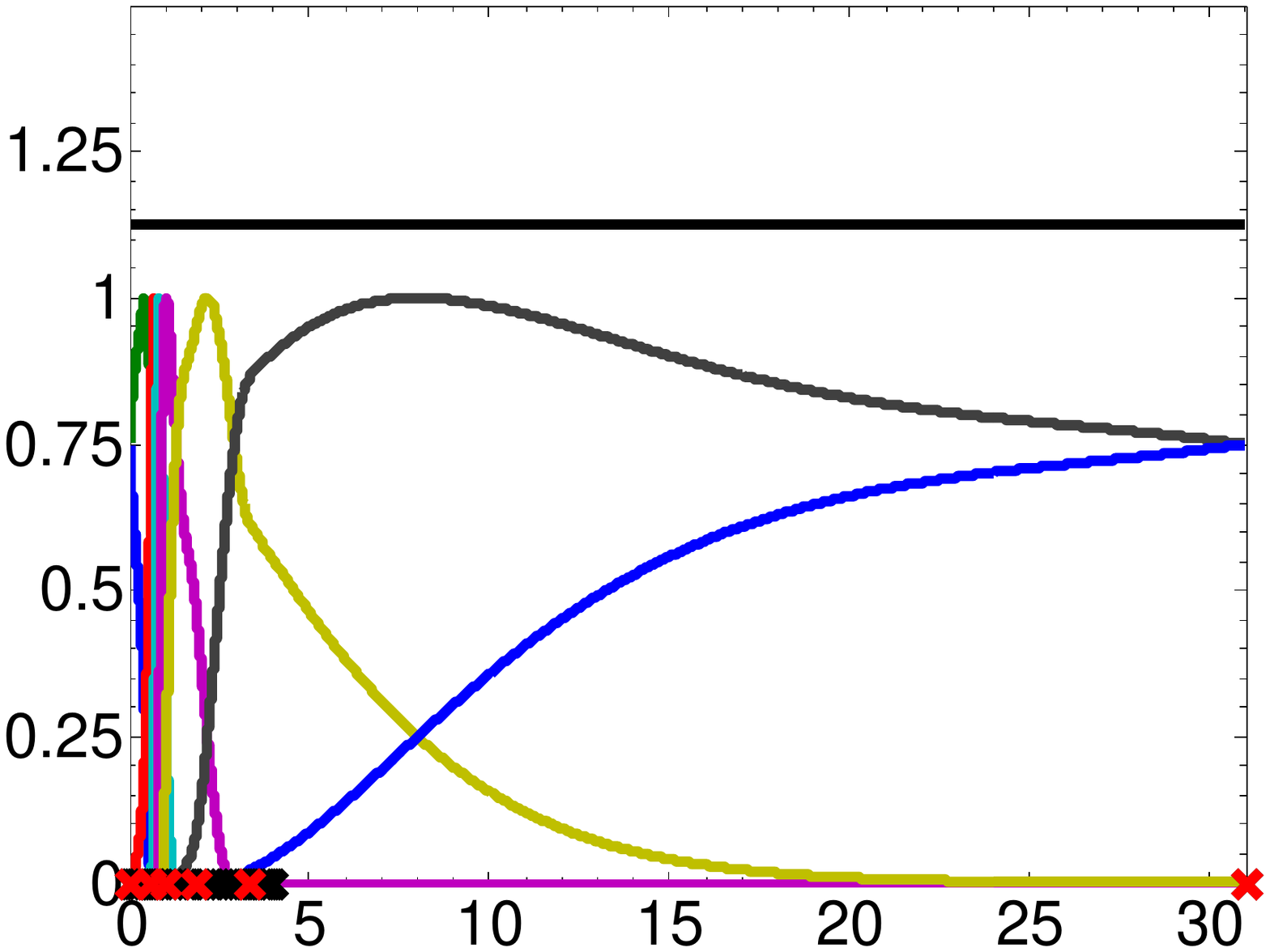}} 
\centerline{\small{~~~~$\lambda$}}
\centerline{\small{~~~~(i)}}
\end{minipage} 
\caption {(a)-(c) Spectrum-based warping functions, $\omega(\lambda)$, based on full knowledge of the graph Laplacian spectrum, $\sigma(\L)$. (d)-(f) Systems of warped filters, $\left\{\widehat{g_m}\right\}_{m=1,2,\ldots,8}$, where $\widehat{g_m}(\lambda)=\widehat{g_m^U}(\omega(\lambda))$, and $\omega(\lambda)$ are the spectrum-adapted warping functions constructed with monotonic cubic interpolation in (a)-(c).
(g)-(i) Systems of warped filters arising from a warping function generated by interpolating a subset of 8 of the 64 Laplacian eigenvalues. Specifically, we use the interpolation points $\left\{\left(\bar{\lambda}_{\l},\frac{\l}{\bar{N}-1}\right)\right\}_{\l=0,1,\ldots,\bar{N}-1}$, where $\bar{N}=8$ and $\bar{\sigma}(\L)=\left\{\bar{\lambda}_{\l}\right\}_{\l=0,1,\ldots,7}=\left\{\lambda_0,\lambda_9,\lambda_{18},\lambda_{27},\lambda_{36},\lambda_{45},\lambda_{54},\lambda_{63}\right\}$.
} 
 \label{Fig:pwl_mono}
\end{figure}
\vspace{-.4cm}
\begin{example} \label{Ex:pwl_mono_warp}
In Figure \ref{Fig:pwl_mono}(a)-(c), we show the warping functions generated by interpolating the points 
\eqref{Eq:interp_points}
with each of these two methods, for each of the graphs in Example \ref{Ex:eigen_loc}. We then show the resulting systems of spectrum-adapted warped filters in Figure \ref{Fig:pwl_mono}(d)-(f). 
We see that the warped filters are narrower where the eigenvalue density is higher -- each end of the spectrum for the path graph, the middle of the spectrum for the  
sensor network, and the very low end of the spectrum for the comet graph.
If we interpolate a subset of the Laplacian eigenvalues, there is not a huge discrepancy in the warping functions, and the resulting spectrum-adapted filters, shown in Figure \ref{Fig:pwl_mono}(g)-(i), are smoother.
\end{example}

\begin{remark} \label{Re:tight_comparison}
Interestingly, in the tight spectral graph wavelet frame construction of \cite{leonardi_multislice}, Leonardi and Van De Ville include a warping function of $\arccos\left(1-\frac{\lambda_{\l}}{d_{\max}}\right)$ in order that the resulting spectral graph wavelet on a ring graph coincides with the classical Meyer wavelet. When applied to the special case of a ring graph, our general warping method shown in Example \ref{Ex:pwl_mono_warp} almost exactly yields the warping function $\frac{1}{\pi}\arccos\left(1-\frac{\lambda_{\l}}{2}\right)$. However, as shown later in Example \ref{Ex:wavelet_comparison}, the two methods are quite different in general.
\end{remark}

\subsection{Approximation of the Cumulative Spectral Density Function} \label{Se:approx_spectrum}
From Example \ref{Ex:pwl_mono_warp}, we see that we do not need to compute all of the Laplacian eigenvalues to generate a warping function that provides a reasonable approximation of the cumulative spectral density function.
However, we are not aware of a scalable method to draw graph Laplacian eigenvalues according to the spectral density function of a high-dimensional graph Laplacian. For example, it is known that the Lanczos approximate eigendecomposition method does not accurately predict the density of the spectrum  \cite{kuijlaars}. In general, the problem of approximating the spectral density function of a large graph Laplacian matrix is an open question. 

For the purposes of this paper, we take a simple approach to this problem. Starting with an upper bound, $\lambda_{upper}$, on the Laplacian spectrum, we take $Q+1$ evenly spaced points on the interval $[0,\lambda_{upper}]$, and then compute the number of eigenvalues of $\L$ below each point using the spectrum slicing method of \cite[Section 3.3]{parlett}. Specifically, for every $q \in \{0,1,\ldots,Q\}$, we compute a triangular factorization $\L-\frac{q \lambda_{upper}}{Q} I = L_q \Delta_q L_q^*$, where $\Delta_q$ is a diagonal matrix and $L_q$ is a lower triangular matrix (not to be confused with the Laplacian matrix $\L$). By a corollary of Sylvester's law of inertia, the number of negative eigenvalues of the diagonal matrix $\Delta_q$ is equal to the number of  negative eigenvalues of $\L-\frac{q \lambda_{upper}}{Q} I$, and therefore equal to the number of eigenvalues of $\L$ less than $\frac{q \lambda_{upper}}{Q}$ \cite[Theorem 3.3.1]{parlett}. Finally, to form a smooth warping function that estimates the cumulative spectral density function, we once again use the monotonic cubic polynomial interpolation routine of \cite{fritsch} with the interpolation points $\left\{\left(\frac{q \lambda_{upper}}{Q},\frac{\mu_q}{N-1} \right)\right\}_{q=0,1,\ldots,Q}$, where $\mu_q$ is the number of diagonal elements of $\Delta_q$ less than zero.\footnote{We take $\mu_0=0$ and $\mu_{Q}=N-1$ without actually performing the triangular factorization for these two values of $q$.} 

To perform the triangular factorizations of $\L-\frac{q \lambda_{upper}}{Q} I$, we use the \texttt{MATLAB} LDL sparse Cholesky package, which is written by Timothy Davis and included in the SuiteSparse package \cite{ldl2}. 
To improve the computational times slightly, we modify the LDL package so that it performs the symbolic analysis step only once, since the result is the same for all $q$. We also include a sparsity preserving permutation using the \texttt{MATLAB} routine \texttt{symamd}. On a 1.8GHz Intel Core i7 MacBook Air laptop with 4GB of memory, for very sparse mesh-like graphs with mean degree 3 and the number of vertices $N$ equal to 50k, 200k, 500k, and 1m, the computation of the warping function using this method took approximately 1.5 seconds, 15 seconds, 2 minutes, and 5 minutes, respectively.

\section{Filters Adapted to Classes of Large Random Graphs}\label{Se:random}

In this section, rather than estimating the spectral density function for a deterministic graph, we consider classes of large random graphs for which the asymptotic (as the number of vertices $N$ goes to infinity) empirical spectral distribution of the graph Laplacian eigenvalues is known. For a given class of random graphs, the empirical spectral distribution of a random graph realization with $N$ vertices is a random measure; however, for certain classes of random graphs, the sequence of random measures for each possible $N$ actually converges to a deterministic probability measure \cite[Chapter 2.4]{tao_random_matrix}. We use these deterministic distributions (or some approximation of them) as the warping functions in \eqref{Eq:warp_def}.

The methods we develop in this section are not only useful when we are considering random graphs with known empirical spectral distributions; 
they are also potentially useful when 
dealing with a large deterministic graph whose structural properties (e.g., degree distribution, diameter, clustering coefficient) 
are similar to those of a particular class of random graphs. Then the 
empirical 
spectral distribution  
of that class of random graphs may be used as an approximation to the 
spectral density of the deterministic graph, in order to construct an appropriate warping function.

\subsection{Graph Laplacian Spectrum of Large Random Regular Graphs} \label{Se:random_regular}
The first class of random graphs we consider is  
that of random regular graphs. For integers $r$ and $N$ satisfying $3 \leq r < N$ and $rN$ is even, let $\G_{N,r}$ be the set of all unweighted graphs with $N$ vertices and with the degree of every vertex equal to $r$. A random regular graph is one chosen uniformly at random from the set $\G_{N,r}$ (see, e.g., \cite[Chapter 2.4]{bollobas} for more on random regular graphs).

The asymptotic behavior of the empirical spectral distribution of the graph Laplacian eigenvalues of a large random regular graph is given by the following result, which is commonly referred to as McKay's Law.
\begin{theorem}[McKay \cite{mckay}]
In the limit as the number of vertices $N$ goes to infinity, the empirical spectral distribution, $p_{\lambda,N}^{RR}(s)$ of the graph Laplacian eigenvalues of a large random regular graph with degree $r$ converges in probability to the deterministic probability  density function
\begin{align}\label{Eq:rr_density}
p_{\lambda,\infty}^{RR}(s):=\lim_{N \rightarrow \infty} p_{\lambda,N}^{RR}(s)=\frac{r\sqrt{4(r-1)-(s-r)^2}}{2\pi \left(r^2-(s-r)^2\right)}\Identity_{\left\{r-2\sqrt{r-1} \leq s \leq r+2\sqrt{r-1}\right\}},
\end{align}
and the empirical spectral cumulative distribution, $P_{\lambda,N}^{RR}(z)$, converges in probability to
\begin{align*}
P_{\lambda,\infty}^{RR}(z)&=\int_{0}^{z} p_{{\lambda},\infty}^{RR}(s)~ds \\
&=
\begin{cases}
0,&\hbox{ if } 0\leq z < r-2\sqrt{r-1} \\
\left[
\begin{array}{l}
\frac{1}{2} + \frac{r}{2\pi}\arcsin \left(\frac{z-r}{2\sqrt{r-1}}\right) \\
-\frac{r-2}{2\pi} \arctan \left(\frac{(r-2)(z-r)}{r\sqrt{4(r-1)-(z-r)^2}} \right)
\end{array}
\right]  
,&\hbox{ if } r-2\sqrt{r-1} \leq z < r+2\sqrt{r-1} \\
1, &\hbox{ if } r+2\sqrt{r-1} \leq z
\end{cases}~.
\end{align*}
\end{theorem}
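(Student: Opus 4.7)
The proof proceeds in two stages: reduce the statement about the Laplacian spectrum to the classical statement about the adjacency spectrum, then integrate the limiting density to obtain the CDF.

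Since the graph is $r$-regular, the combinatorial Laplacian satisfies $\L = rI - A$, where $A$ is the adjacency matrix. Hence the Laplacian eigenvalues are $\{r - \mu_\l\}$ for adjacency eigenvalues $\{\mu_\l\}$, and $p_{\lambda,N}^{RR}$ is the push-forward of the empirical spectral distribution of $A$ under the continuous map $\mu \mapsto r - \mu$. I would then invoke McKay's classical theorem for the adjacency spectrum of a uniformly random $r$-regular graph, which asserts convergence in probability of the adjacency empirical spectral distribution to the Kesten--McKay density on $[-2\sqrt{r-1},\,2\sqrt{r-1}]$. The standard proof of that theorem is by the method of moments: one computes $\Expectation\!\left[\tfrac{1}{N}\mathrm{tr}(A^k)\right]$, which counts expected closed walks of length $k$; as $N \to \infty$ only walks whose underlying multigraph is a tree contribute, leaving the counts of non-backtracking closed walks on the infinite $r$-regular tree. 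These coincide with the moments of the Kesten--McKay density, whose compact support makes it moment-determined. Since push-forward by a continuous deterministic map preserves convergence in probability, one immediately obtains the claimed density $p_{\lambda,\infty}^{RR}$ on the shifted support $[r - 2\sqrt{r-1},\,r + 2\sqrt{r-1}]$, and the convergence statement for $P_{\lambda,N}^{RR}$ then follows from convergence of CDFs at continuity points of the limit.

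The remaining work is to evaluate the integral $\int_{0}^{z} p_{\lambda,\infty}^{RR}(s)\,ds$ in closed form. I would substitute $u = s - r$ and exploit the partial fraction identity
\begin{align*}
\frac{r}{r^2 - u^2} = \frac{1}{2}\!\left(\frac{1}{r - u} + \frac{1}{r + u}\right),
\end{align*}
splitting the integral into two pieces of the form $\int \frac{\sqrt{4(r-1) - u^2}}{r \pm u}\,du$. Each of these is evaluated by the trigonometric substitution $u = 2\sqrt{r-1}\sin\theta$, yielding one $\arcsin$ contribution and one $\arctan$ contribution per piece. The two $\arcsin$ contributions coalesce into the $\tfrac{r}{2\pi}\arcsin\!\left(\tfrac{z-r}{2\sqrt{r-1}}\right)$ term, while the two $\arctan$ contributions combine into the stated $-\tfrac{r-2}{2\pi}\arctan\!\left(\tfrac{(r-2)(z-r)}{r\sqrt{4(r-1)-(z-r)^2}}\right)$ term. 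The constant $\tfrac{1}{2}$ is fixed by enforcing $P_{\lambda,\infty}^{RR}(r - 2\sqrt{r-1}) = 0$.

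The main technical obstacle is the algebraic simplification in the $\arctan$ piece: the two pieces produce $\arctan$s with different arguments, and collapsing them into the single ratio $\tfrac{(r-2)(z-r)}{r\sqrt{4(r-1)-(z-r)^2}}$ requires the tangent addition formula together with careful attention to branch conventions when $(z-r)$ changes sign. A convenient sanity check at both endpoints uses $\arcsin(\pm 1) = \pm \pi/2$ and $\arctan(\pm \infty) = \pm \pi/2$: at $z = r - 2\sqrt{r-1}$ one gets $\tfrac{1}{2} - \tfrac{r}{4} + \tfrac{r-2}{4} = 0$, and at $z = r + 2\sqrt{r-1}$ one gets $\tfrac{1}{2} + \tfrac{r}{4} - \tfrac{r-2}{4} = 1$, confirming the closed form.
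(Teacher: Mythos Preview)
Your sketch is reasonable and the endpoint checks confirm the closed-form CDF, but there is nothing in the paper to compare it against: the paper does not prove this theorem. It is stated as a cited result (attributed to McKay) and used as input for constructing the warping function $\omega^{RR}$; the Appendix contains proofs only of Lemma~\ref{Le:frame}, Theorem~\ref{Th:uniform_translates}, and Corollary~\ref{Co:undilated_translates}. So your reduction $\L = rI - A$ followed by an appeal to the Kesten--McKay law for the adjacency spectrum, together with the explicit integration via $u = s - r$ and partial fractions, goes well beyond anything the authors attempt---they simply take both the density and the CDF formula as given.
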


Now, given a large but finite random regular graph with $N$ vertices and degree $r$, we (i) compute an upper bound $\lambda_{upper}$ on $\lambda_{\max}$, either via the power method or by using a simple upper bound such as \cite{anderson_morley}
\begin{align*}
\lambda_{\max}\leq \max_{i \sim j}\left\{d_i+d_j\right\}=2r\hbox{ for a random regular graph};
\end{align*} 
(ii) take the warping function to be the empirical spectral cumulative distribution on the interval $\left[0,\lambda_{upper}\right]$:
\begin{align}\label{Eq:rr_warp}
\omega^{RR}(z):=P_{\lambda,\infty}^{RR}(z),~z\in\left[0,\lambda_{upper}\right];
\end{align}
and (iii) take the warped filters to be of the form \eqref{Eq:warp_def}, with $\gamma=\omega^{RR}(\lambda_{upper})=1$ for the design of the uniform translates.\footnote{Note that for a random regular graph with a finite number of vertices, $\lambda_{\max}$ may be greater than $r+2\sqrt{r-1}$. Thus, in order to ensure that $\omega^{RR}(\cdot)$ is well-defined on the entire spectrum $\sigma(\L)$ of the random graph realization, we need to only restrict the empirical spectral cumulative distribution to $\left[0,\lambda_{upper}\right]$, rather than $\left[0,r+2\sqrt{r-1}\right]$.}

\begin{example}
We choose a realization from the class of random regular graphs with $N=3000$ vertices and degree $r=3$, and take $\lambda_{upper}=2r=6$. In Figure \ref{Fig:rr}, we compare the normalized histogram of the graph Laplacian eigenvalues to the expected asymptotic spectral density, $p_{\lambda,\infty}^{RR}(s)$, for large $N$, and plot the warping function, $\omega^{RR}(\lambda)$, and warped system of filters. Note that while the warping function is adapted to the class of random regular graphs with degree 3, the actual graph Laplacian eigenvalues shown in Figure \ref{Fig:rr}(a) are not used to construct the warping function in Figure \ref{Fig:rr}(b) or the warped filters in Figure \ref{Fig:rr}(c). Therefore, we can apply the same set of filters to any realization of a random regular graph of degree 3 with a larger number of vertices $N$ without running into scalability issues.
\begin{figure}[h]
\centering
\begin{minipage}[b]{.32\linewidth}
\centerline{~~~Empirical Spectral Distribution}
\centerline{\includegraphics[width=\linewidth]{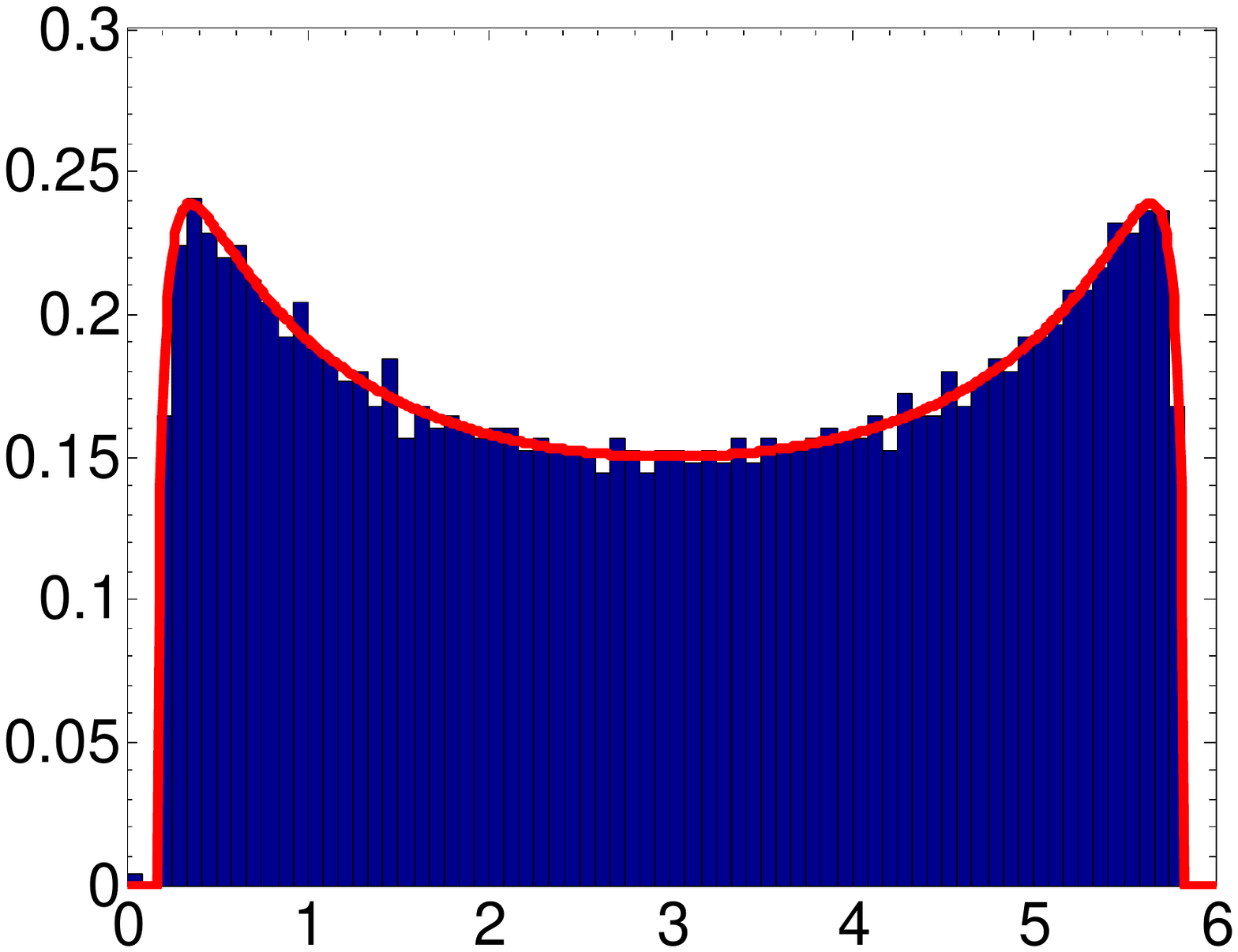}} 
\centerline{\small{~~~$\lambda$}}
\centerline{\small{~~~(a)}}
\end{minipage}
\hfill
\begin{minipage}[b]{.318\linewidth}
\centerline{~~~Warping Function}
\centerline{\includegraphics[width=\linewidth]{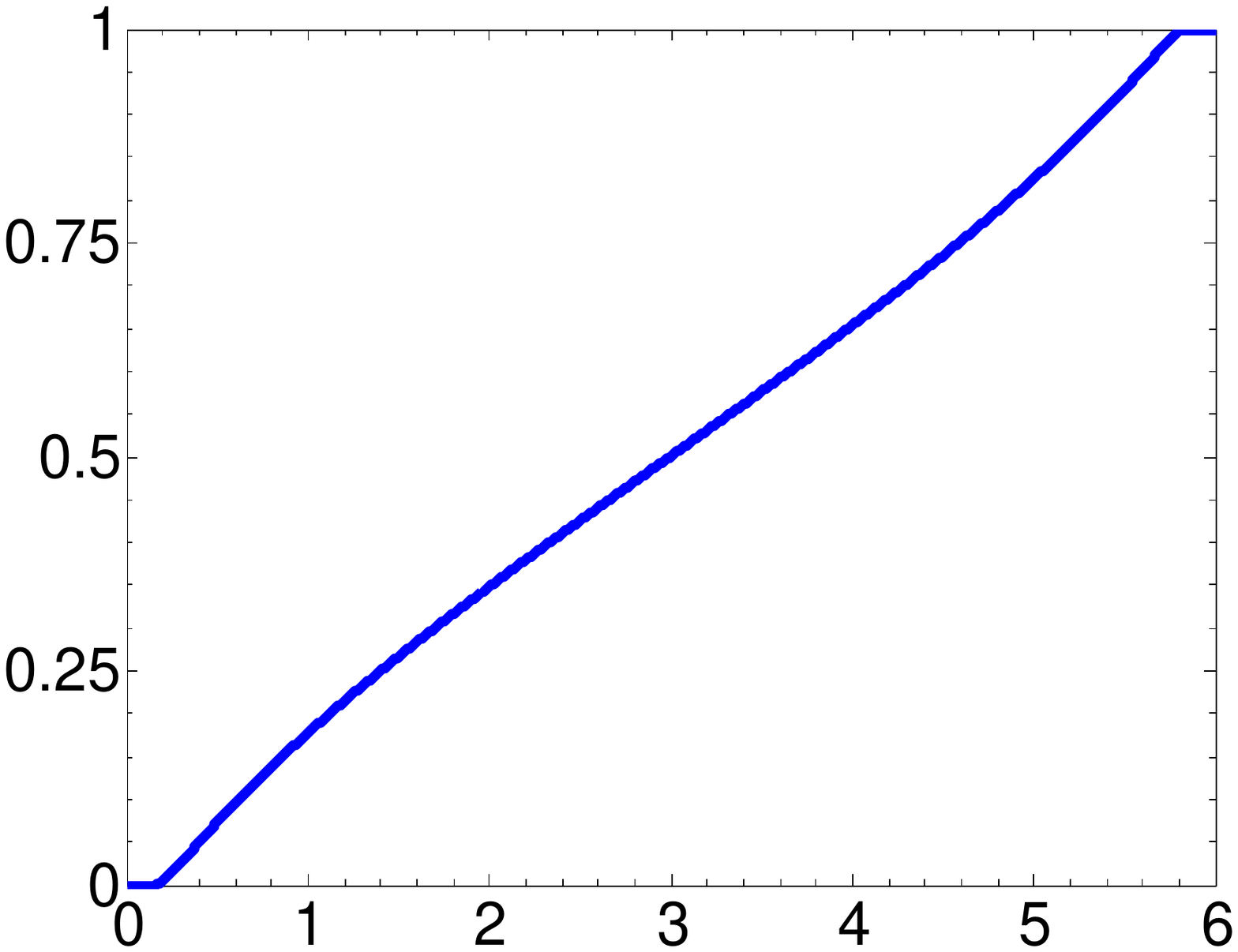}} 
\centerline{\small{~~~$\lambda$}}
\centerline{\small{~~~(b)}}
\end{minipage}
\hfill
\begin{minipage}[b]{.32\linewidth}
\centerline{~~~Warped Filters}
\centerline{\includegraphics[width=\linewidth]{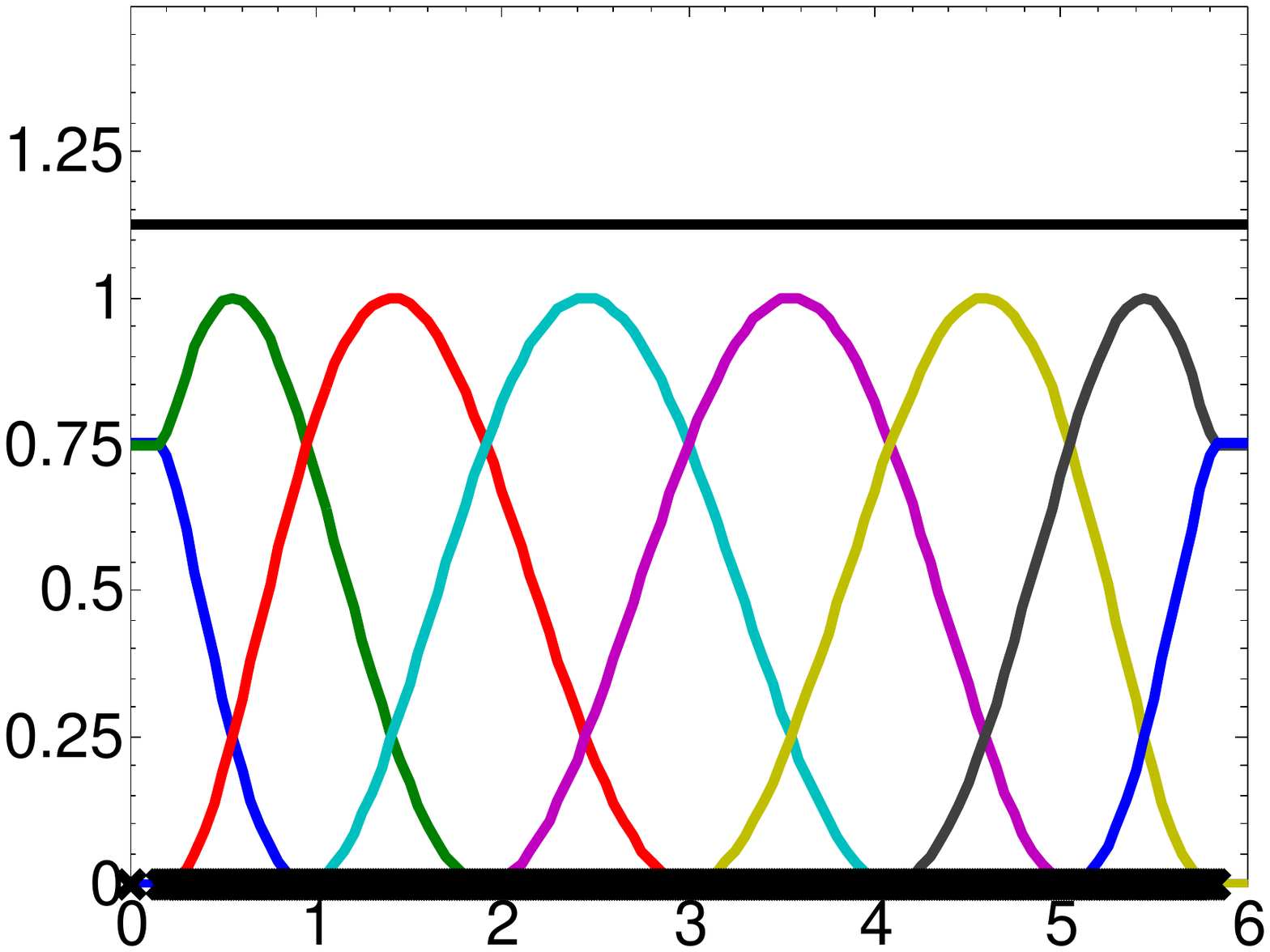}} 
\centerline{\small{~~~$\lambda$}}
\centerline{\small{~~~(c)}}
\end{minipage} 
\caption {Construction of a system of filters adapted to the graph Laplacian spectrum of the class of random regular graphs with degree $r=3$. (a) The normalized histogram of the graph Laplacian eigenvalues of a single realization of a random regular graph with $N=3000$ vertices, compared to the asymptotic empirical spectral distribution $p_{\lambda,\infty}^{RR}(s)$ in \eqref{Eq:rr_density}. (b) The warping function $\omega^{RR}(\lambda)$ defined in \eqref{Eq:rr_warp}. (c) The resulting system of warped filters. The black marks on the horizontal axis represent the eigenvalues of the single realization. 
While the filters are not adapted to that specific realization, they are narrower in the regions of the spectrum where the eigenvalue density is higher.} 
 \label{Fig:rr}
\end{figure}
\end{example}

\subsection{Graph Laplacian Spectrum of Erd\H{o}s-R{\'e}nyi Random Graphs}
In the Erd\H{o}s-R{\'e}nyi $G(N,p)$ random graph model \cite{erdos_renyi}, \cite[Chapter 5]{chung_complex}, an edge connects each possible pair of the $N$ vertices with probability $p$, with $0<p<1$; that is, for all $i,j \in \{1,2,\ldots,N\}$ with $i \neq j$, 
\begin{align*}
W_{ij}=W_{ji}=
\begin{cases}
1,&\hbox{with probability } p \\
0,&\hbox{with probability } 1-p
\end{cases}
,
\end{align*}
independently of $W_{i^{\prime}j^{\prime}}$ for $(i^{\prime},j^{\prime}) \neq (i,j)$. The following theorem of Ding and Jiang characterizes the asymptotic empirical spectral cumulative distribution of the graph Laplacian eigenvalues of Erd\H{o}s-R{\'e}nyi random graphs.
\begin{theorem}[Ding and Jiang, Theorem 2, \cite{ding}]
In the limit as the number of vertices $N$ goes to infinity, 
with probability one, 
the shifted and scaled empirical spectral cumulative distribution 
\begin{align}\label{Eq:ER_shifted_scaled}
\bar{P}_{{\lambda},N}^{ER}(z):=\frac{1}{N} \sum_{\l=0}^{N-1} \Identity_{\left\{\frac{\lambda_{\l}-pN}{\sqrt{pN(1-p)}}\leq z\right\}}
\end{align}
of the graph Laplacian eigenvalues of a large random  Erd\H{o}s-R{\'e}nyi graph with edge probability $p$ converges weakly to the  
measure $\mu=\mu_A \boxplus \mu_B$,  
the free additive convolution\footnote{For more details about free probability theory and the free additive convolution, see \cite[Chapter 2.5]{tao_random_matrix} or \cite{voiculescu}.} of the standard normal distribution with density
\begin{align*}
d\mu_A:=\frac{1}{\sqrt{2\pi}}e^{\frac{-x^2}{2}}~dx,
\end{align*} 
and the semi-circular distribution with 
density
\begin{align}\label{Eq:semicircular}
d\mu_B:=\frac{1}{2\pi}\sqrt{4-x^2}~\Identity_{\left\{-2 \leq x \leq 2\right\}}~dx.
\end{align} 
\end{theorem}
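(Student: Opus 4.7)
The plan is to decompose the shifted and scaled Laplacian into a diagonal ``degree-fluctuation'' piece and a Wigner-type ``adjacency'' piece, identify the spectral limit of each, and then invoke asymptotic freeness to combine them into $\mu_A \boxplus \mu_B$. Writing $\L = D - W$ with $D$ the diagonal degree matrix and $W$ the adjacency matrix, set
\begin{align*}
M_N := \frac{\L - pN I}{\sqrt{pN(1-p)}} = A_N - B_N,
\end{align*}
where $A_N := (D - pNI)/\sqrt{pN(1-p)}$ is diagonal and $B_N := W/\sqrt{pN(1-p)}$.

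First I would analyze the two summands separately. Each $D_{ii}$ is $\mathrm{Binomial}(N-1,p)$, so by the classical CLT every diagonal entry of $A_N$ is marginally asymptotically $\mathcal{N}(0,1)$; the entries are only mildly coupled through the shared edge variables $W_{ij}$, and a second-moment computation shows that the empirical distribution of the $N$ diagonal entries — which, since $A_N$ is diagonal, coincides with its empirical spectral distribution — converges in probability to $\mu_A$. For the off-diagonal block, subtract the mean to form $\tilde B_N := B_N - \mathbb{E} B_N$. Then $\tilde B_N$ is a Wigner matrix with i.i.d.\ (up to symmetry) mean-zero entries of variance $1/N$, so Wigner's theorem yields convergence of its empirical spectral distribution to $\mu_B$. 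The correction $\mathbb{E} B_N = p(J-I)/\sqrt{pN(1-p)}$ is a rank-one plus multiple-of-identity perturbation: the identity piece shifts all eigenvalues by a vanishing constant $-p/\sqrt{pN(1-p)}$, while the $J$ piece produces a single outlier eigenvalue of order $\sqrt{N}$ that does not affect weak convergence. Finally, $\mu_B$ is symmetric about $0$, so replacing $-B_N$ by $\tilde B_N$ (up to the negligible pieces above) is harmless.

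The decisive and hardest step is to show that $A_N$ and $\tilde B_N$ are \emph{asymptotically free}, despite being manufactured from the very same Bernoulli variables $W_{ij}$. My approach would be to couple $A_N$ with an \emph{independent} Gaussian diagonal matrix $A_N'$ having i.i.d.\ $\mathcal{N}(0,1)$ entries, and to show via a resolvent perturbation (or cumulant expansion) argument that the normalized traces $\frac{1}{N}\mathrm{tr}(A_N + \tilde B_N - zI)^{-1}$ and $\frac{1}{N}\mathrm{tr}(A_N' + \tilde B_N - zI)^{-1}$ agree in probability as $N \to \infty$ for every $z$ off the real axis. Each edge variable $W_{ij}$ contributes only $O(1/\sqrt{N})$ to the two diagonal entries $(A_N)_{ii}$ and $(A_N)_{jj}$ that share it, so the dependence between $A_N$ and $\tilde B_N$ is weak at the level of individual entries; the challenge is to control this dependence uniformly in $z$ and simultaneously across all $\binom{N}{2}$ edges, which is where I expect essentially all of the technical work to lie.

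Once decoupled, Voiculescu's asymptotic freeness theorem (an independent diagonal matrix with i.i.d.\ entries is asymptotically free from an independent Wigner matrix) gives that the empirical spectral distribution of $A_N' + \tilde B_N$ converges to $\mu_A \boxplus \mu_B$, so combining with the previous steps yields $\bar P_{\lambda,N}^{ER} \Rightarrow \mu_A \boxplus \mu_B$ in probability. A standard upgrade via the Hoffman--Wielandt inequality (the empirical spectral measure is Lipschitz in the matrix entries) together with concentration for Lipschitz functions of the independent Bernoullis $W_{ij}$ then promotes convergence in probability to the almost-sure statement claimed in the theorem.
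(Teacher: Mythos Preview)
The paper does not prove this theorem at all: it is stated as a citation of Ding and Jiang \cite{ding}, Theorem 2, and used as a black box to build the warping function $\omega^{ER}$. There is therefore no ``paper's own proof'' to compare against.

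That said, your outline is the right architecture for the Ding--Jiang argument: the decomposition $M_N = A_N - B_N$ into the centered-degree diagonal and the scaled adjacency is exactly how the result is organized, and the identification of the limits $\mu_A$ (Gaussian, via the CLT for the degrees) and $\mu_B$ (semicircle, via Wigner) is correct. You have also correctly isolated the crux: $A_N$ and $B_N$ are built from the \emph{same} Bernoulli edge variables, so asymptotic freeness is not a direct quotation of Voiculescu's theorem for independent ensembles. Ding and Jiang handle this by a combinatorial moment method---they expand $\mathbb{E}\,\frac{1}{N}\mathrm{tr}(M_N^k)$, classify the contributing index paths, and show the surviving terms match the free cumulants of $\mu_A \boxplus \mu_B$---rather than by the resolvent-decoupling route you sketch. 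Your coupling idea (replace $A_N$ by an independent Gaussian diagonal $A_N'$ and control the resolvent difference) is plausible in spirit, but as written it is only a plan: the ``$O(1/\sqrt{N})$ per entry, so the dependence is weak'' heuristic is not yet an argument, and making it one would require essentially the same bookkeeping that the moment method does explicitly. If you want to carry this through, either adopt the moment method directly or make the Lindeberg-type swap between $A_N$ and $A_N'$ quantitative at the level of the Stieltjes transform.
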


Given a large but finite Erd\H{o}s-R{\'e}nyi random graph with $N$ vertices and edge probability $p$, we can approximate the empirical spectral cumulative distribution by rearranging \eqref{Eq:ER_shifted_scaled} to get
\begin{align}\label{Eq:ER_cdf}
P_{{\lambda},N}^{ER}(z):=\sqrt{\frac{1}{pN(1-p)}}~\mu\left(\left(-\infty,\frac{z-pN}{\sqrt{pN(1-p)}}\right]\right)
=\sqrt{\frac{1}{pN(1-p)}}~\int_{-\infty}^{z} d\mu\left(\frac{s-pN}{\sqrt{pN(1-p)}}\right),
\end{align}
and then proceed as in Section \ref{Se:random_regular} with $\omega^{ER}(z):=P_{\lambda,N}^{ER}(z)$ for $z\in\left[0,\lambda_{upper}\right]$.
We should comment on a few technical issues. First, as mentioned earlier, for a fixed $N$, the empirical spectral cumulative distribution is a random measure, while the sequence of distributions converges to a deterministic measure asymptotically as $N$ increases. Nonetheless, we are taking the deterministic approximation \eqref{Eq:ER_cdf} as the warping function. Second, in general, computing free convolutions is non-trivial. In Example \ref{Ex:er} below, we use the numerical computational method presented in \cite{olver} to compute the density $d\mu$ in \eqref{Eq:ER_cdf}. Third, the support of the density function of the free convolution of the standard normal distribution and the semi-circular distribution is the entire real line. Therefore, unlike the case of the random regular graph above, $\omega^{ER}(0)$ is not exactly equal to zero; however, for large $N$, it is quite small (e.g., on the order of $\frac{1}{1000}$ for Example \ref{Ex:er} below). Another consequence of the non-compact support of $p_{{\lambda},N}^{ER}(s)$ is that we cannot choose a strict upper bound $\lambda_{upper}$. Rather, for any given $\epsilon>0$, we can choose a $\lambda_{upper}$ such that the probability that an eigenvalue is bigger than $\lambda_{upper}$ is less than $\epsilon$.

\begin{example} \label{Ex:er}
We choose a realization from the class of Erd\H{o}s-R{\'e}nyi random graphs with $N=3000$ vertices and edge probability $p=0.05$, and take $\lambda_{upper}=pN+4\sqrt{pN(1-p)}=197.75$. In Figure \ref{Fig:er}, we compare the normalized histogram of the graph Laplacian eigenvalues to $p_{\lambda,N}^{ER}(s)$
and plot the warping function and warped system of filters.

\begin{figure}[h]
\centering
\begin{minipage}[b]{.325\linewidth}
\centerline{~~~Empirical Spectral Distribution}
\centerline{\includegraphics[width=\linewidth]{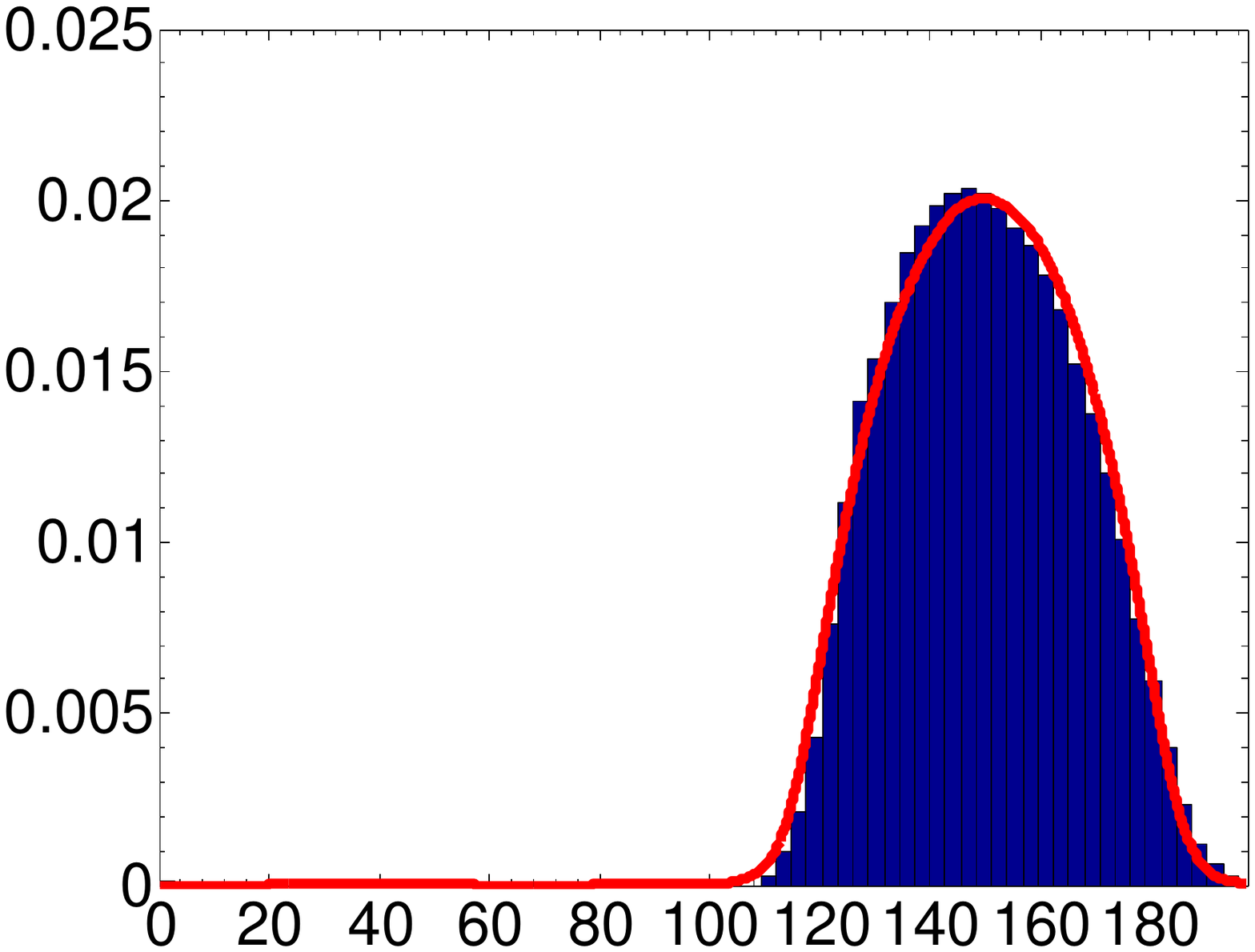}} 
\centerline{\small{~~~$\lambda$}}
\centerline{\small{~~~(a)}}
\end{minipage}
\hfill
\begin{minipage}[b]{.318\linewidth}
\centerline{~~~Warping Function}
\centerline{\includegraphics[width=\linewidth]{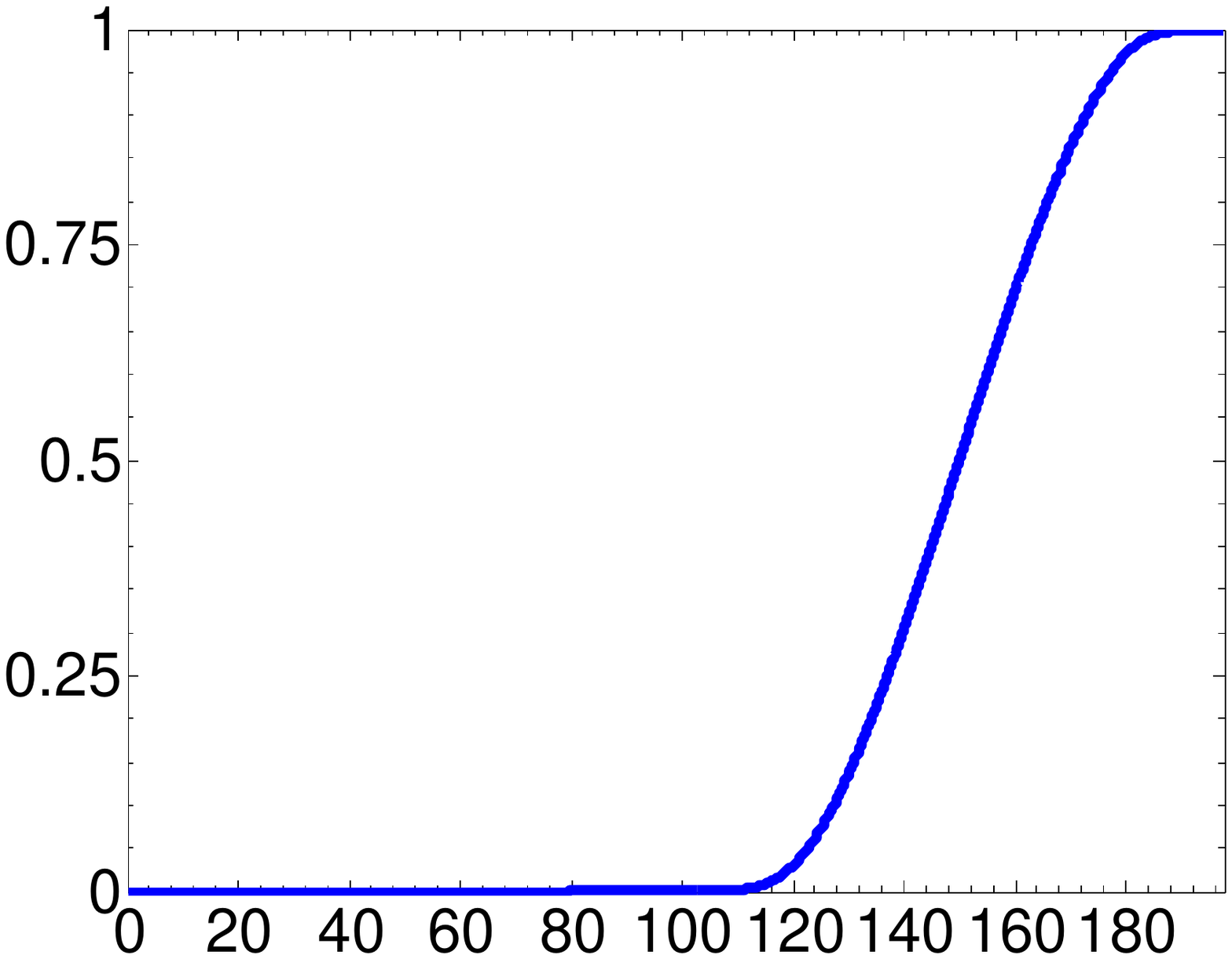}} 
\centerline{\small{~~~$\lambda$}}
\centerline{\small{~~~(b)}}
\end{minipage}
\hfill
\begin{minipage}[b]{.322\linewidth}
\centerline{~~~Warped Filters}
\centerline{\includegraphics[width=\linewidth]{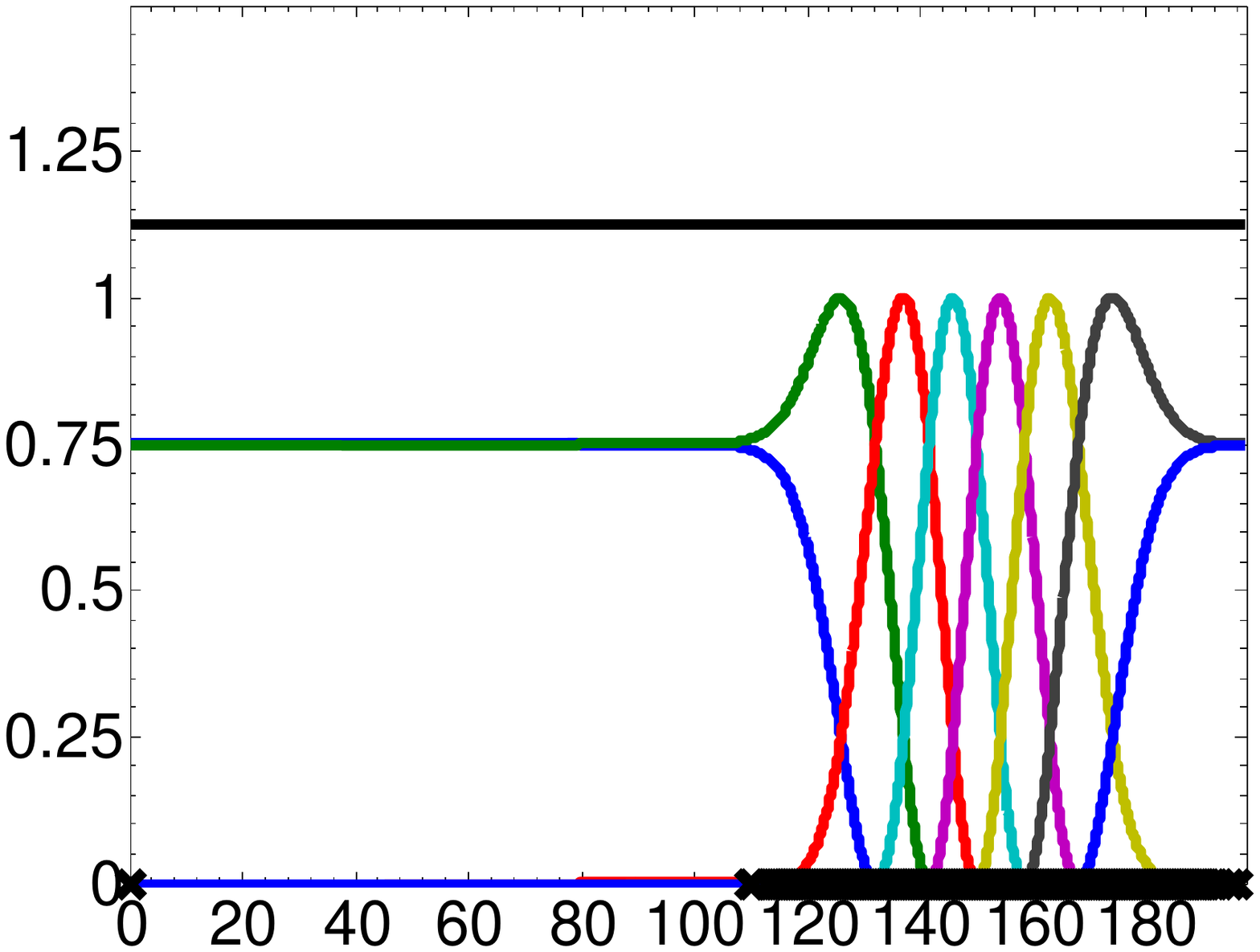}} 
\centerline{\small{~~~$\lambda$}}
\centerline{\small{~~~(c)}}
\end{minipage} 
\caption {Construction of a system of filters adapted to the graph Laplacian spectrum of the class of Erd\H{o}s-R{\'e}nyi random graphs with $N=3000$ vertices and edge probability $p=0.05$. (a) The normalized histogram of the graph Laplacian eigenvalues of a single graph realization from this class, compared to the approximate empirical spectral distribution $p_{\lambda,N}^{ER}(s)$. (b) The warping function $\omega^{ER}(\lambda)$ defined in \eqref{Eq:ER_cdf}. (c) The resulting system of warped filters. Once again, 
the filters, although not adapted to that specific realization, are narrower in the regions of the spectrum where the eigenvalue density is higher.} 
 \label{Fig:er}
\end{figure}
\end{example}

\subsection{Normalized Graph Laplacian Spectrum of Erd\H{o}s-R{\'e}nyi Random Graphs} \label{Se:norm_ER}
As discussed in \cite{shuman_SPM}, it may be beneficial to use the normalized graph Laplacian eigenvectors as a graph spectral filtering basis in some applications. Therefore, we continue to consider Erd\H{o}s-R{\'e}nyi random graphs, and now derive filters adapted to the normalized graph Laplacian spectrum $\sigma(\tilde{L})$. The asymptotic behavior of the empirical spectral cumulative distribution of these eigenvalues is characterized in the following theorem.

\begin{theorem}[Fan, Lu, and Vu, Theorem 6, \cite{chung_power_law} and Jiang, Corollary 1.3, \cite{jiang}]
In the limit as the number of vertices $N$ goes to infinity, 
with probability one, 
the shifted and scaled empirical spectral cumulative distribution 
\begin{align*}
\bar{P}_{{\tilde{\lambda}},N}^{ER}(z):=\frac{1}{N} \sum_{\l=0}^{N-1} \Identity_{\left\{\sqrt{\frac{pN}{(1-p)}}\left(1-\tilde{\lambda}_{\l}\right)\leq z\right\}}
\end{align*}
of the normalized graph Laplacian eigenvalues of a large random  Erd\H{o}s-R{\'e}nyi graph with edge probability $p$ converges weakly to the 
semi-circular distribution 
\eqref{Eq:semicircular}.
\end{theorem}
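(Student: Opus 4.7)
The plan is to reduce the claim to Wigner's semicircle law through a chain of controlled approximations. Since $\tilde{\L} = I - D^{-1/2} W D^{-1/2}$, the shifted and rescaled quantities $\sqrt{pN/(1-p)}(1-\tilde{\lambda}_{\l})$ are exactly the eigenvalues of
\begin{align*}
B := \sqrt{\tfrac{pN}{1-p}}\, D^{-1/2} W D^{-1/2},
\end{align*}
so the goal becomes to show that the empirical spectral distribution (ESD) of $B$ converges weakly a.s.\ to the semi-circular measure whose density is given in \eqref{Eq:semicircular}.

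The backbone of the argument is a single application of Wigner's semicircle theorem. Decomposing $W = p(J-I) + E$, where $J$ is the all-ones matrix, the symmetric matrix $E$ has independent above-diagonal entries of mean zero and variance $p(1-p)$ and $E_{ii}=0$. Since these entries are uniformly bounded, $E/\sqrt{Np(1-p)}$ is a Wigner matrix whose ESD converges weakly almost surely to the semi-circular measure. Next, I would introduce the deterministic surrogate
\begin{align*}
B_0 := \sqrt{\tfrac{p}{N(1-p)}}\,(J-I) + \tfrac{1}{\sqrt{Np(1-p)}}\,E,
\end{align*}
obtained by replacing $D^{-1/2}$ with its asymptotic value $(Np)^{-1/2} I$. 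The first summand decomposes as a rank-one matrix $\sqrt{p/(N(1-p))}\,J$ (which creates a single outlying eigenvalue of order $\sqrt{Np/(1-p)}$) plus a vanishing scalar shift of order $N^{-1/2}$; by Cauchy interlacing the rank-one piece cannot affect the bulk limit, and the scalar shift is asymptotically negligible. Thus the ESD of $B_0$ also converges to the semi-circular measure.

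To transfer this conclusion to $B$ I would invoke the Hoffman-Wielandt inequality: the ESDs of $B$ and $B_0$ share a weak limit as soon as $\|B-B_0\|_F^2 = o(N)$ almost surely. This is the step I expect to be the main obstacle, since the degree matrix $D$ and the centered noise $E$ are jointly determined by $W$ and therefore not independent. The workaround is a worst-case bound on degree fluctuations: each $D_{ii}$ is $\mathrm{Binomial}(N-1,p)$, and a Bernstein inequality combined with a union bound yields $\max_i |D_{ii}/(Np) - 1| = O(\sqrt{\log N/(Np)})$ almost surely. Setting $\varepsilon_{ij} := Np/\sqrt{D_{ii}D_{jj}} - 1$, a direct calculation gives $(B - B_0)_{ij} = W_{ij}\,\varepsilon_{ij}/\sqrt{Np(1-p)}$, so
\begin{align*}
\|B - B_0\|_F^2 \le \frac{\max_{i,j}\varepsilon_{ij}^2}{Np(1-p)} \sum_{i,j} W_{ij}^2 = O\!\left(\frac{\log N}{(Np)^2 (1-p)} \cdot N^2 p\right) = O(\log N),
\end{align*}
which is $o(N)$ in the fixed-$p$ regime. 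The remaining ingredients---Wigner's theorem, Bernstein concentration, and Cauchy interlacing---are all standard and can be quoted off the shelf.
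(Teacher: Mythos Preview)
The paper does not prove this theorem at all: it is quoted directly from \cite{chung_power_law} and \cite{jiang}, and no argument appears in the appendix or elsewhere. So there is no ``paper's own proof'' to compare against; you have supplied a proof where the authors simply cite one.

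That said, your sketch is essentially the standard route to results of this type and is close in spirit to Jiang's argument in \cite{jiang}. The reduction to the eigenvalues of $B=\sqrt{pN/(1-p)}\,D^{-1/2}WD^{-1/2}$ is correct; replacing $D$ by its deterministic centering $Np\,I$ to obtain $B_0=W/\sqrt{Np(1-p)}$ and then decomposing $W=p(J-I)+E$ is exactly the right move. The rank-one piece is handled by interlacing, the scalar shift is negligible, and Wigner's theorem applies to $E/\sqrt{Np(1-p)}$ since the entries are bounded i.i.d.\ with variance $p(1-p)$. Your Hoffman--Wielandt step is also fine: the inequality bounds the $2$-Wasserstein distance between the two ESDs by $N^{-1/2}\|B-B_0\|_F$, and your computation gives $\|B-B_0\|_F^2=O(\log N/(p(1-p)))=o(N)$ for fixed $p$, as required. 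The only points I would tighten are (i) making explicit the Borel--Cantelli step that upgrades the Bernstein tail bound on $\max_i|D_{ii}/(Np)-1|$ to an almost-sure statement, and (ii) noting that $\sum_{i,j}W_{ij}^2$ equals twice the edge count, which concentrates around $N(N-1)p$ by another Bernstein bound. Neither is a genuine gap.
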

 
We want to take the warping function for a random graph with $N$ vertices to be the (deterministic) approximate empirical spectral cumulative distribution. Substituting $x=\sqrt{\frac{pN}{1-p}}(1-s)$ into \eqref{Eq:semicircular} yields
\begin{align}\label{Eq:er_norm_density}
p_{\tilde{\lambda},N}^{ER}(s)
=\frac{1}{2\pi} \sqrt{\frac{pN}{1-p}} \sqrt{4-\frac{pN}{1-p}(1-s)^2}~\Identity_{\left\{1-2\sqrt{\frac{1-p}{pN}}\leq s \leq 1+2\sqrt{\frac{1-p}{pN}}\right\}}.
\end{align}
Integrating \eqref{Eq:er_norm_density}, we find that for large $N$, the empirical spectral cumulative distribution is approximately
\begin{align*} 
P_{\tilde{\lambda},N}^{ER}(z)&=\int_{0}^{z} p_{\tilde{\lambda},N}^{ER}(s)~ds \\
&=
\begin{cases}
0,&\hbox{ if } 0\leq z < 1-2\sqrt{\frac{1-p}{pN}} \\
\left[
\begin{array}{l}
\pi \sqrt{\frac{1-p}{pN}} + \left(\frac{z-1}{2}\right) \sqrt{4-\frac{pn}{1-p}(1-z)^2}\\
-2 \sqrt{\frac{1-p}{pN}} \arcsin\left(\sqrt{\frac{pN}{1-p}}\left(\frac{1-z}{2}\right)\right)
\end{array}
\right]  
,&\hbox{ if } 1-2\sqrt{\frac{1-p}{pN}} \leq z < 1+2\sqrt{\frac{1-p}{pN}} \\
1, &\hbox{ if } 1+2\sqrt{\frac{1-p}{pN}} \leq z \leq 2
\end{cases}~, \nonumber
\end{align*}
where we 
use formulas from \cite[Section 2.26, pp. 94-95]{gradshteyn} to evaluate the integral. 

Thus, given a large but finite Erd\H{o}s-R{\'e}nyi random graph with $N$ vertices and edge probability $p$, we proceed as in Section \ref{Se:random_regular}, with $\tilde{\lambda}_{upper}$ either computed more precisely or simply set to 2, and 
\begin{align}\label{Eq:er_norm_warp}
\tilde{\omega}^{ER}(z):=P_{\tilde{\lambda},\infty}^{ER}(z),~z\in\left[0,\tilde{\lambda}_{upper}\right].
\end{align}

\begin{example} \label{Ex:ern}
We now consider the same class of Erd\H{o}s-R{\'e}nyi random graphs and specific graph realization as in Example \ref{Ex:er}, but we adapt the filters to the normalized graph Laplacian spectrum. We use the trivial upper bound $\lambda_{upper}=2$. 
Figure \ref{Fig:ern} shows 
the approximate empirical spectral distribution, warping function, and resulting system of warped filters.
\end{example}
\begin{figure}[h]
\centering
\begin{minipage}[b]{.32\linewidth}
\centerline{~~~Empirical Spectral Distribution}
\centerline{\includegraphics[width=\linewidth]{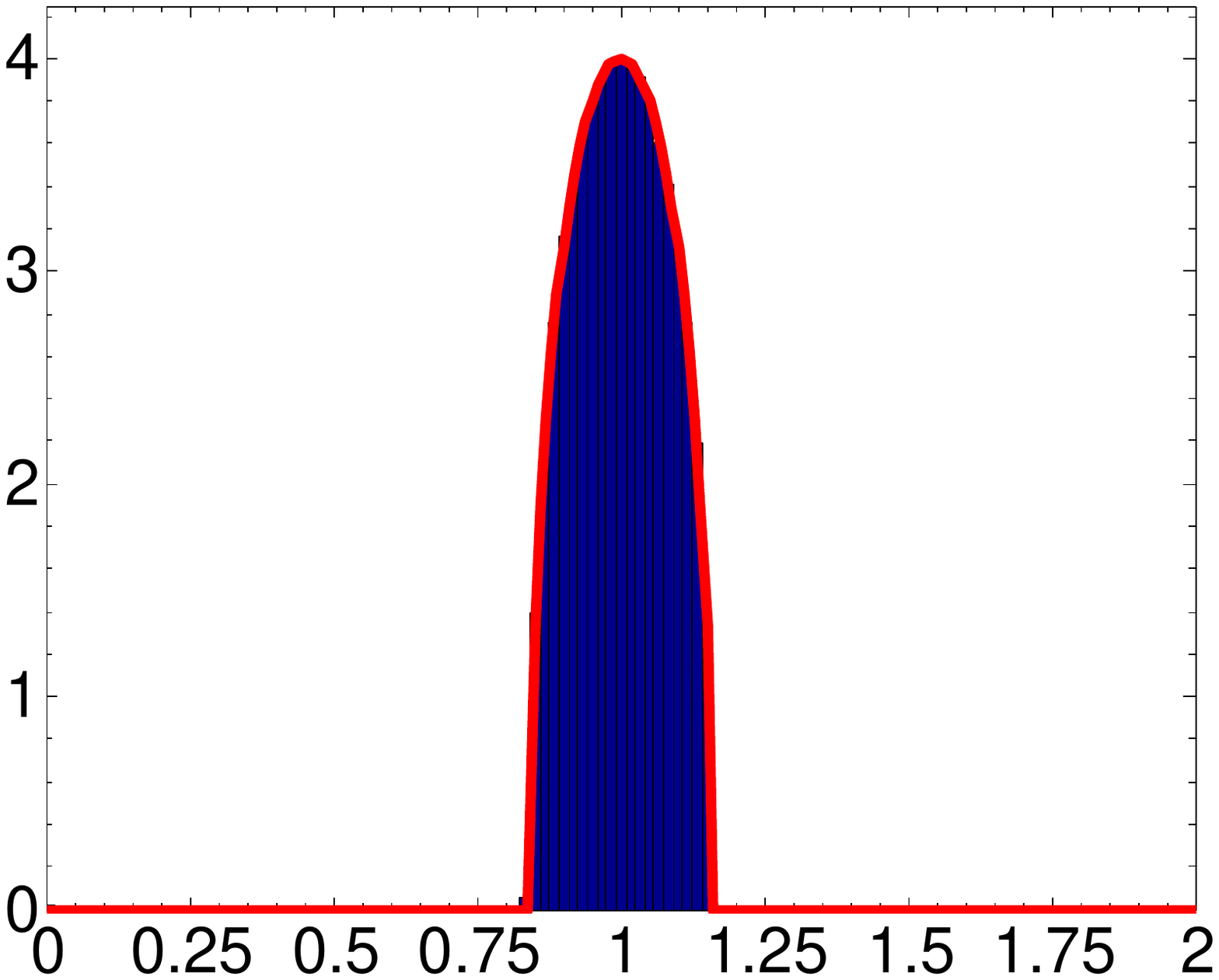}} 
\centerline{\small{~~~$\lambda$}}
\centerline{\small{~~~(a)}}
\end{minipage}
\hfill
\begin{minipage}[b]{.318\linewidth}
\centerline{~~~Warping Function}
\centerline{\includegraphics[width=\linewidth]{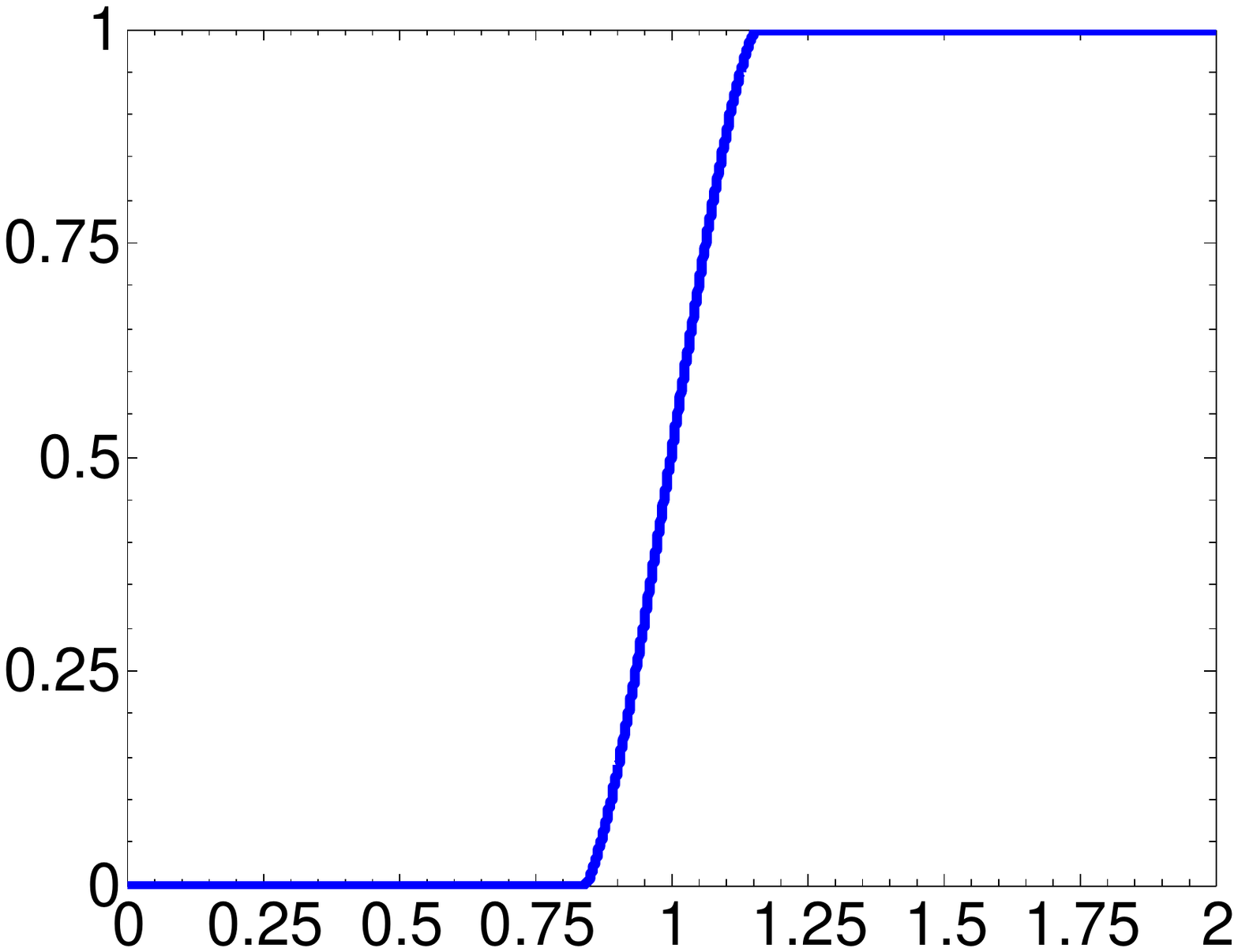}} 
\centerline{\small{~~~$\lambda$}}
\centerline{\small{~~~(b)}}
\end{minipage}
\hfill
\begin{minipage}[b]{.32\linewidth}
\centerline{~~~Warped Filters}
\centerline{\includegraphics[width=\linewidth]{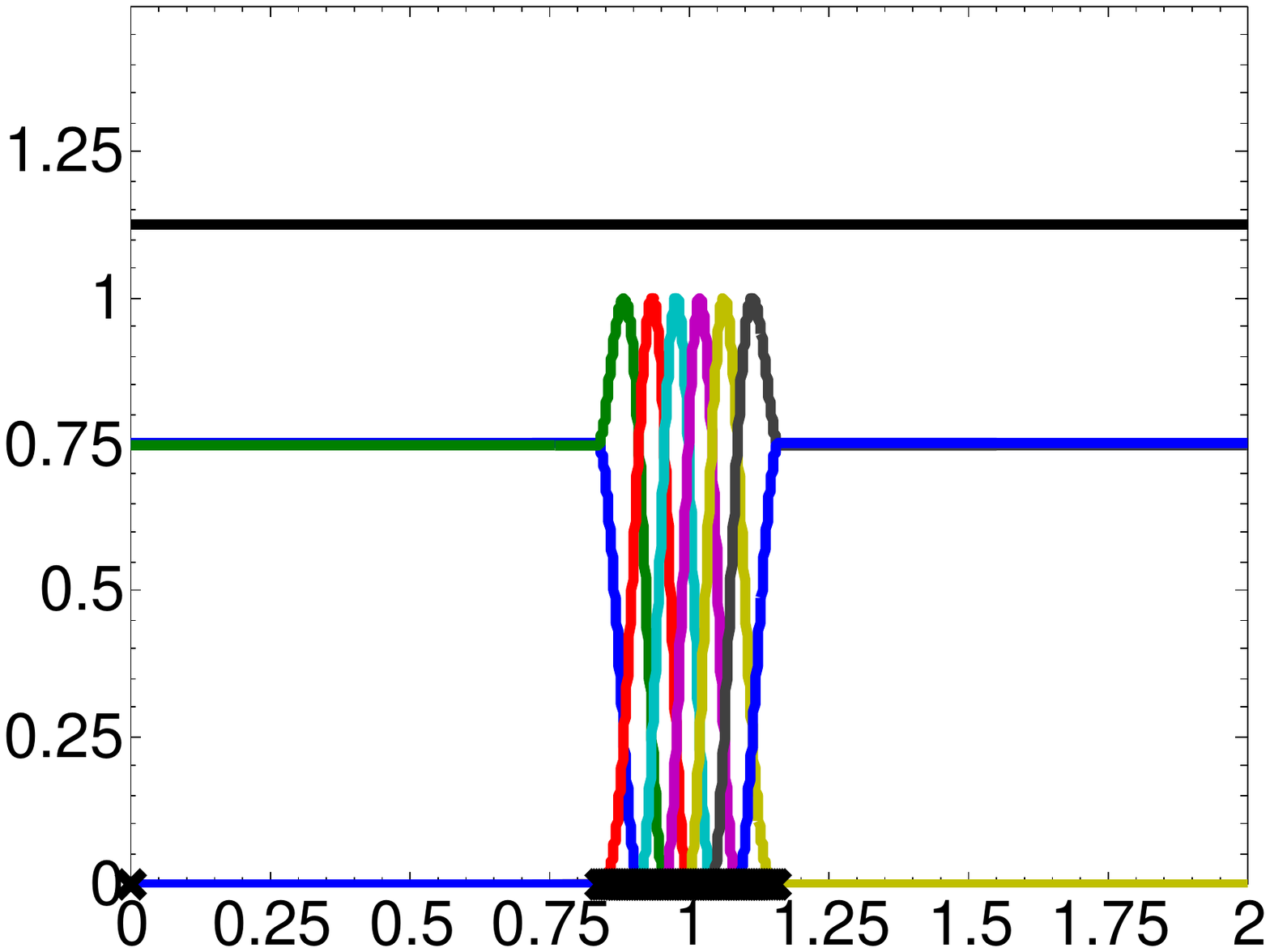}} 
\centerline{\small{~~~$\lambda$}}
\centerline{\small{~~~(c)}}
\end{minipage} 
\caption {Construction of a system of filters adapted to the normalized graph Laplacian spectrum of the class of Erd\H{o}s-R{\'e}nyi random graphs with $N=3000$ vertices and edge probability $p=0.05$. (a) The normalized histogram of the normalized graph Laplacian eigenvalues of a single graph realization from this class, compared to the approximate empirical spectral distribution $p_{\tilde{\lambda},N}^{ER}(s)$ given in \eqref{Eq:er_norm_density}. (b) The warping function $\tilde{\omega}^{ER}(\tilde{\lambda})$ defined in \eqref{Eq:er_norm_warp}. (c) The 
system of warped filters.}
 \label{Fig:ern}
\end{figure}

\section{Spectrum-Adapted Tight Graph Wavelet Frames}\label{Se:wavelets}
We can now combine the logarithmic warping from Section \ref{Se:log_wavelet} with the spectrum-adapted warping functions from Sections \ref{Se:adapted} and \ref{Se:random} to generate spectrum-adapted tight wavelet frames. Namely, we take the warping function to be 
\begin{align}\label{Eq:composite_warp}
\omega(\lambda):=\log\bigl(\omega_0(\lambda)
\bigr),
\end{align}
where $\omega_0(\cdot)$ is a normalizing constant times some approximation of the empirical spectral cumulative distribution. 
Then we can once again generate the wavelet and scaling kernels according to \eqref{Eq:logwarp0} and \eqref{Eq:logwarp1}.

\begin{example}\label{Ex:wavelet_comparison}
We consider the same class of Erd\H{o}s-R{\'e}nyi random graphs from Example \ref{Ex:er}, and take $\omega_0(\lambda)=\lambda_{upper}\cdot\omega^{ER}(\lambda)$. In Figure \ref{Fig:wavelet_comparison}, we compare the wavelet and scaling kernels generated from the spectral graph wavelet transform, Meyer-like tight wavelet frames, and log-warped tight wavelet frame from Section \ref{Se:log_wavelet} to the warped filters generated by the composite warping function \eqref{Eq:composite_warp}.
\end{example}

In Example \ref{Ex:eigen_loc}, we saw that a filter whose support does not overlap any Laplacian eigenvalues leads to atoms with zero norm, which are not helpful in analysis. More generally, it is desirable that the wavelet atoms are not too correlated with each other. To quantify 
these correlations, we examine the cumulative coherence function \cite{tropp}, which, for a given sparsity level $k$ is defined as 
\begin{align*}
\mu_1(k):=\max_{|\Theta|=k}~\max_{\psi \in {\cal D}_{\{1,2,\ldots,N\cdot M\}\setminus \Theta}}~\sum_{\theta \in \Theta} \frac{|\ip{\psi}{{\cal D}_{\theta}}|}{\norm{\psi}_2\norm{{\cal D}_{\theta}}_2}.
\end{align*}
In Table \ref{Ta:cumulative_coherences}, we compare the cumulative coherences for different graph wavelet constructions. When 
an atom has a norm of 0, we list the cumulative 
coherence as N/A. We also show $\sigma_{\norm{g_{i,m}}}$, the standard deviation of the norms of the atoms of each dictionary. The four graphs have $N=256,500,64,$ and $1000$ vertices, respectively. We see that in all cases, the spectrum-adapted tight wavelet frame has the smallest cumulative coherence and standard deviation of the atom norms.

\begin{table}[htb]
{\footnotesize
\hspace{-.3in}
\tabcolsep=0.11cm
\begin{tabular}{l|ccc|ccc|ccc|ccc|}
\cline{2-13}
 & \multicolumn{3}{ c| }{Path Graph} & \multicolumn{3}{ c| }{Sensor Network} & \multicolumn{3}{ c| }{Comet Graph} & \multicolumn{3}{ c| }{Random Erd\H{o}s-R{\'e}nyi}\\ 
\cline{2-13}
& $\mu_1(\sqrt{N})$ & $\mu_1(N)$ & $\sigma_{\norm{g_{i,m}}}$ & $\mu_1(\sqrt{N})$ & $\mu_1(N)$ & $\sigma_{\norm{g_{i,m}}}$ & $\mu_1(\sqrt{N})$ & $\mu_1(N)$ & $\sigma_{\norm{g_{i,m}}}$ & $\mu_1(\sqrt{N})$ & $\mu_1(N)$ & $\sigma_{\norm{g_{i,m}}}$ \\ 
\cline{1-13}
\multicolumn{1}{ |l| }{Spectral Graph} & 13.3 & 48.0 & 0.18 & 21.7 & 139.4 & 0.33 & 8.0 & 63.5 & 0.38 & 32.0 & 999.0 & 0.43 \\ 
\cline{1-13}
\multicolumn{1}{ |l| }{Meyer-Like} & 15.5 & 70.1 & 0.14 & 21.9 & 178.8 & 0.25 & N/A & N/A & 0.28 & N/A & N/A & 0.31 \\ 
\cline{1-13}
\multicolumn{1}{ |l| }{Degree-Adapted Meyer} & 16.0 & 130.2 & 0.18 & N/A & N/A & 0.27 & N/A & N/A & 0.27 & N/A & N/A & 0.31 \\ 
\cline{1-13}
\multicolumn{1}{ |l| }{Log-Warped} & 13.3 & 43.7 & 0.12 & 21.6 & 138.1 & 0.24 & N/A & N/A & 0.28 & N/A & N/A & 0.32 \\
\cline{1-13}
\multicolumn{1}{ |l| }{Spectrum-Adapted} & 12.9 & 34.0 & 0.10 & 21.6 & 127.0 & 0.23 & 8.0 & 55.2 & 0.25 & 31.7 & 829.5 & 0.25 \\
\cline{1-13} 
\end{tabular}
}
\caption{Comparison of the cumulative coherences of the normalized dictionary atoms of five different graph wavelet frames adapted to four different graphs, for sparsity levels $\sqrt{N}$ (rounded to the nearest integer) and $N$.} 
\label{Ta:cumulative_coherences}
\end{table}
\begin{figure}[htb]
\centering
\begin{minipage}[b]{.32\linewidth}
\centerline{~~~Spectral Graph}
\centerline{~~~Wavelet Frame \cite{sgwt}}
\centerline{\includegraphics[width=\linewidth]{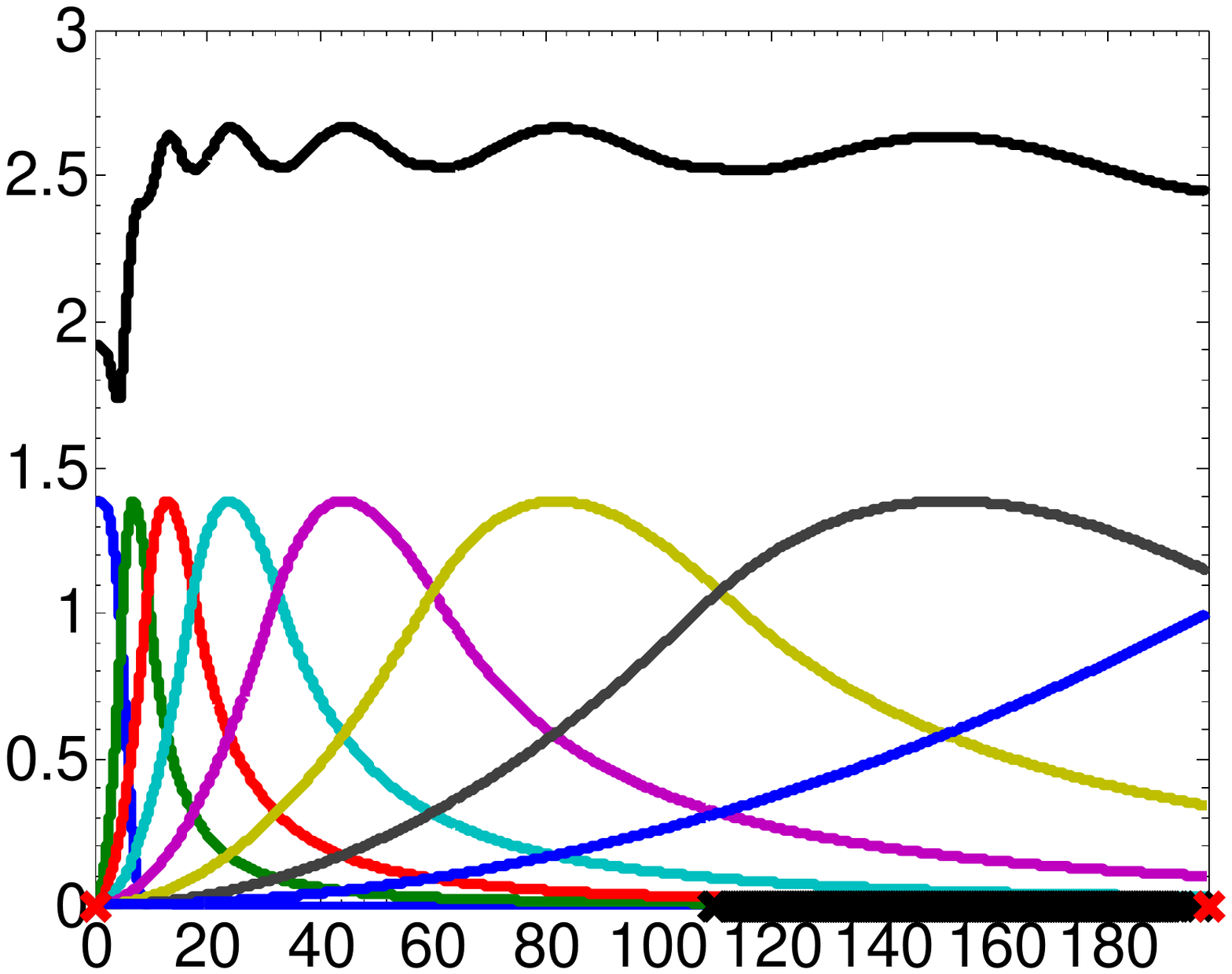}} 
\centerline{\small{~~~~$\lambda$}}
\centerline{\small{~~~~(a)}}
\end{minipage}
\hfill
\begin{minipage}[b]{.32\linewidth}
\centerline{~~~Meyer-Like Tight}
\centerline{~~~Graph Wavelet Frame \cite{leonardi_fmri,leonardi_multislice}}
\centerline{\includegraphics[width=\linewidth]{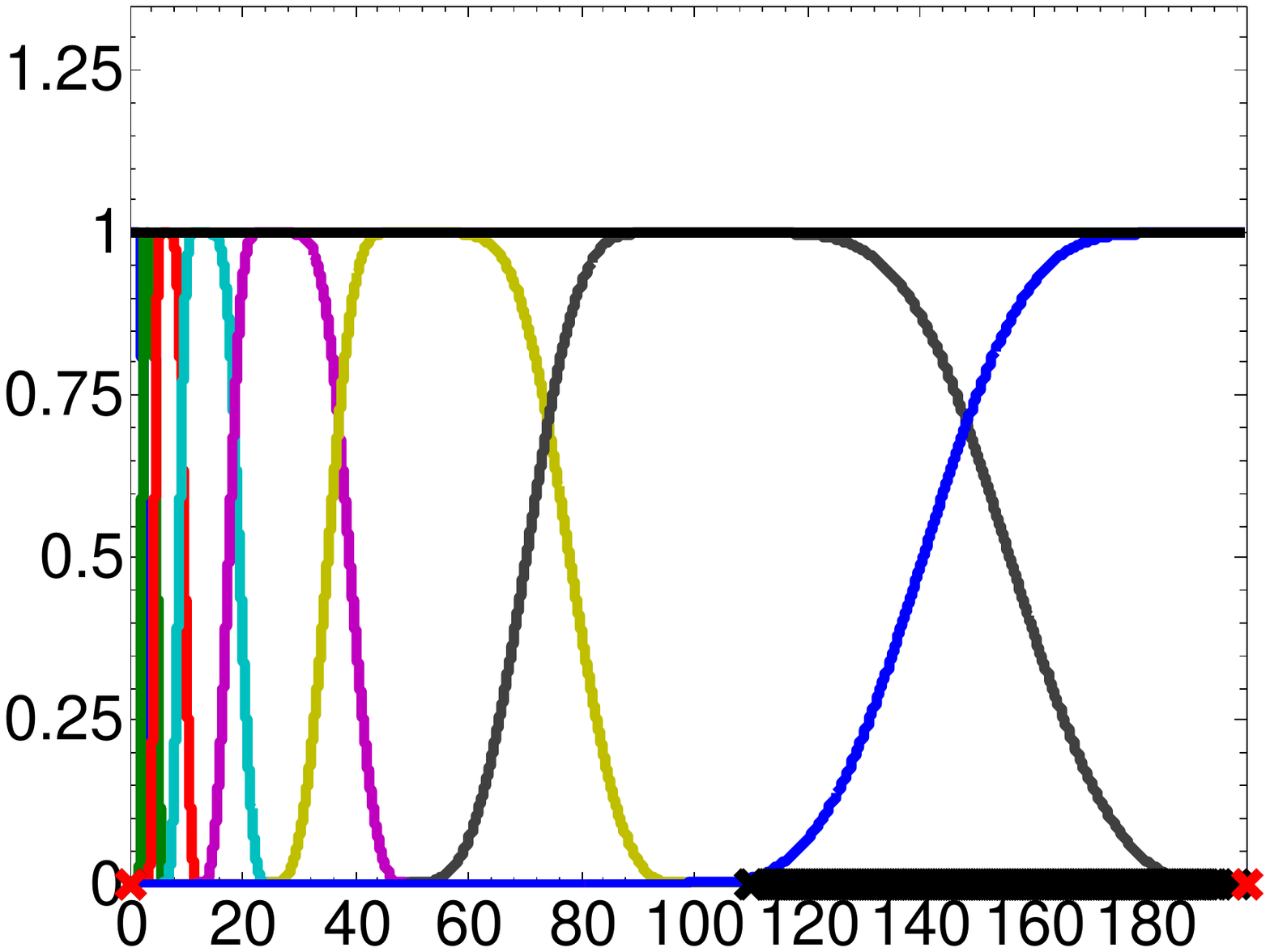}} 
\centerline{\small{~~~~$\lambda$}}
\centerline{\small{~~~~(b)}}
\end{minipage}
\hfill
\begin{minipage}[b]{.32\linewidth}
\centerline{~~~Max Degree-Adapted Meyer-Like}
\centerline{~~~Tight Graph Wavelet Frame \cite{leonardi_multislice}}
\centerline{\includegraphics[width=\linewidth]{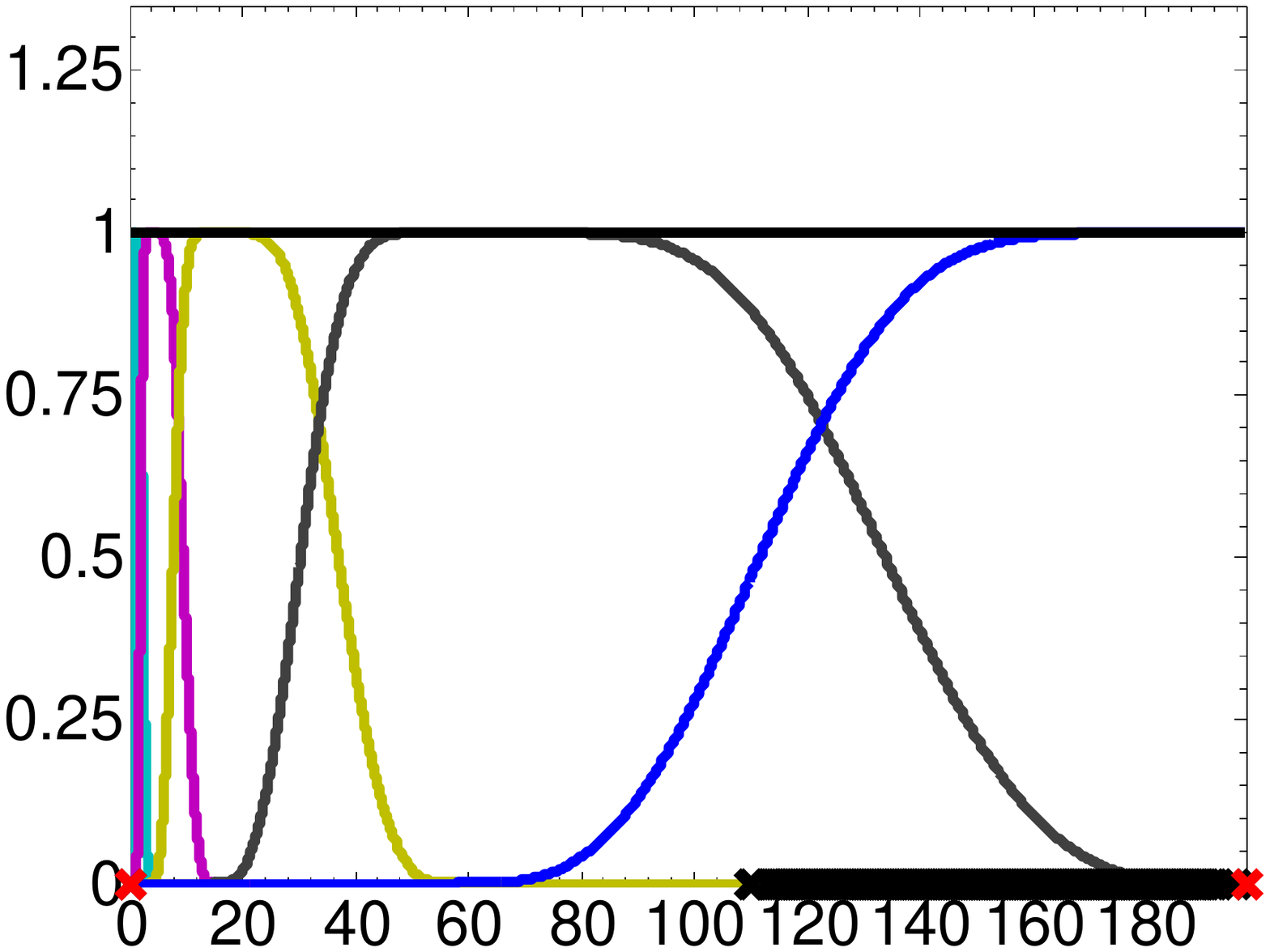}} 
\centerline{\small{~~~~$\lambda$}}
\centerline{\small{~~~~(c)}}
\end{minipage} \\
\vspace{.15in}
\begin{minipage}[b]{.32\linewidth}
\centerline{~~~Warping Functions}
\centerline{\includegraphics[width=\linewidth]{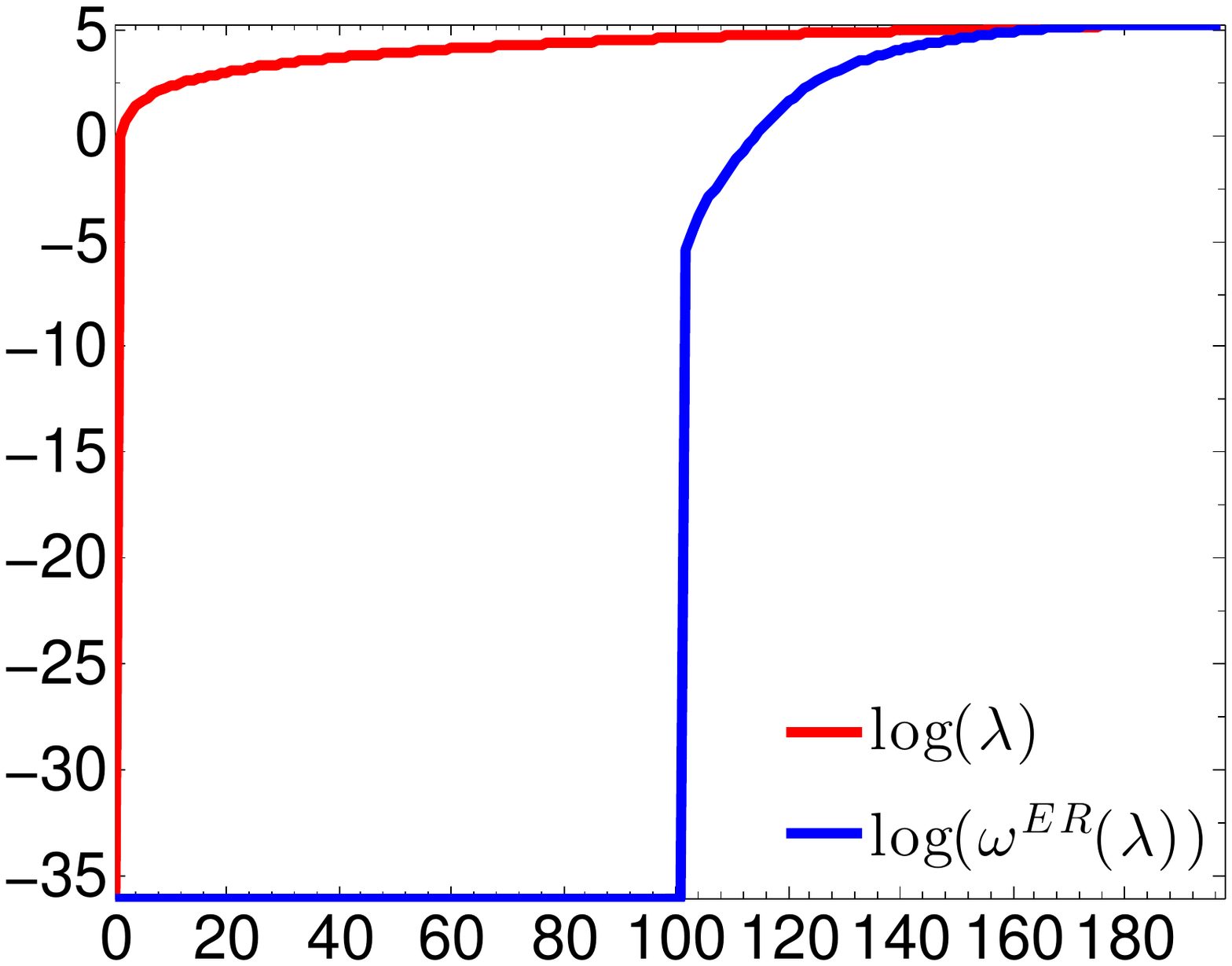}} 
\centerline{\small{~~~~$\lambda$}}
\centerline{\small{~~~~(d)}}
\end{minipage}
\hfill
\begin{minipage}[b]{.32\linewidth}
\centerline{~~~Log-Warped Tight}
\centerline{~~~Graph Wavelet Frame}
\centerline{\includegraphics[width=\linewidth]{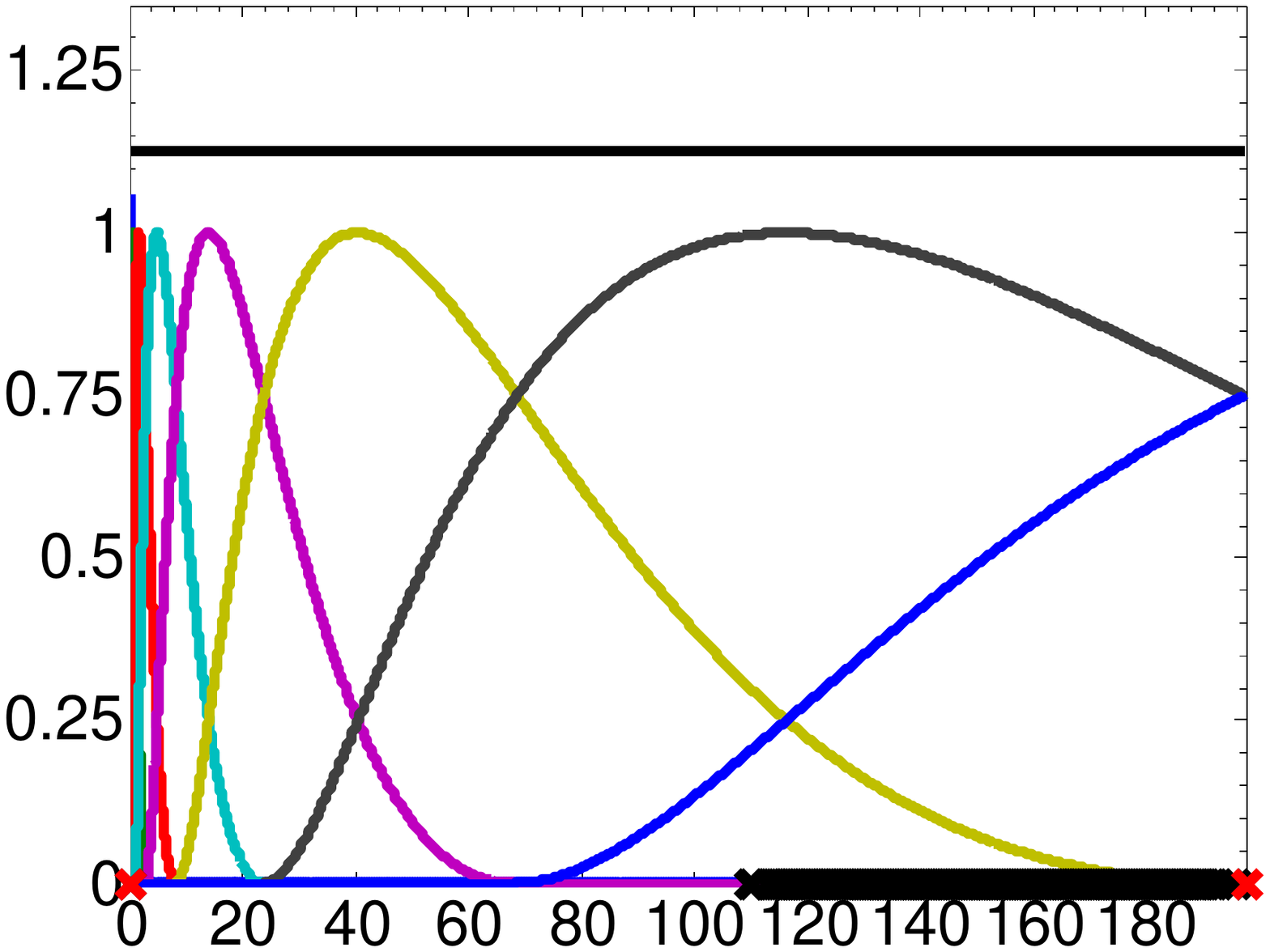}} 
\centerline{\small{~~~~$\lambda$}}
\centerline{\small{~~~~(e)}} 
\end{minipage}
\hfill
\begin{minipage}[b]{.32\linewidth}
\centerline{~~~Spectrum-Adapted Tight}
\centerline{~~~Graph Wavelet Frame}
\centerline{\includegraphics[width=\linewidth]{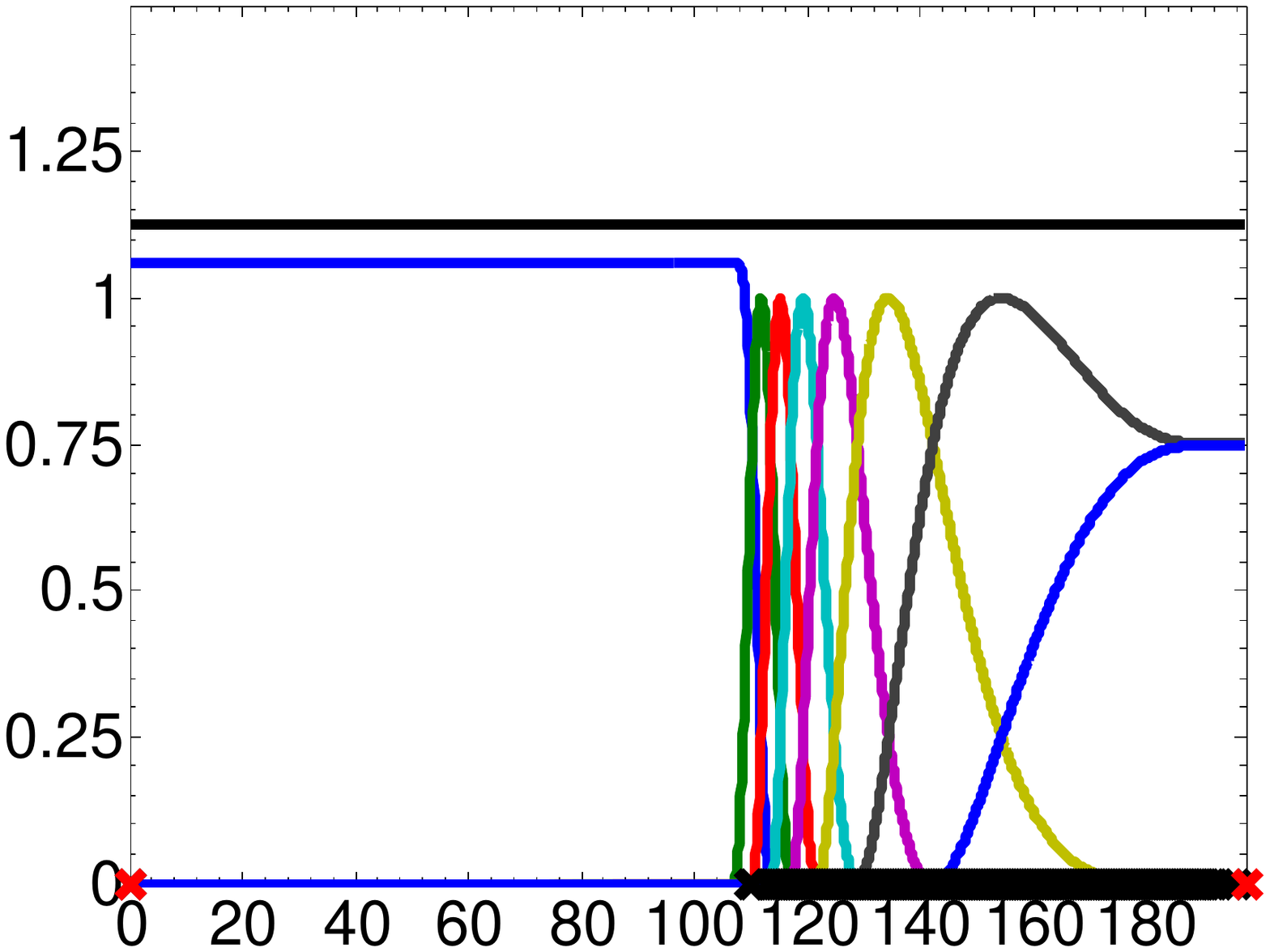}} 
\centerline{\small{~~~~$\lambda$}}
\centerline{\small{~~~~(f)}} 
\end{minipage} 
\caption {Five different sets of wavelet and scaling kernels on the graph Laplacian spectrum for Erd\H{o}s-R{\'e}nyi random graphs with $N=3000$ and edge probability $p=.05$. The spectral graph wavelet transform, Meyer-like tight wavelet frame, and log-warped tight wavelet frame in (a), (b), and (e) are only adapted to an approximation of the length of the spectrum, $\lambda_{upper}$. The Meyer-like tight wavelet frame of (c) is also adapted to the maximum degree via the warping function $C \arccos \left(1-\frac{\lambda}{d_{\max}}\right)$, where the constant $C=\lambda_{upper}/\arccos \left(1-\frac{\lambda_{upper}}{d_{\max}}\right)$ ensures that the range of the warping function is $[0,\lambda_{upper}]$. The tight frame kernels in (f) are adapted to 
an approximation of the empirical spectral cumulative distribution via the composite warping function \eqref{Eq:composite_warp}, which is shown in (d). Although not used in the construction of any of the above filters, the eigenvalues of a single realization from this class of graphs are shown on the horizontal axis of each system of filters. We see that the system of filters in (f) is the only one of the five concentrated on the area of the spectrum where the eigenvalues are concentrated.
} 
\label{Fig:wavelet_comparison}
\end{figure}

\section{Illustrative Example: Scalable Vertex-Frequency Analysis}
In order to extend classical time-frequency analysis to the graph setting, \cite{shuman_SSP_2012,shuman_ACHA_2013} introduce windowed graph Fourier frames, which consist of atoms of the form $g_{i,k}:=M_k T_i g$, where $T_i$ is the generalized translation operator of 
\eqref{Eq:atom_form} and $M_k$ is a generalized modulation operator. The inner products of these atoms with a signal comprise the windowed graph Fourier transform (WGFT), and the squared magnitudes of the WGFT coefficients yield a ``graph spectrogram.'' The graph spectrogram of a given graph signal can be viewed as a frequency-lapse video that shows which frequency components are present in which areas of the graph. In Example \ref{Ex:minn_spectrogram} below, we show how the spectrum-adapted tight frames proposed in Sections \ref{Se:adapted} and \ref{Se:random} can also be used to perform vertex-frequency analysis.

\begin{figure}[t]
\centering
\hfill
\hfill
\begin{minipage}[b]{.4\linewidth}
\centerline{Clusters~}
\centerline{\includegraphics[width=\linewidth]{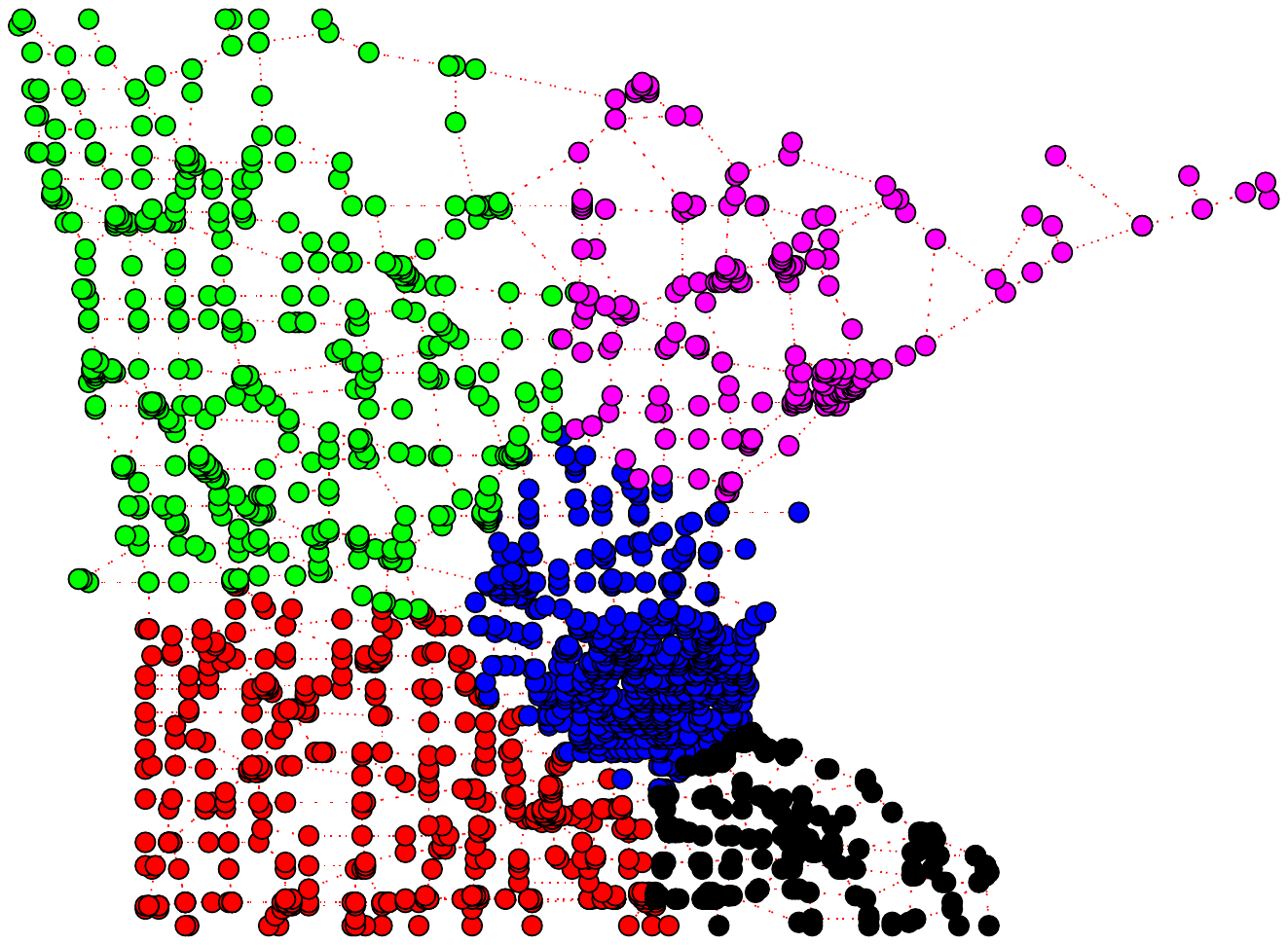}} 
\centerline{\small{(a)}}
\end{minipage}
\hfill
\begin{minipage}[b]{.4\linewidth}
\centerline{$f$~}
\centerline{\includegraphics[width=\linewidth]{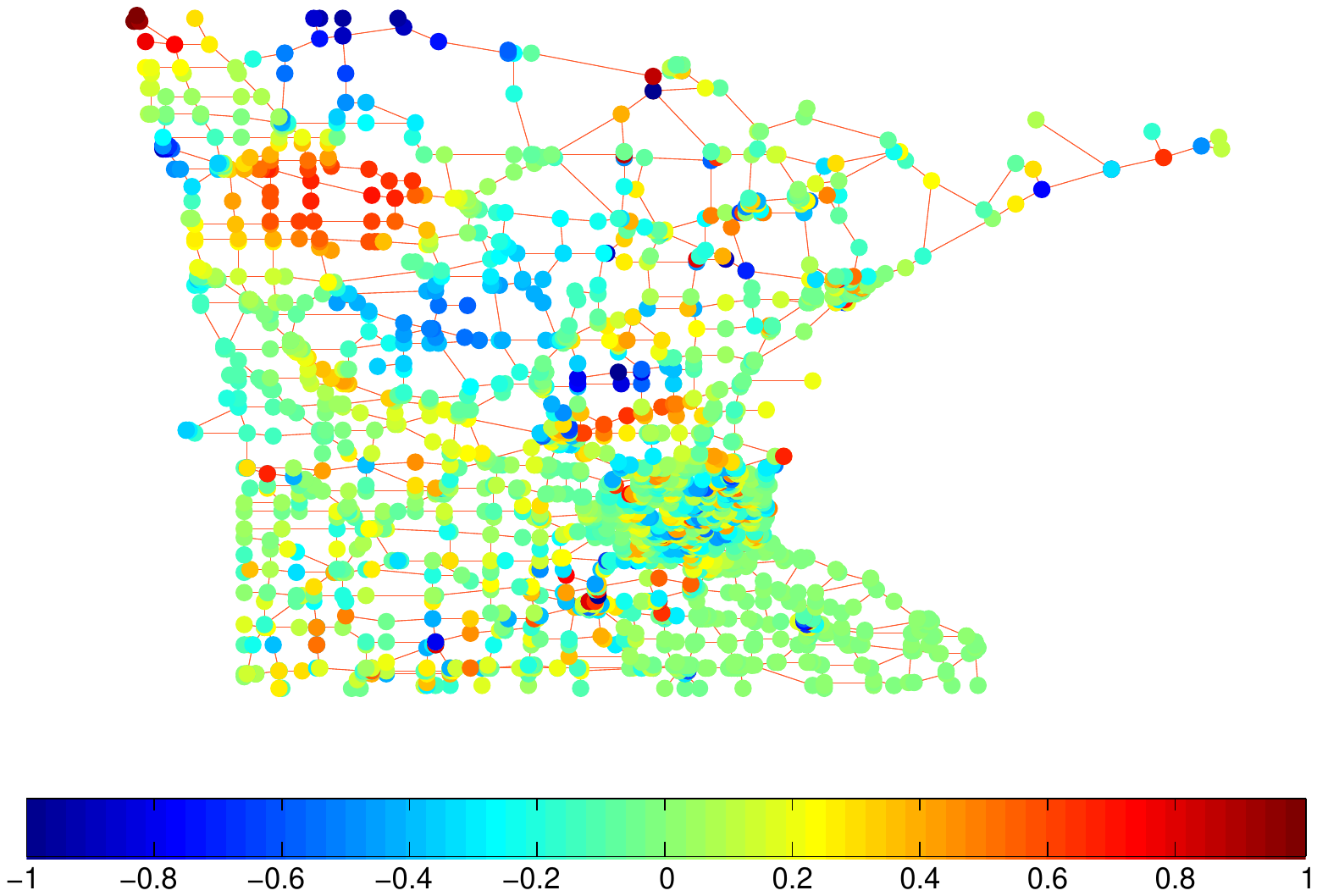}} 
\centerline{\small{~(b)}}
\end{minipage}
\hfill 
\hfill
\hfill
\\
\vspace{.15in}
\centering
\begin{minipage}[b]{.3\linewidth}
\centerline{~~~$\hat{f}\left(\lambda_{\l}\right)$}
\centerline{\includegraphics[width=\linewidth]{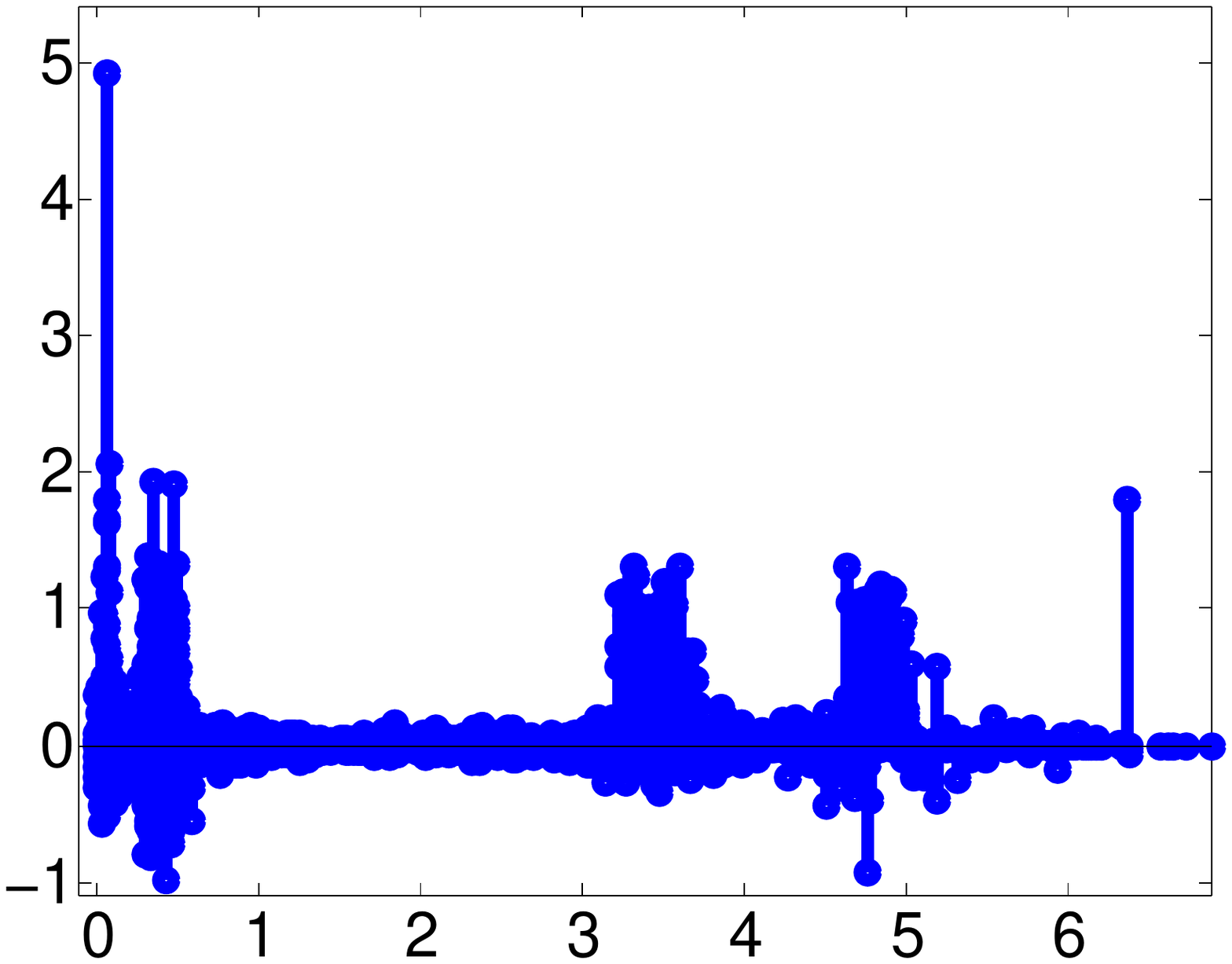}} 
\centerline{\small{~~~~$\lambda$}}
\centerline{\small{~~~~(c)}}
\end{minipage}
\hfill
\begin{minipage}[b]{.3\linewidth}
\centerline{~~~Warping Function}
\centerline{\includegraphics[width=\linewidth]{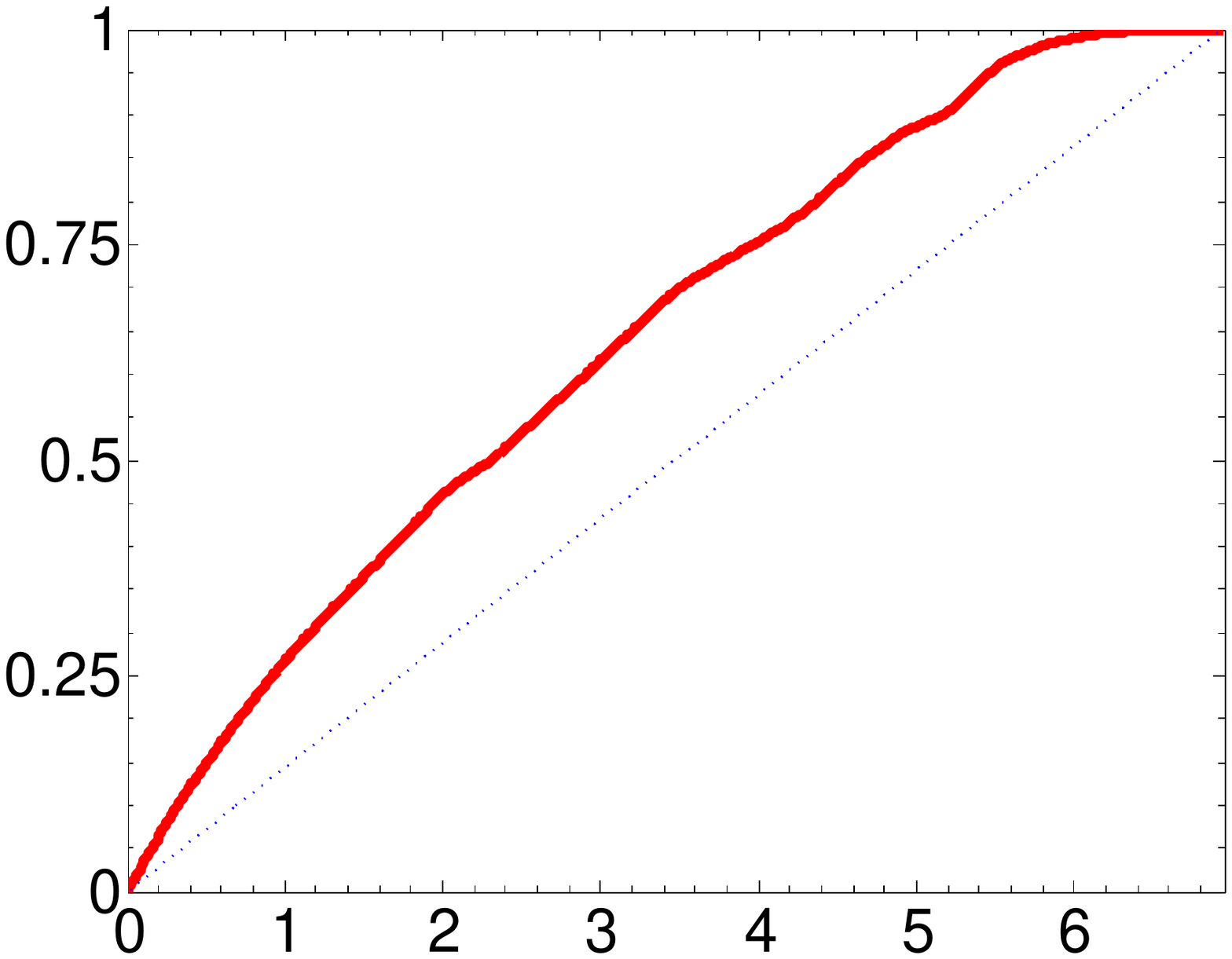}} 
\centerline{\small{~~~~$\lambda$}}
\centerline{\small{~~~~(d)}}
\end{minipage}
\hfill
\begin{minipage}[b]{.3\linewidth}
\centerline{~~~Spectrum-Adapted Filters}
\centerline{\includegraphics[width=\linewidth]{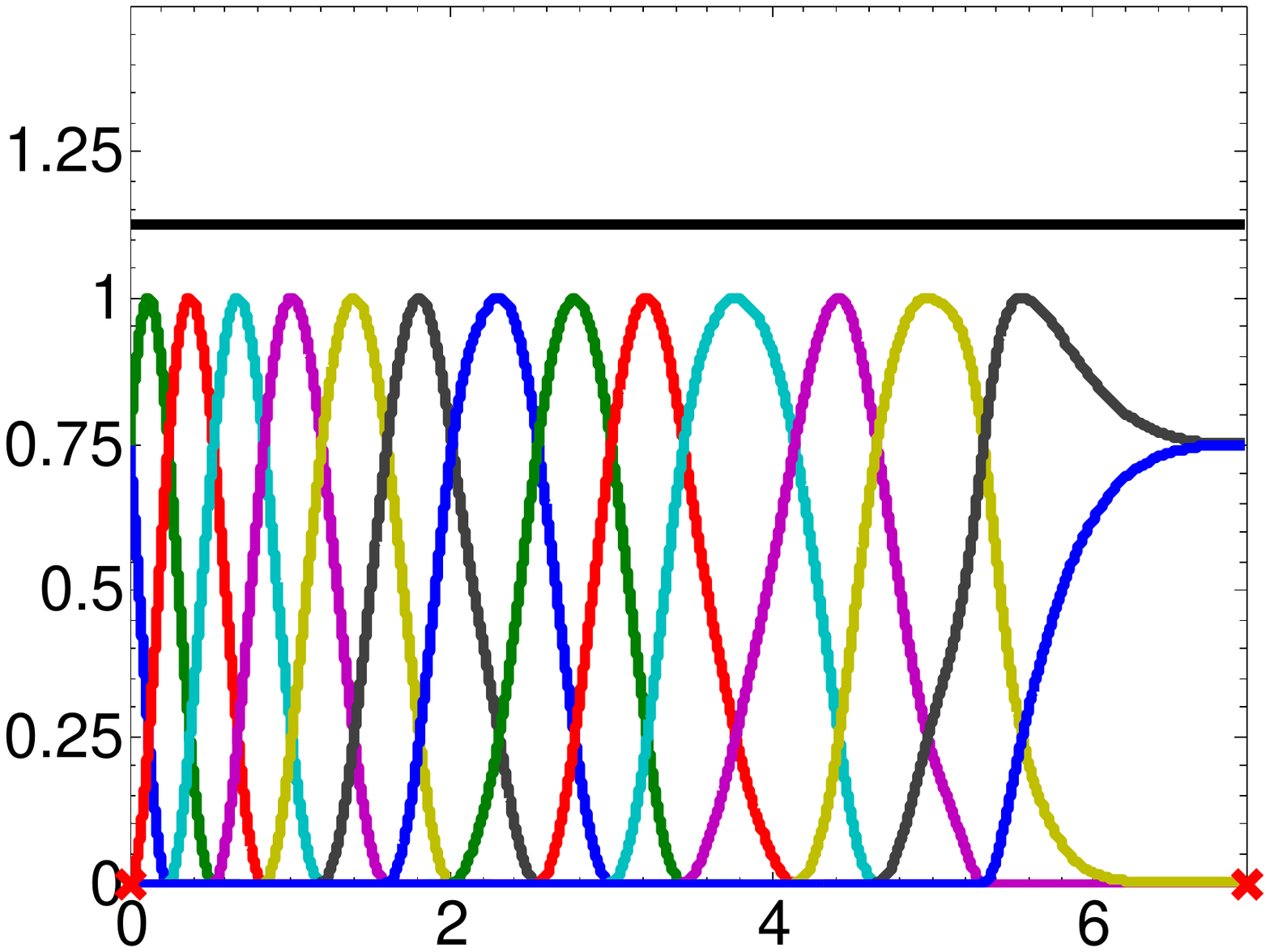}} 
\centerline{\small{~~~~$\lambda$}}
\centerline{\small{~~~~(e)}}
\end{minipage} 
\caption {(a) The Minnesota road graph segmented into five different clusters. (b) The graph signal defined in \eqref{Eq:minn_signal}. (c) The graph Fourier transform of the signal in (b). (d) A warping function (in red) generated from an approximation of the cumulative spectral density function of the graph Laplacian eigenvalues of the Minnesota graph. The dashed blue line spans the diagonal for comparison. (e) A system of warped filters, $\{\widehat{g_m}\}_{m=1,2,\ldots,15}$, where $\widehat{g_m}(\lambda)=\widehat{g_m^U}(\omega(\lambda))$ and $\omega(\cdot)$ is the warping function shown in (d).} 
 \label{Fig:minn1}
\end{figure}

\begin{example}\label{Ex:minn_spectrogram}
We form a signal $f$ on the (unweighted) Minnesota road graph \cite{gleich}
by first using spectral clustering (see, e.g., \cite{spectral_clustering}) to partition the graph into the five clusters shown in Figure \ref{Fig:minn1}(a), and then by summing up eigenvectors in different frequency bands and restricting them to different clusters of the graph. More specifically, $f:=\sum_{j=1}^5 f_j/\norm{f_j}_{\infty}$, where 
\begin{align} \label{Eq:minn_signal}
f_j(i) := \Identity_{\{\hbox{vertex i is in cluster j}\}}\sum_{\l=0}^{N-1} u_{\l}(i)\Identity_{\{ \underline{\tau}_j \leq \lambda_{\l}\leq \bar{\tau}_j \}}.
\end{align}
We take the sequence $\{[\underline{\tau}_j,\bar{\tau}_j]\}_{j=1,2,\ldots,5}$ to be $ [0.06,0.08], [0.3,0.5], [3.2,3.7], [4.6,5.0], [6.0,6.6]$ for the green, blue, red, magenta, and black clusters, respectively.
In Figure \ref{Fig:minn1}(c), we plot the graph Fourier transform of $f$, and we can see the different frequency components of the signal, but we can not tell that the frequency components are localized to different sections of the graph. We then use the method of Section \ref{Se:approx_spectrum} to approximate the cumulative spectral density function of the graph Laplacian eigenvalues, and use it as a warping function (shown in Figure \ref{Fig:minn1}(d)) to generate the system of 15 spectral graph filters in Figure \ref{Fig:minn1}(e). 
We then generate a tight frame of vertex-frequency atoms of the form of ${\cal D}$ in Lemma \ref{Le:frame}, and plot the magnitudes of the inner products of the signal $f$ with some of these atoms in Figure \ref{Fig:minn2}. 
While the structure of $f$ is not apparent from its plot 
in Figure \ref{Fig:minn1}(b), the coefficients in 
%
Figure \ref{Fig:minn2} show the varying degree of local smoothness of the signal in different regions of the graph.

\begin{figure}[h]
\centering
\begin{minipage}[b]{.23\linewidth}
\centerline{$|\ip{f}{T_i g_1}|~~~~$}
\centerline{\includegraphics[width=\linewidth]{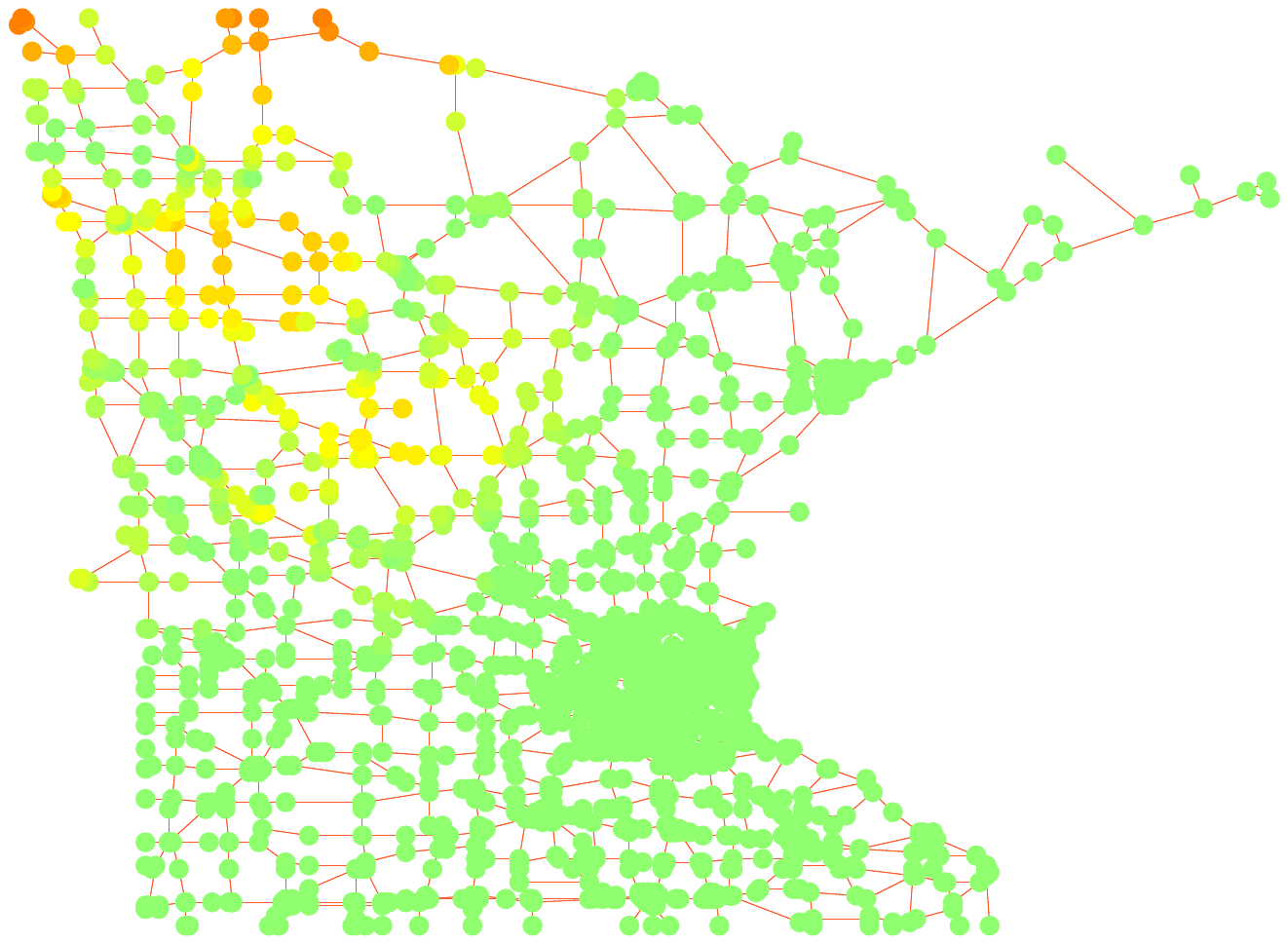}} 
\end{minipage}
\hfill
\begin{minipage}[b]{.23\linewidth}
\centerline{$|\ip{f}{T_i g_2}|~~~~$}
\centerline{\includegraphics[width=\linewidth]{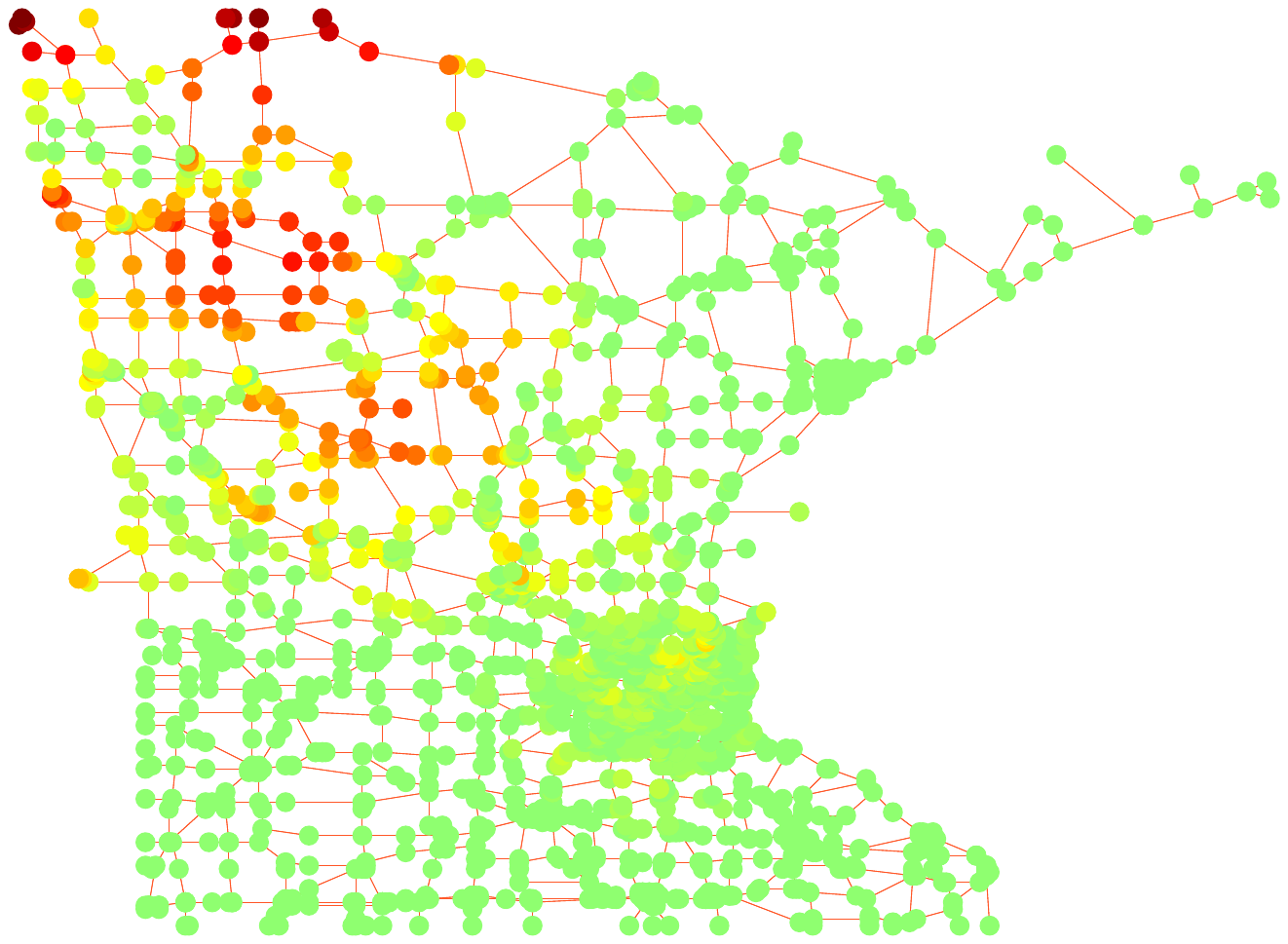}} 
\end{minipage}
\hfill
\begin{minipage}[b]{.23\linewidth}
\centerline{$|\ip{f}{T_i g_3}|~~~~$}
\centerline{\includegraphics[width=\linewidth]{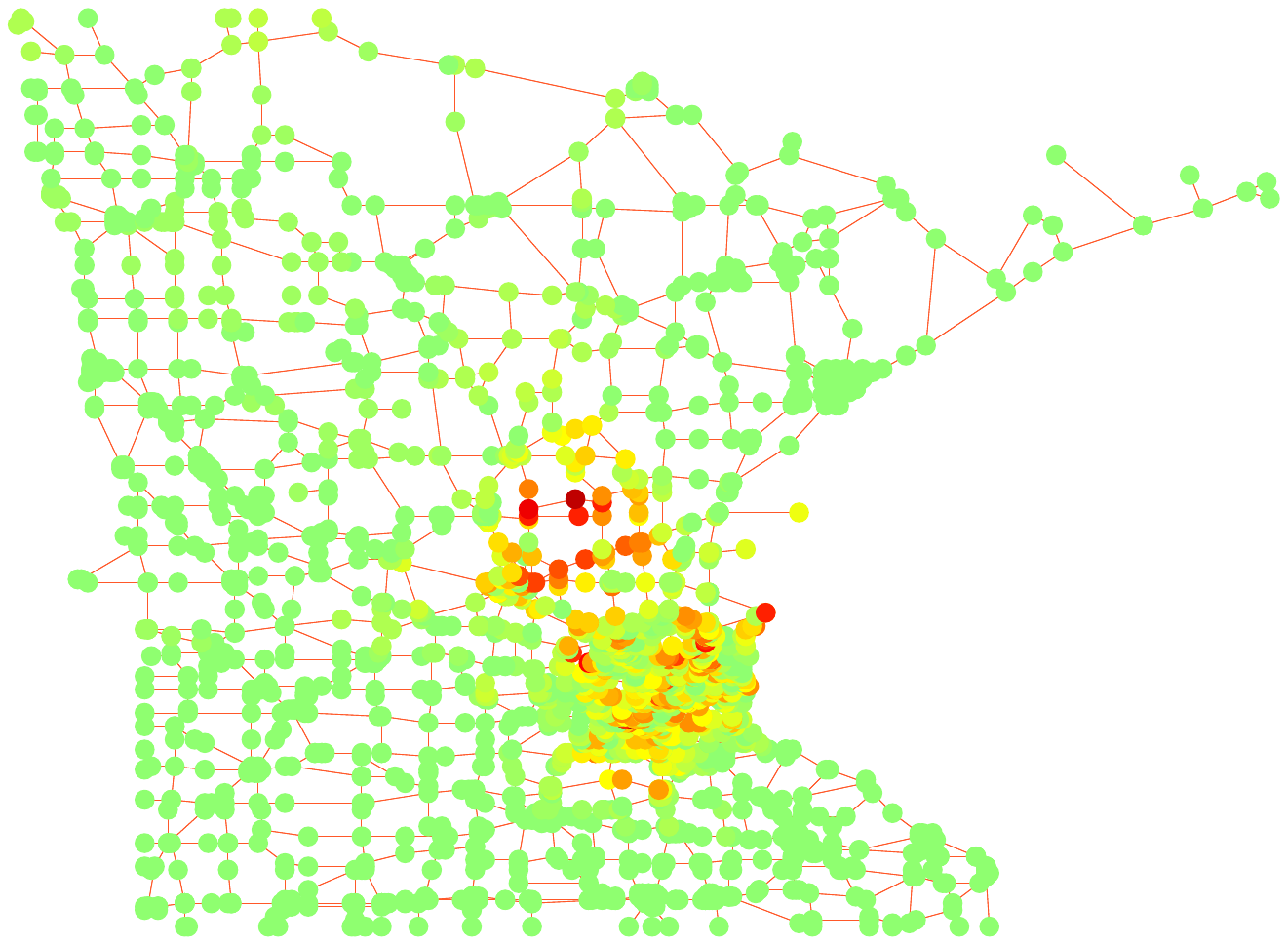}} 
\end{minipage}
\hfill
\begin{minipage}[b]{.23\linewidth}
\centerline{$|\ip{f}{T_i g_4}|~~~~$}
\centerline{\includegraphics[width=\linewidth]{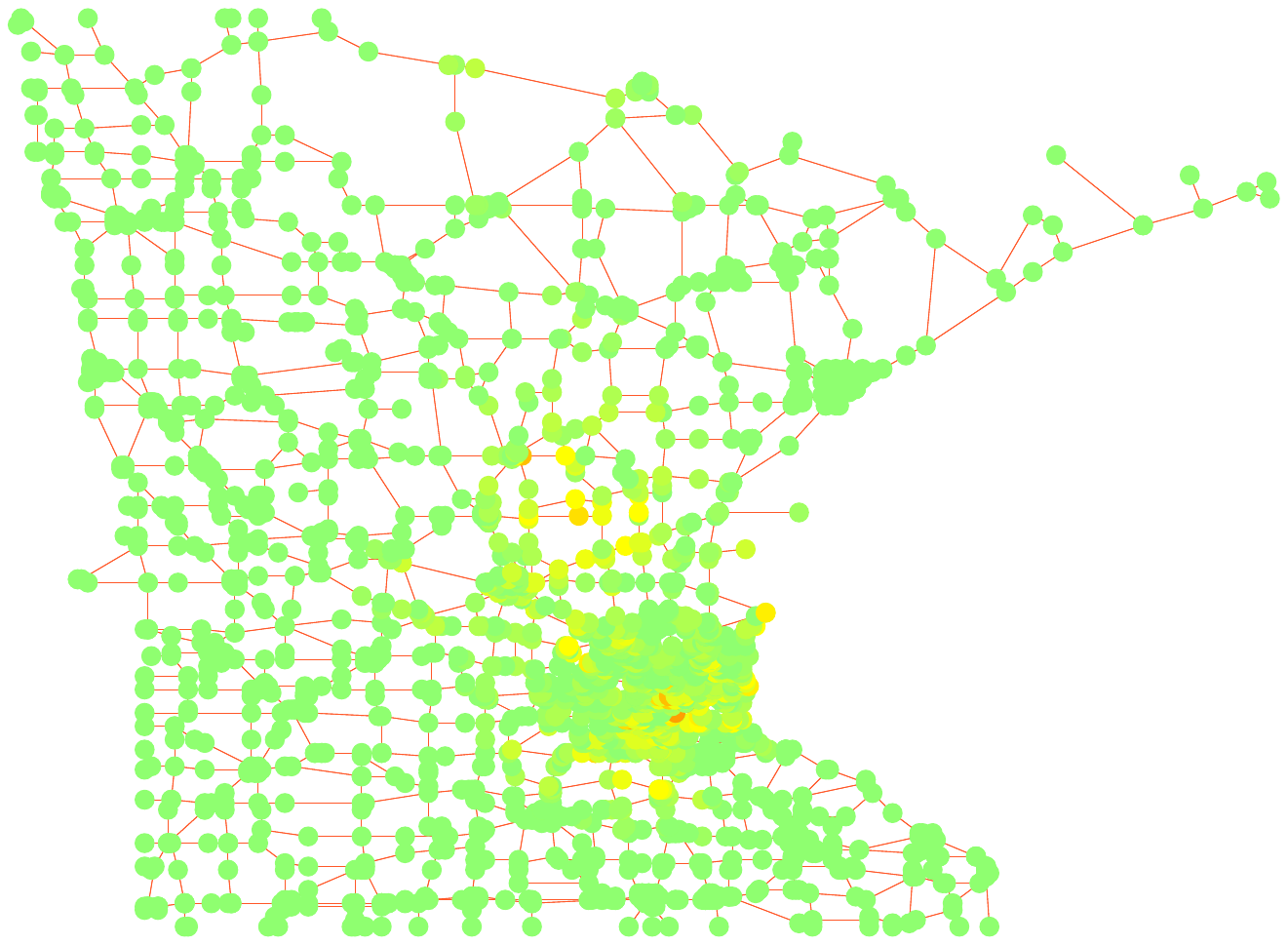}} 
\end{minipage} \\
\vspace{.2in}
\centering
\begin{minipage}[b]{.23\linewidth}
\centerline{$|\ip{f}{T_i g_{8}}|~~~~$}
\centerline{\includegraphics[width=\linewidth]{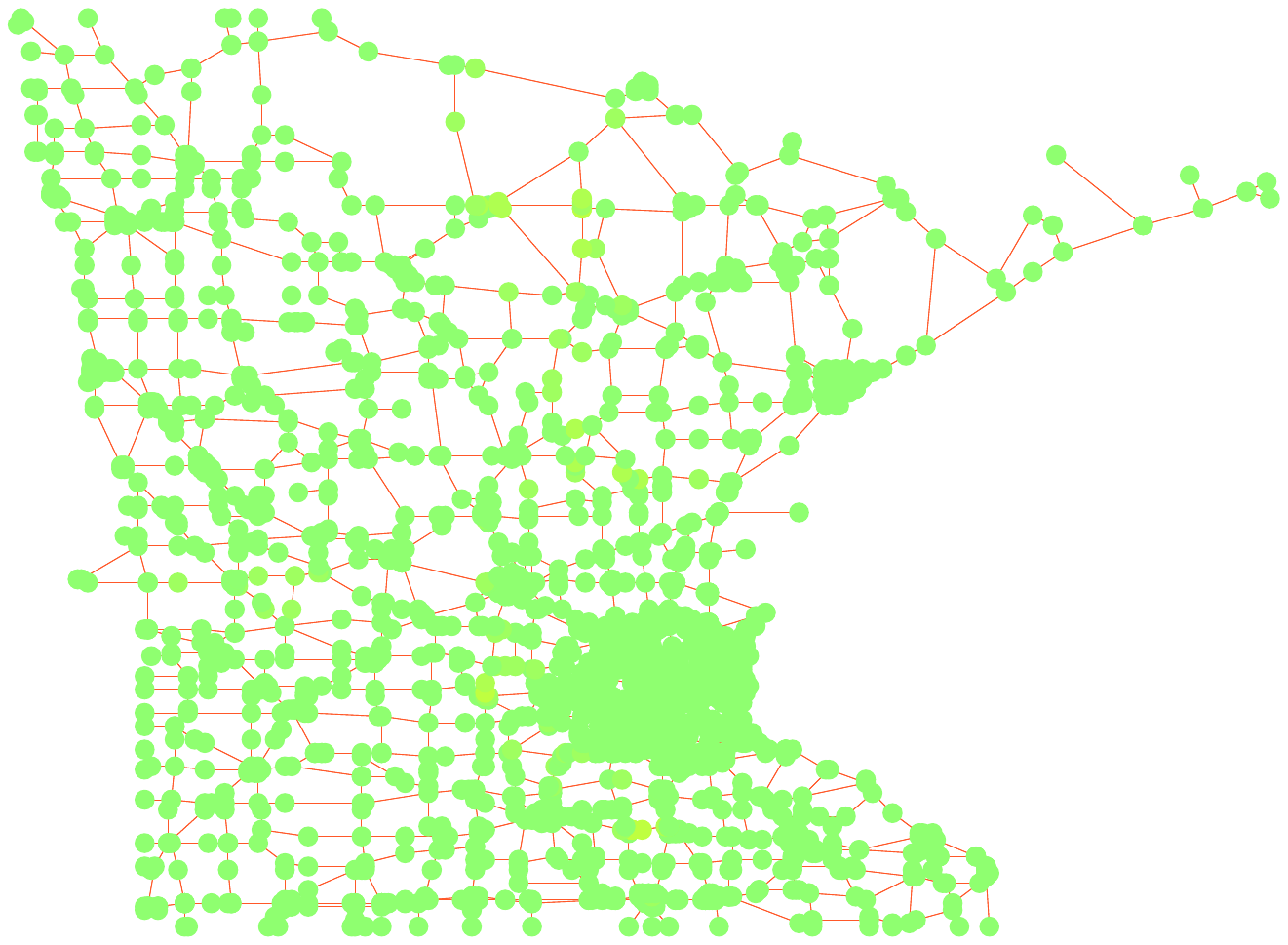}} 
\end{minipage}
\hfill
\begin{minipage}[b]{.23\linewidth}
\centerline{$|\ip{f}{T_i g_{9}}|~~~~$}
\centerline{\includegraphics[width=\linewidth]{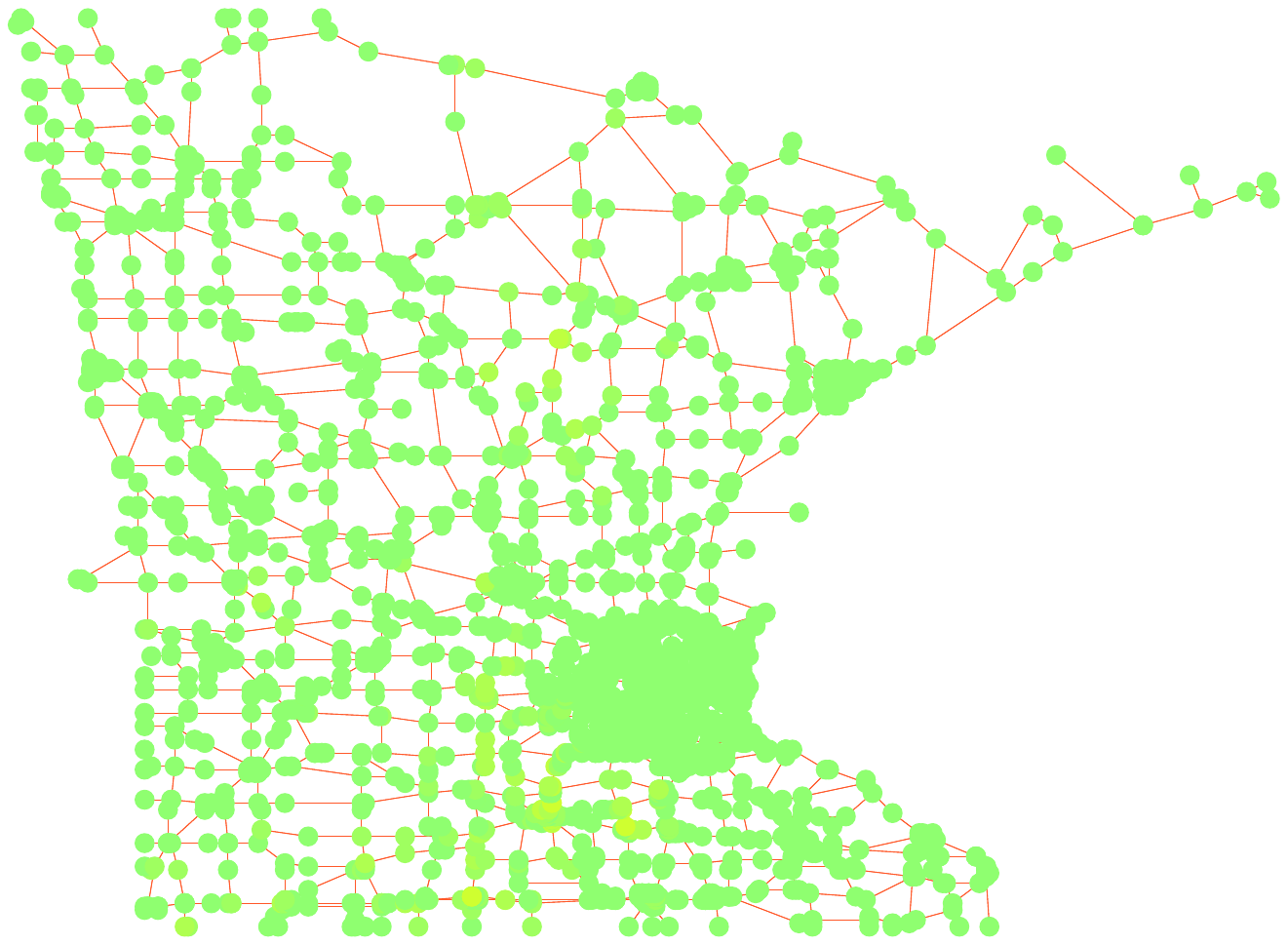}} 
\end{minipage}
\hfill
\begin{minipage}[b]{.23\linewidth}
\centerline{$|\ip{f}{T_i g_{10}}|~~~~$}
\centerline{\includegraphics[width=\linewidth]{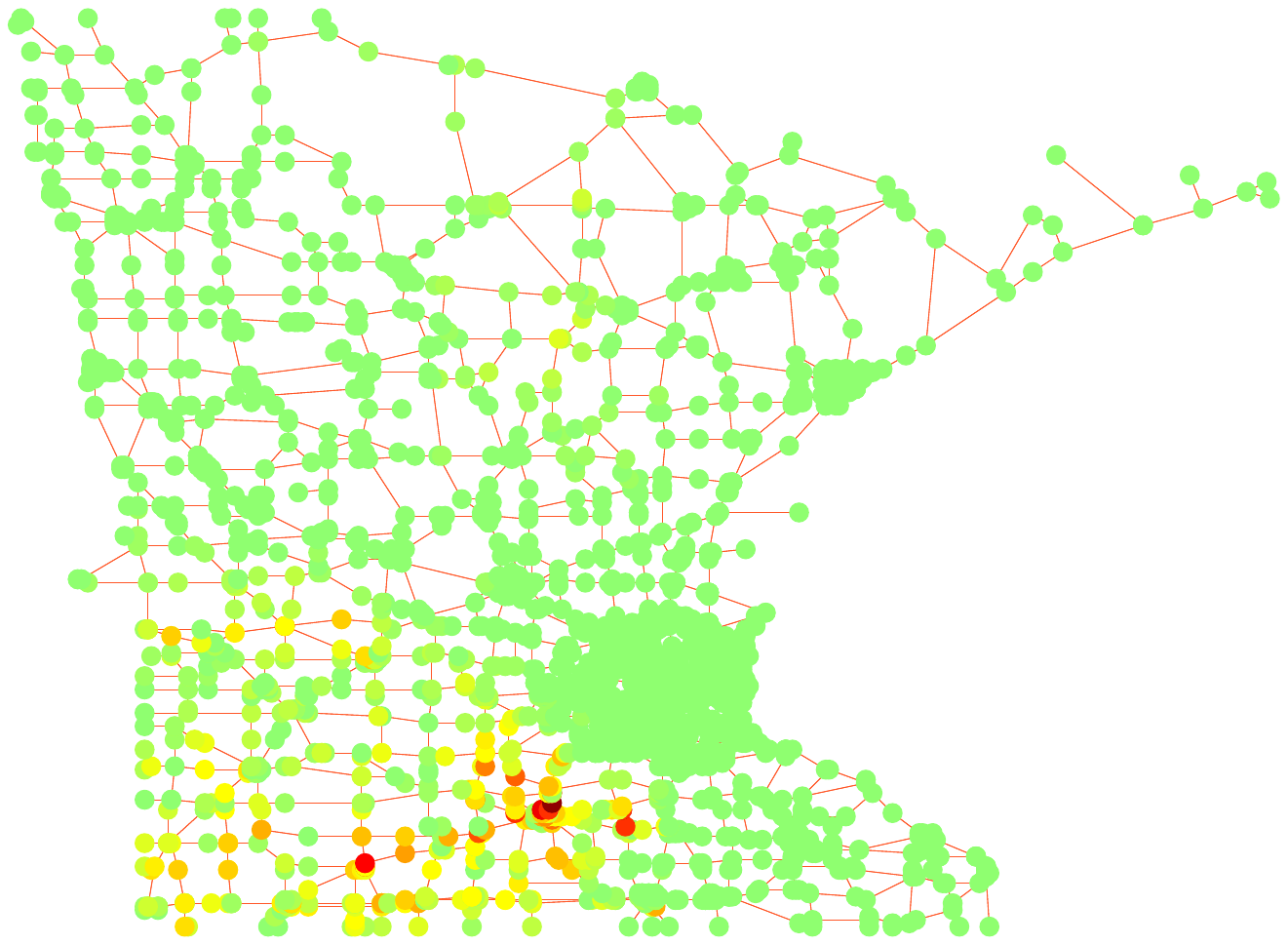}} 
\end{minipage}
\hfill
\begin{minipage}[b]{.23\linewidth}
\centerline{$|\ip{f}{T_i g_{11}}|~~~~$}
\centerline{\includegraphics[width=\linewidth]{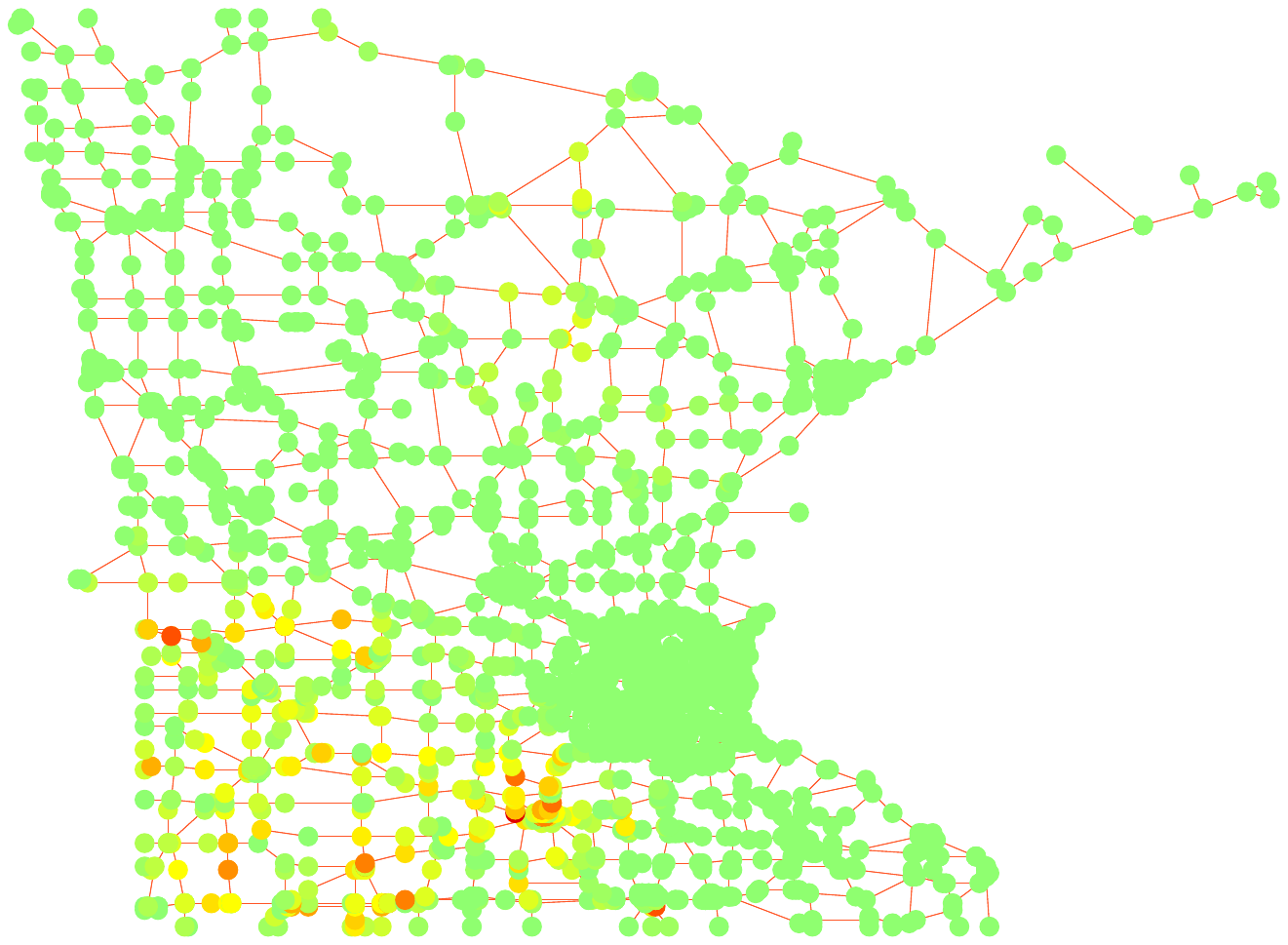}} 
\end{minipage} \\
\vspace{.2in}
\centering
\begin{minipage}[b]{.23\linewidth}
\centerline{$|\ip{f}{T_i g_{12}}|~~~~$}
\centerline{\includegraphics[width=\linewidth]{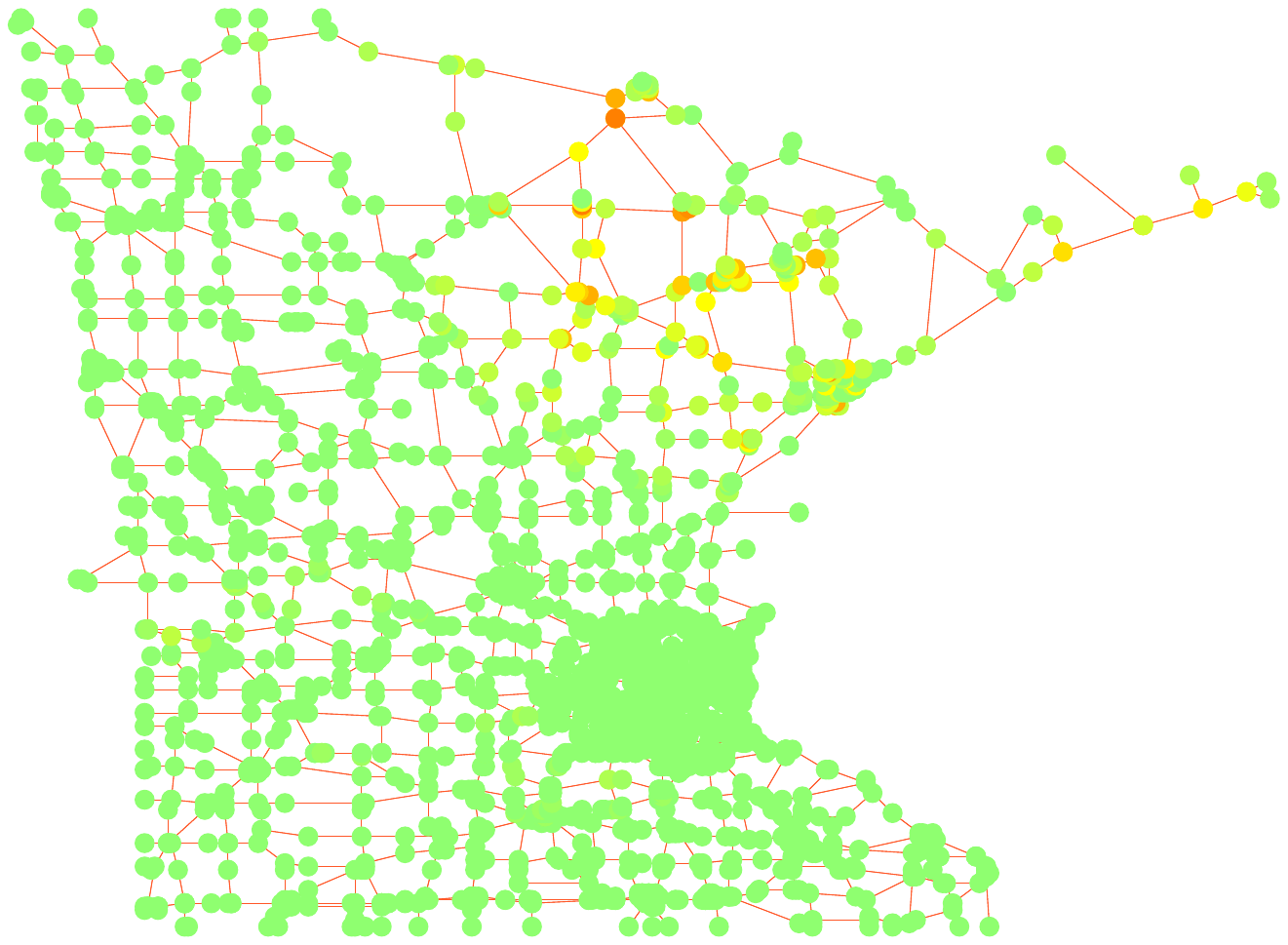}} 
\end{minipage}
\hfill
\begin{minipage}[b]{.23\linewidth}
\centerline{$|\ip{f}{T_i g_{13}}|~~~~$}
\centerline{\includegraphics[width=\linewidth]{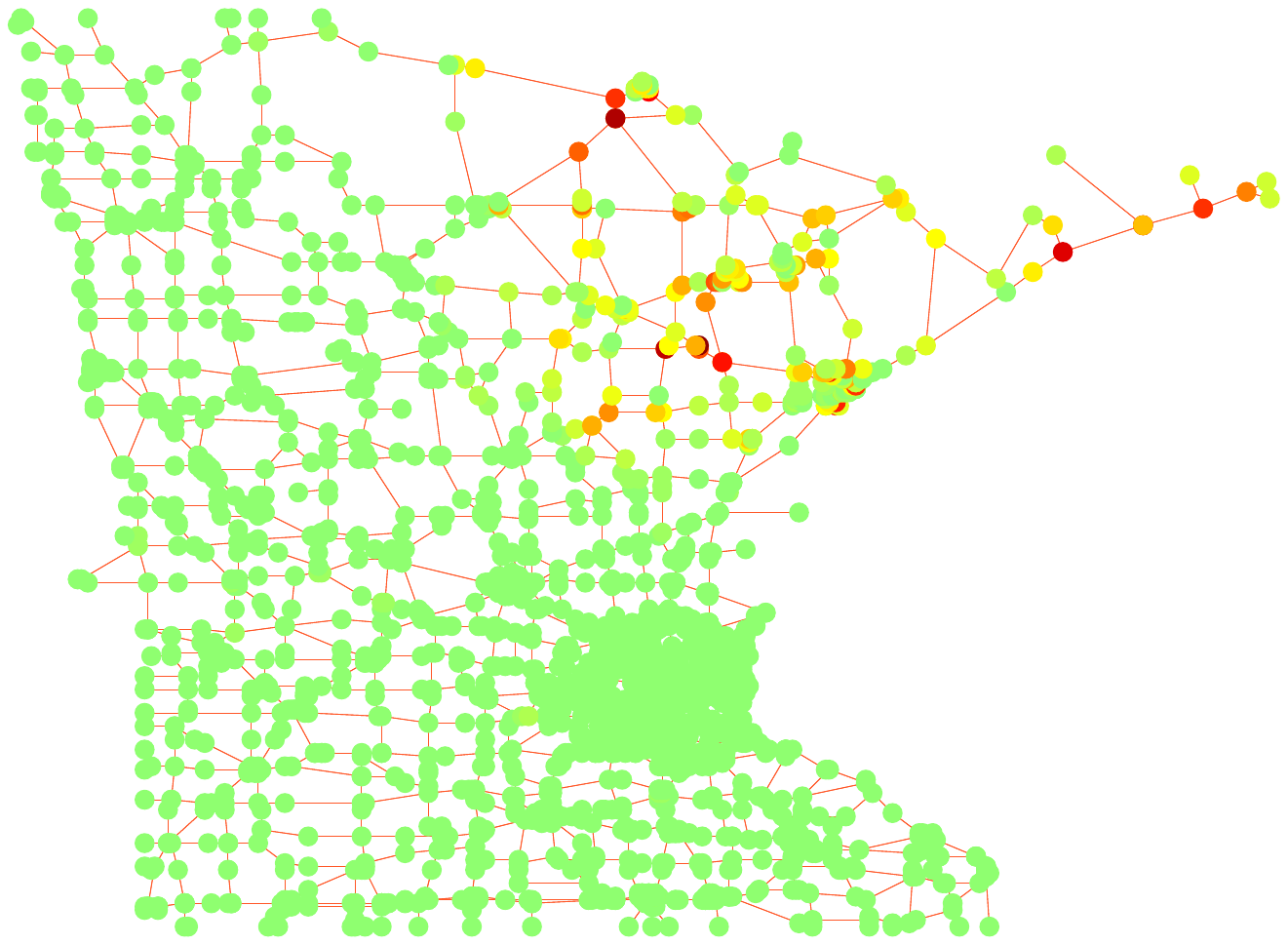}} 
\end{minipage}
\hfill
\begin{minipage}[b]{.23\linewidth}
\centerline{$|\ip{f}{T_i g_{14}}|~~~~$}
\centerline{\includegraphics[width=\linewidth]{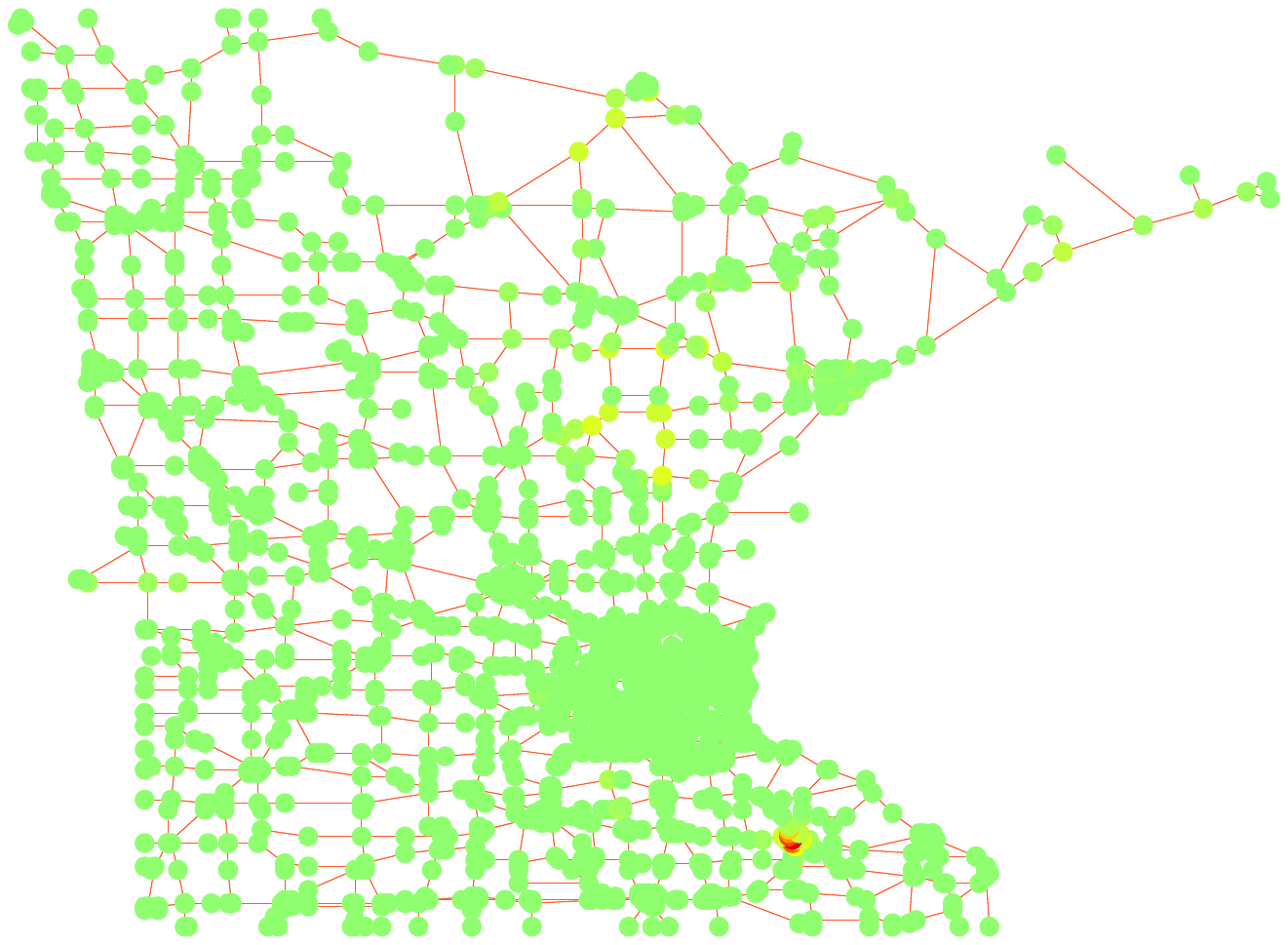}} 
\end{minipage}
\hfill
\begin{minipage}[b]{.23\linewidth}
\centerline{$|\ip{f}{T_i g_{15}}|~~~~$}
\centerline{\includegraphics[width=\linewidth]{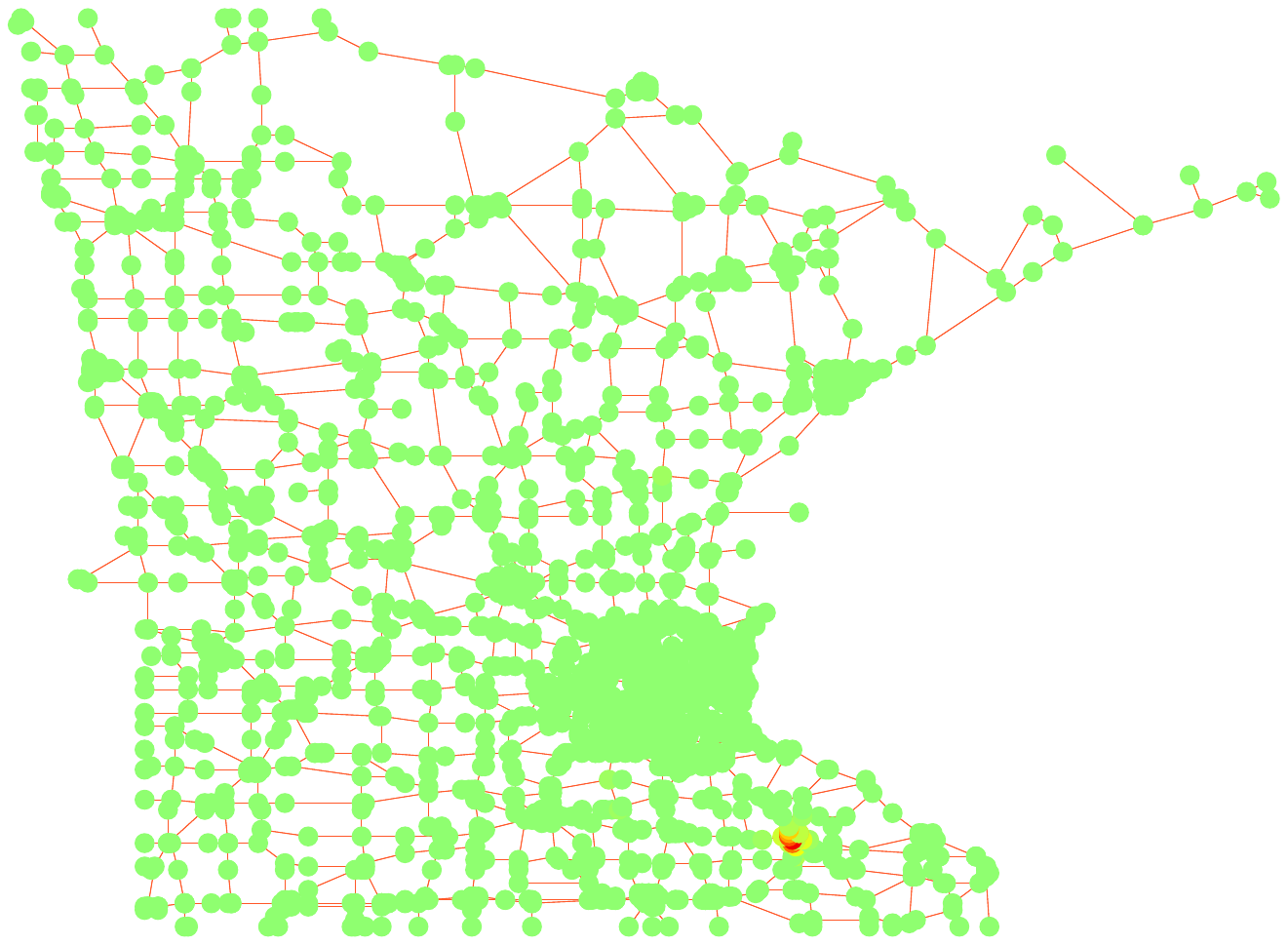}} 
\end{minipage} 
\caption {Vertex-frequency analysis of the signal $f$ from Figure \ref{Fig:minn1}(b) using atoms generated from the system of warped filters in Figure \ref{Fig:minn1}(e). Each plot contains the $N$ coefficients resulting from a single graph spectral filter. We have omitted the coefficients arising from the filters indexed by $m=5,6,7,8$ since they are nearly zero. This frequency-lapse sequence shows us which frequency components are present in the signal in which parts of the graph. For example, the lower left portion of the graph has larger coefficients for those atoms generated by filters indexed by $m=10,11$, corresponding to eigenvectors associated with eigenvalues roughly between 3 and 4. The black cluster is the most difficult to make out from the coefficients, because some of the eigenvectors associated with larger eigenvalues are more localized.} 
 \label{Fig:minn2}
\end{figure}
\end{example}

The WGFT may be a more natural generalization of classical time-frequency analysis, 
but
there are a number of practical advantages of performing vertex-frequency analysis with the method proposed in this paper rather than with the WGFT. First, the proposed method is considerably more efficient from a computational standpoint as it does not require the full eigendecomposition of the graph Laplacian. Second, because the filters lead to a tight frame of vertex-frequency atoms, the squared magnitudes of the transform coefficients can indeed be interpreted as an energy density function, which is true with the classical spectrogram, but not with the spectrogram arising from the WGFT since windowed graph Fourier frames are not generally tight. Third, by choosing the number of filters to be significantly smaller than the number of vertices, we reduce the redundancy of the transform.

\section{Conclusion}
We have presented new methods to generate tight frames of atoms to represent signals residing on weighted graphs. Our primary approach is to construct spectral graph filters by warping systems of uniform translates, and then 
generate the dictionary atoms by translating these filters to each vertex in the graph. The main benefits of this construction are (i) by choosing the uniform translates from a certain family (c.f. Theorem \ref{Th:uniform_translates}), we are able to guarantee that the resulting frames are tight; (ii) the resulting frames are computationally efficient to implement, as they do not require a full eigendecomposition of the graph Laplacian; and (iii) the warping function enables us to adapt the spectral graph filters to the specific distribution of Laplacian eigenvalues, rather than just the length of the spectrum, which leads to dictionary atoms with better ability to discriminate between different graph signals. As examples of spectrum-adapted graph frames, we used an approximation of the cumulative spectral density function as the warping function to generate tight vertex-frequency frames, and a composition of that warping function with a logarithmic warping function to generate tight spectrum-adapted graph wavelet frames. One line of ongoing work is the investigation of different methods to approximate the cumulative spectral density function for extremely large graphs.

\section{Appendix}

\begin{IEEEproof}[Proof of Lemma \ref{Le:frame}]
\begin{align}
\sum_{i=1}^N \sum_{m=1}^{M} \left|\ip{f}{g_{i,m}}\right|^2
&=\sum_{i=1}^N \sum_{m=1}^{M} \left|\ip{\hat{f}}{\widehat{T_i g_m}}\right|^2 \label{Eq:frame1} \\
&=\sum_{i=1}^N \sum_{m=1}^{M} \left(\sum_{\l=0}^{N-1}\hat{f}(\lambda_{\l})\sqrt{N}\widehat{g_m}^*(\lambda_{\l})u_{\l}(i)\right)\left(\sum_{\l^{\prime}=0}^{N-1}\hat{f}(\lambda_{\l^{\prime}})\sqrt{N}\widehat{g_m}^*(\lambda_{\l^{\prime}})u_{\l^{\prime}}(i)\right)^* \nonumber \\
&=N\sum_{\l=0}^{N-1}|\hat{f}(\lambda_{\l})|^2 \sum_{m=1}^{M}|\widehat{g_m}(\lambda_{\l})|^2 \label{Eq:frame2} \\
&=N\sum_{\l=0}^{N-1}|\hat{f}(\lambda_{\l})|^2 G(\lambda_{\l}), \nonumber
\end{align}
where \eqref{Eq:frame1} follows from Parseval's Relation, and \eqref{Eq:frame2} follows from the fact that $\sum_{i=1}^N u_{\l}(i)u_{\l^{\prime}}^*(i)=\delta_{\l,\l^{\prime}}$, by the orthonormal nature of the eigenvectors. Applying Parseval's Relation a second time yields the desired result.
\end{IEEEproof}

\begin{IEEEproof}[Proof of Theorem \ref{Th:uniform_translates}]
Let $t\in \Rbb$ be arbitrary. Then
  \begin{align}\label{Eq:th_pr_1}
    \sum_{m\in {\mathbb Z}} \left|q\left(t-\frac{m}{R}\right)\right|^2 &= 
    \sum_{m\in {\mathbb Z}} \left|\sum_{k=0}^K a_k \cos\Biggl(2\pi k \Bigl(t-\frac{m}{R}-\frac{1}{2}\Bigr)\Biggr) \Identity_{\{\frac{m}{R} \leq t < 1+\frac{m}{R}\}}\right|^2 \nonumber \\
    &=   \sum_{m=\lfloor Rt-(R-1) \rfloor}^{\lfloor Rt \rfloor} \left|\sum_{k=0}^K a_k \cos\Biggl(2\pi k \Bigl(t-\frac{m}{R}-\frac{1}{2}\Bigr)\Biggr) \right|^2 \nonumber \\
    &=\sum_{k=0}^K \sum_{j=0}^K a_k a_j \sum_{m=\lfloor Rt-(R-1) \rfloor}^{\lfloor Rt \rfloor} \cos\Biggl(2\pi k \Bigl(t-\frac{m}{R}-\frac{1}{2}\Bigr)\Biggr) \cos\Biggl(2\pi j \Bigl(t-\frac{m}{R}-\frac{1}{2}\Bigr)\Biggr).
        \end{align}
        \vspace{-.75cm}
        
    \noindent   Defining $z:=2\pi k\Bigl(t-\frac{1}{2}\Bigr)$ and $y:=2\pi j\Bigl(t-\frac{1}{2}\Bigr)$, and the inner terms of  \eqref{Eq:th_pr_1} as 
       \begin{align*}
       A_{k,j}:=\sum_{m=\lfloor Rt-(R-1) \rfloor}^{\lfloor Rt \rfloor} \cos\Biggl(z-\frac{2\pi k m}{R}\Biggr) \cos\Biggl(y-\frac{2\pi j m}{R}\Biggr),~~ 0\leq k,j \leq K,
       \end{align*} 
       and expanding the cosines into complex exponentials, 
       we have
\begin{align}
A_{k,j}&=\frac{1}{4}\sum_{m=\lfloor Rt-(R-1) \rfloor}^{\lfloor Rt \rfloor}
\left\{
\begin{array}{l}
\biggl[ \exp\left(iz + 2\pi i \frac{km}{R}\right) + \exp\left(-iz - 2\pi i \frac{km}{R}\right)\biggr] \\
\cdot  \biggl[ \exp\left(iy + 2\pi i \frac{jm}{R}\right) + \exp\left(-iy - 2\pi i \frac{jm}{R}\right) \biggr]
 \end{array}
\right\} \nonumber \\
&=\frac{1}{4}~\sum_{m=0}^{R-1}
\left\{
\begin{array}{l}
\biggl[ \exp\left(iz + 2\pi i \frac{km}{R}\right) + \exp\left(-iz - 2\pi i \frac{km}{R}\right)\biggr] \\
\cdot  \biggl[ \exp\left(iy + 2\pi i \frac{jm}{R}\right) + \exp\left(-iy - 2\pi i \frac{jm}{R}\right) \biggr]
 \end{array}
\right\} \nonumber \\
&= \frac{1}{4}\sum_{m=0}^{R-1} \exp\left(iz + 2\pi i \frac{km}{R}\right)\exp\left(iy + 2\pi i \frac{jm}{R}\right) \label{11} \\
	&\quad+\frac{1}{4}\sum_{m=0}^{R-1} \exp\left(-iz - 2\pi i \frac{km}{R}\right)\exp\left(iy + 2\pi i \frac{jm}{R}\right)\label{22} \\
	&\quad+\frac{1}{4}\sum_{m=0}^{R-1} \exp\left(iz + 2\pi i \frac{km}{R}\right)\exp\left(-iy - 2\pi i \frac{jm}{R}\right) \label{33}\\
	&\quad+\frac{1}{4}\sum_{m=0}^{R-1} \exp\left(-iz - 2\pi i \frac{km}{R}\right)\exp\left(-iy + 2\pi i \frac{jm}{R}\right). \label{44} 
\end{align}
We now use the fact that if $\xi\neq 1$ is any $R^{th}$ root of unity, then 
  \begin{align*}
    \sum_{m=0}^{R-1} \xi^m =0.
  \end{align*}
  Since $K< R/2$, $k+j<R$ for all $0 \leq k,j \leq K$, and therefore 
  \begin{align*}
    \begin{split}
      \eqref{11} &= 
      \begin{cases}
	\frac{R}{4}\exp(iz+iy), \quad \text{if $k=j=0$}\\
	0, \quad \text{otherwise}
      \end{cases}, \\
      \eqref{22}=\eqref{33} &=
      \begin{cases}
	\frac{R}{4}, \quad\text{if $k=j$} \\
	0, \quad \text{otherwise}
      \end{cases},\\
      \hbox{and } \eqref{44} &= 
      \begin{cases}
	\frac{R}{4}\exp(-iz-iy), \quad \text{if $k=j=0$}\\
	0, \quad \text{otherwise}
      \end{cases}. 
  \end{split}
  \end{align*}
  Since for $k=j=0$ we have $y=z=0$, we see that
  \begin{align}\label{Eq:Akj}
    A_{k,j}=
    \begin{cases}
      R, \quad \text{if $k=j=0$}\\
      \frac{R}{2}, \quad \text{if $k=j\neq0$}\\
      0, \quad\text{otherwise}
    \end{cases}.
  \end{align}
Finally, substituting \eqref{Eq:Akj} back into \eqref{Eq:th_pr_1} yields
\begin{align*}
\sum_{m\in {\mathbb Z}} \left|q\left(t-\frac{m}{R}\right)\right|^2 = \sum_{k=0}^K \sum_{j=0}^K a_k a_j A_{k,j} = R a_0^2 + \frac{R}{2} \sum_{k=1}^K a_k^2.
\end{align*}
\end{IEEEproof}

\begin{IEEEproof}[Proof of Corollary \ref{Co:undilated_translates}]
Letting $\hat{h}(y)=q(y)$,
 it follows immediately from Theorem \ref{Th:uniform_translates} that 
\begin{align*}
\sum_{m \in {\mathbb Z}}\left[\hat{h}\left(y-\frac{m}{R}\right)\right]^2=
Ra_0^2+\frac{R}{2}\sum_{k=1}^K a_k^2,
~~\forall y\in \Rbb.
\end{align*}
Moreover, for  $y \in \left[0,\frac{M+1-R}{R}\right]$, $\hat{h}\left(y-\frac{m}{R}\right)=0$ if 
$m < 1-R$ 
or $m > M+1-R$.   
For $m=M+1-R$,
$\hat{h}\left(y-\frac{m}{R}\right)=0$ for all $y \in \left[0,\frac{M+1-R}{R}\right)$, and, due to \eqref{Eq:a_constraint}, for $y=\frac{M+1-R}{R}$, 
\begin{align*}
\hat{h}\left(y-\frac{m}{R}\right)=\hat{h}(0)=\sum_{k=0}^K (-1)^k a_k=0. 
\end{align*}
\end{IEEEproof}

\bibliography{bib_adapted}
\bibliographystyle{IEEETran}
\end{document}